\theoremstyle{plain}
\newenvironment{proof*}{\paragraph{Proof}}{\hfill$\blacksquare$}
\newtheorem{remark}{Remark}
\newtheorem{theorem}{Theorem}
\newtheorem{lemma}{Lemma}
\newtheorem{proposition}{Proposition}
\newtheorem{definition}{Definition}
\newtheorem{example}{Example}
\newcommand{\norm}[1]{\left\lVert#1\right\rVert}
\def\cI{\mathbbm{1}}
\begin{document}

\begin{frontmatter}
\title{Sample Splitting and Weak Assumption Inference For Time Series}

\runauthor{R.Lunde}
\runtitle{Sample Splitting For Time Series}

\begin{aug}
\author{\fnms{Robert} \snm{Lunde}\ead[label=e1]{rlunde@utexas.edu}}


\affiliation{The University of Texas at Austin}

\address{Department of Statistics and Data Sciences\\
The University of Texas at Austin\\
Austin, TX 78712 \\
\printead{e1}\\
\phantom{E-mail:\ }}

\end{aug}

\begin{abstract}

  We consider the problem of inference after model selection under weak assumptions in the time series setting. Even when the data are not independent, we show that sample splitting remains asymptotically valid as long as the process satisfies appropriate weak dependence conditions and the functional of interest is suitably well-behaved.  In addition, if the inference targets are appropriately defined, we demonstrate that valid statistical inference is possible without assuming stationarity.  As a working example, we consider post-selection inference for regression coefficients under a random design assumption, in which the pair $(Y_i, X_i) \in \mathbb{R}^{p_n}$ is assumed to be an observation from a weakly dependent triangular array. We establish (asymptotic) sample splitting validity for regression coefficients under both $\beta$-mixing and $\tau$-dependence assumptions.   

  To facilitate statistical inference in the non-stationary, weakly dependent regime, we extend a central limit theorem of \citet{doukan-winteberger-invariance-principle}.  To extend their result, we derive some properties of the variance of a normalized sum of a weakly dependent process. In particular, we show that, under very general conditions, the variance is often well-approximated by independent blocks.  Using this result, we derive the validity of the block multiplier bootstrap under $\theta$-dependence and demonstrate the validity of an inference procedure that combines sample splitting with the bootstrap under weak assumptions.      
\end{abstract}


\begin{keyword}
\kwd{weak dependence}
\kwd{sample splitting}
\kwd{central limit theorem}
\kwd{non-stationarity}
\kwd{bootstrap}
\end{keyword}

\end{frontmatter}

\section{Introduction}
\label{Introduction}
We consider the problem of performing inference after model selection in a weak-assumption setting for time series data. Post-selection inference is currently a very active area of research with a large body of literature.  We will discuss some of the literature most closely related to our work in Section \ref{related-work}.  For a more comprehensive literature review, we refer the reader to \citet{Dezuere-high-dim-inference-review} or \citet{bootstrapping-sample-splitting-assumption-free}.   

Sample splitting is an old method for inference after model selection. The procedure is very simple, and \citet{bootstrapping-sample-splitting-assumption-free} show that sample splitting achieves honest coverage in the independent, low assumption setting.  However, one common criticism of sample splitting is that it requires the selection and inference sets to be independent, which is not the case for time series data. 

In this paper, we study the theoretical properties of a sample splitting procedure for time series in which the data are divided into two contiguous blocks, where the first half is used for selection and the second half for inference.  One of the main ideas of our paper is that, although the datasets are not independent, sample splitting remains \textit{asymptotically valid} for a wide range of functionals under appropriate weak-dependence conditions.  To demonstrate the applicability of our results, as a running example, we consider post-selection inference for regression coefficients under a random design assumption, where the pair $(Y_i, X_i) \in \mathbb{R}^{p_n}$ is assumed to be an observation from a weakly dependent triangular array.  We also study the properties of a block multiplier procedure on the inference set, and establish its validity under weak assumptions on the data generating process.    

\subsection{Problem Setup}
Let $\{Y_{n,1}, \ldots, Y_{n, 2n}\}$ be  a triangular array of random vectors taking values in $\mathbb{R}^{p_n}$. We will allow $p_n$ to be an arbitrary nondecreasing sequence of natural numbers with some caveats to be explained later, but we will focus on low-dimensional asymptotics for a selected model.  For ease of exposition, we will suppress the triangular array notation when appropriate and write $Y_1, Y_2, \ldots, Y_{2n}$. The main example we have in mind is regression, where $Y_{n,i,1}$ is the response and $(Y_{n,i,2}, \ldots Y_{n,i,p})$ are the covariates. 

For concreteness, suppose that $\mathcal{D}_1 = (Y_1, \ldots Y_n)$ is used for model selection and $\mathcal{D}_2 = (Y_{n+1}, \ldots , Y_{2n} )$ is used for inference.  Let $\mathcal{M}$ represent a collection of possible models.  For each $m \in \mathcal{M}$, let $\hat{\theta}_m (Y_{n+1}^{2n}) : \mathcal{Y}^{n \times p^{(m)}} \mapsto \Theta_{m}$ denote an estimator formed from the second half of the data for the model $m$, where $\Theta_{m}$ is a Polish space that may depend on the value of $m$.  Let $p^{(m)}$ denote the dimension of the selected model, which is typically much smaller than $n$. For example, in a standard regression problem, $\hat{\theta}_m (Y_{n+1}^{2n})$ represents a subset of regression coefficients estimated from some sub-model $m$. 

The goal of many approaches to post-selection inference is to provide valid inference for a target parameter given that a particular model was chosen, while remaining agnostic about whether the model is ``correct''.  We will also adopt this perspective in this paper.  When $Y_1, \ldots, Y_{2n}$ are independent, it is very straightforward to see that sample splitting achieves this goal.  First note that, with sample splitting, we hold out $\mathcal{D}_1$ from the estimation step and treat it as fixed; see \citet{fithian-optimal-inference-after-selection} for a similar treatment. Suppose that the model selection procedure $\hat{m}(Y_1^n)$ is  $\sigma(\mathcal{D}_1)$-measurable  and let $\hat{\theta}_{\hat{m}} (Y_{n+1}^{2n})$ denote an estimator formed on the second half of the data for the random model $\hat{m}(Y_1^n)$.  The resulting distribution of the estimator is given by:
\begin{align}
\mathcal{L}\bigl(\hat{\theta}_{\hat{m}} (Y_{n+1}^{2n}) \ \bigr| \ \mathcal{D}_1 \bigr) = \mathcal{L}\bigl(\hat{\theta}_{m} (Y_{n+1}^{2n})\bigr) 
\end{align}   
In other words, sample splitting works in the independent case because the conditional distribution is equal to an unconditional distribution on $\mathcal{D}_2$, which is often very easy to work with in practice.  However, when $Y_1, \ldots, Y_{2n}$ are dependent, it is generally the case that:
\begin{align}
\mathcal{L}\bigl(\hat{\theta}_{\hat{m}} (Y_{n+1}^{2n}) \ | \ \mathcal{D}_1 \bigr) \neq \mathcal{L}\bigl(\hat{\theta}_{m} (Y_{n+1}^{2n})\bigr) 
\end{align}   
Therefore, just from basic properties of conditional probability, we cannot say whether treating $\mathcal{D}_1$ as fixed leads to valid inference; there may be spillover effects from $\mathcal{D}_1$ that invalidate inference.  If the data are weakly dependent, however, it will often be the case that the two distributions will be close asymptotically under an appropriate notion of distance.

We will now define a notion of sample-splitting validity; one of the major aims of the rest of this paper is to establish this form of validity under general conditions on dependence and the estimator.  Since sample splitting is valid in the independent case, our notion of validity will be tied to the conditional distribution approaching a distribution in which the inference and selection sets are independent.  To this end, let $\widetilde{Y}_{n+1}^{2n}$ represent an independent copy of $Y_{n+1}^{2n}$ that is also independent of $Y_{1}^{n}$.  The notion we adopt is as follows:
\begin{definition}[Sample Splitting Validity]
Sample splitting is said to be asymptotically valid in probability under the metric $d$ if:
\footnote{Although this level of generality is not needed in the body of the paper, we would like to note that, when the dimension of the functional $p^{(m)}$ varies across models, it is understood that $d$ is a ``meta-metric'' in the following sense.  Let $\mathcal{P}^{{(m)}}$ denote the space of Borel measures defined on the metric space $(\mathcal{X}^{(m)},\rho^{(m)})$.  Then, $d: \cup_{m \in \mathcal{M}} \mathcal{P}^{(m)} \times \mathcal{P}^{(m)} \mapsto [0,\infty]$, where $d(\cdot, \cdot)$ agrees with a metric on $\mathcal{P}^{{(m)}}$ for each $m$.} 
\begin{align}
\label{sample splitting validity in probability}
d\bigl(\mathcal{L}\bigl(\hat{\theta}_{\hat{m}}(Y_{n+1}^{2n}) \ | \ Y_1^n\bigr), \ \mathcal{L}\bigl(\hat{\theta}_{\hat{m}}(\widetilde{Y}_{n+1}^{2n}) \ | \  Y_1^n\bigr)\bigr) \xrightarrow{P} 0   
\end{align}
\end{definition}

A requirement for $d$ to be useful in our context is that, when $\{X_n \}_{n \geq 1}$ is a sequence of random variables taking values in $\mathbb{R}^p$, we have that:
\begin{align}
d\bigl(\mathcal{L}(X_n),\ \mathcal{L}(X)  \bigr) \rightarrow 0 \implies  X_n \rightsquigarrow X 
\end{align}  
That is, convergence in the metric implies convergence in distribution.  For technical reasons that will be expounded upon later,  we will use the bounded Lipschitz metric, denoted $d_{BL}$, which is known to metricize weak convergence for Borel measures on separable spaces.  See Definition \ref{bounded-lipschitz-definition} for a definition of $d_{BL}$.     

Although we will be working with a metric that metricizes weak convergence, we will not require that the estimator converges in distribution.  It may be the case that a conditional distribution approaches an unconditional distribution even if the latter fails to converge in distribution.  For details, see Section \ref{general-asymptotic}.     

Another assumption that will be avoided is stationarity.  Strict stationarity guarantees that the future will resemble the past. On the surface, this condition seems vital to statistical inference.  Without strict stationarity, there may not be a fixed parameter corresponding to the long-run variance, for example. However, strict stationarity is a strong assumption that is rarely met in practice.  While assuming weak stationarity would be enough to ensure that certain parameters, such as the long-run variance, exist, this is still an assumption that is not to be taken lightly. 

Since stationarity assumptions rarely hold, we believe that it is preferable to avoid them altogether at the cost of a loss in interpretability.  Here, we will consider parameters of the form $\theta_{\hat{m},n}$, which are random and depend on $n$, with no guarantee that they will converge even when the model is fixed.  For a concrete example in the regression setting, see Section  \ref{regression-setting}.  We will look to construct confidence intervals that have the property: 
\begin{align}
\label{confidence-statement}
\liminf_{n \rightarrow \infty} P \left(\theta_{\hat{m},n} \in C_{\hat{m},n} \right) \geq 1 - \alpha
\end{align}  
where $C_{\hat{m},n}$ is a confidence set that depends on the chosen model $\hat{m}(Y_1^n)$ and $Y_{n+1}^{2n}$.  The probability statement is over the joint distribution of $Y_1^{2n}$.  Since our notion of sample splitting validity only holds in probability, we will not be able to guarantee pathwise validity in which the probability statement is with respect to $\mathcal{L}(Y_{n+1}^{2n} \ | \ Y_1^{n})$.  Instead, what we have above is an on-average statement that nevertheless should be strong enough for statistical applications.  We will not consider honest confidence intervals (e.g. \citet{li-honest-confidence}) in the low dimensional setting and will leave a treatment of this topic to future work.  

We would also like to take a moment to discuss the interpretation of the probability statement in (\ref{confidence-statement}).  This statement differs from typical confidence intervals in the sense that the parameter changes with $n$ and depends on the data through the selected model. However, it still permits a frequentist interpretation.  Consider a large number of (independent) runs of the process.  For large $n$, the proportion of runs for which $\theta_{\hat{m},n}$ is contained in the confidence interval will be approximately $1-\alpha$. This confidence statement is retrospective and is not concerned with future values of the process. 

One may doubt the wisdom of sample splitting when the data are non-stationary.  While it may be the case that a model selected on the first half of the data will be a poor choice for the second half of the data under non-stationarity, even in these cases, sample splitting will often remain asymptotically valid in the sense that associated confidence intervals will have asymptotic $1-\alpha$ coverage as defined in (\ref{confidence-statement}).  We would like to emphasize that validity of inference is not tied to the fitness of the chosen model. In addition, we are not advocating for sample splitting as a universal inference tool for non-stationary problems.  Rather, inferences based on sample splitting are robust to departures from stationarity under appropriate conditions on dependence.     

We would also like to note that, while demonstrating (\ref{sample splitting validity in probability}) establishes sample splitting as an asymptotically valid procedure, it says nothing about how to conduct inference on $\mathcal{D}_2$.  Here there are many possibilities, but we will mainly consider the case where confidence intervals are constructed using the block multiplier bootstrap, similar to \citet{bootstrapping-sample-splitting-assumption-free}.  We will treat sample splitting together with the bootstrap as a single procedure and show that this procedure is also robust to departures from stationarity and remains valid even under dependence conditions that are weaker than mixing. A key step towards establishing validity of the block multiplier bootstrap is proving a non-stationary central limit theorem in this regime.

We are not the first to propose a resampling procedure robust to non-stationarity.  For example, \citet{nonstationary-subsampling} establish the validity of subsampling under $\alpha$-mixing and some additional regularity conditions.  Our results are under a weaker notion of dependence and does not require the existence of some fixed parameter.  While subsampling is typically valid for a wider range of functionals than the bootstrap, the bootstrap has the advantage of generalizing to growing dimensions, which we will examine in future work. 

\subsection{Notions of Weak Dependence} 
We will mainly consider two forms of dependence: $\beta$-mixing, due to \citet{Volkonskii-rozanov-introduce-beta-mixing} and $\mathlarger{\tau}$-dependence, introduced by \citet{tau-dependence}. For unconditional results, we will also consider $\theta$-weak dependence, which we will see is weaker than $\tau$-dependence. Since the dimension of the process is allowed to grow, a finite-dimensional notion of dependence will be appropriate. While such notions are less common in the literature, they have been considered in the case of $\beta$-mixing in various contexts \citep{agrawal-duchi-generalization} \citep{cck-testing-moment-inequalities} \citep{mcdonald-shalizi-schervish-estimating-beta-mixing}.

\subsubsection{$\beta$-Mixing}
We will start by introducing the notion of $\beta$-dependence between two $\sigma$-fields.  The $\beta$-mixing coefficient will then be constructed by considering appropriate $\sigma$-fields related to the random vector $Y_1^{2n}$.   
\begin{definition}[$\beta$-dependence between $\sigma$-fields]
 Let $(\Omega, \mathcal{F}, P)$ be a probability space and for any $\mathcal{A}, \mathcal{B}, \subseteq \mathcal{F}$ define:
\begin{align}
\label{beta-sigma-field}
\begin{split}
\beta(\mathcal{A}, \mathcal{B}) = \frac{1}{2} \sup & \biggl\{ \ \sum_{i=1}^I \sum_{j=1}^J \left| P(A_i \cap B_j) - P(A_i) P(B_j)  \right| \biggl. : 
  \\  &  \{A_i\} \text{ is any finite partition of } \Omega \text{ in } \mathcal{A} 
  \\  & \biggl. \{B_j\} \text{ is any finite partition of } \Omega \text{ in } \mathcal{B}   \  \biggr\}
  \end{split}  
\end{align}
\end{definition}      


Now, define the corresponding $\beta$-mixing coefficient for the vector $Y_1^{2n}$ as follows:

\begin{definition}[$\beta$-mixing coefficient]
Define the $r$th mixing coefficient for $Y_1^{2n}$ by:
\begin{align}
\label{ beta-mixing-definition}
\beta_r (Y_1^{2n}) =\max_{1 \leq l \leq 2n-r} \beta( \sigma(Y_1, \ldots Y_l),  \sigma(Y_{l+r}, \ldots Y_{2n}))
\end{align}
\end{definition}

 The $\beta$-mixing coefficient also has the following representation as an $L_1$ distance between a conditional distribution and an unconditional distribution. This representation, coupled with Markov Inequality, facilitates the approximation of a conditional distribution with an unconditional one.      

\begin{align}
\label{ beta-mixing-definition}
\beta_r (Y_1^{2n})  =\max_{1 \leq l \leq 2n-r} \norm{ \ d_{TV}( \mathcal{L}(Y_{l+r}^{2n} \ | \ Y_1^{l}), \mathcal{L}(Y_{l+r}^{2n})) \ }_1
\end{align}

In addition, the following preservation property will be useful:

\begin{proposition}[Preservation of $\beta$-mixing coefficient] Let $f$ be a measurable function of $(Y_{n+r}, \ldots Y_{2n})$.  Then,
\begin{align}
\beta\bigl(\sigma\bigl(Y_1^n\bigr), \ \sigma\bigl(f(Y_{n+r+1}^{2n})\bigr)\bigr) \leq \beta_r(Y_1^{2n}) 
\end{align}
\end{proposition}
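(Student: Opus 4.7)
The proof should be a short, routine consequence of the definition of $\beta$-dependence between $\sigma$-fields together with a monotonicity property. The plan is as follows.

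First, I would establish the following monotonicity lemma, implicitly: if $\mathcal{A}' \subseteq \mathcal{A}$ and $\mathcal{B}' \subseteq \mathcal{B}$ are sub-$\sigma$-fields of $\mathcal{F}$, then $\beta(\mathcal{A}', \mathcal{B}') \leq \beta(\mathcal{A}, \mathcal{B})$. This is immediate from the definition in (\ref{beta-sigma-field}): every finite measurable partition of $\Omega$ using sets from $\mathcal{A}'$ is also a finite partition using sets from $\mathcal{A}$ (analogously for $\mathcal{B}'$), so the supremum defining $\beta(\mathcal{A}', \mathcal{B}')$ is taken over a smaller family and is therefore no larger.

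Next, since $f$ is a measurable function of $(Y_{n+r+1}, \ldots, Y_{2n})$, the $\sigma$-field it generates satisfies
\begin{align}
\sigma\bigl(f(Y_{n+r+1}^{2n})\bigr) \subseteq \sigma\bigl(Y_{n+r+1}^{2n}\bigr) \subseteq \sigma\bigl(Y_{n+r}^{2n}\bigr).
\end{align}
Applying the monotonicity observation with $\mathcal{A} = \mathcal{A}' = \sigma(Y_1^n)$, $\mathcal{B} = \sigma(Y_{n+r}^{2n})$, and $\mathcal{B}' = \sigma(f(Y_{n+r+1}^{2n}))$ yields
\begin{align}
\beta\bigl(\sigma(Y_1^n),\ \sigma(f(Y_{n+r+1}^{2n}))\bigr) \leq \beta\bigl(\sigma(Y_1^n),\ \sigma(Y_{n+r}^{2n})\bigr).
\end{align}

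Finally, I would conclude by plugging $l = n$ into the definition of $\beta_r(Y_1^{2n})$, which gives
\begin{align}
\beta\bigl(\sigma(Y_1^n),\ \sigma(Y_{n+r}^{2n})\bigr) \leq \max_{1 \leq l \leq 2n-r} \beta\bigl(\sigma(Y_1^l),\ \sigma(Y_{l+r}^{2n})\bigr) = \beta_r(Y_1^{2n}),
\end{align}
and chaining the two inequalities gives the claim. There is no real obstacle here; the only substantive content is the monotonicity of $\beta(\cdot,\cdot)$ under shrinking its arguments, and this is essentially tautological from the supremum-over-partitions definition. The statement should be viewed as setting up a convenient consequence of the basic definitions that will be used later when transferring mixing bounds from the raw process to derived quantities such as estimators or test statistics.
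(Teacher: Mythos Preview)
Your argument is correct and is exactly the standard one: monotonicity of $\beta(\cdot,\cdot)$ in each of its $\sigma$-field arguments, followed by plugging $l=n$ into the definition of $\beta_r(Y_1^{2n})$. The paper itself does not supply a proof for this proposition---it is stated as a basic preservation property and used later without further justification---so there is nothing to compare against beyond noting that your proof is the natural one any reader would fill in.
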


\paragraph{$\tau$-dependence}
\label{tau-dependence}
The $\tau$-coefficient is a measure of dependence introduced by \citet{tau-dependence}. The main (but not the only) difference from $\beta$-mixing is the choice of a different metric between the unconditional and conditional distributions. We will again introduce some preliminary notions before defining the $\mathlarger{\tau}$-coefficient.  

\begin{definition}[K-Lipschitz]
A map $f$ between metric spaces $(\mathcal{X}, \rho)$ and $(\mathcal{X}^\prime, \rho')$ is said to be $K$-Lipschitz if for all $x,y \in \mathcal{X}$,
\begin{align}
\label{lipschitz-definition}
\rho^\prime(f(x),f(y)) \leq K \rho(x,y) 
\end{align} 
The Lipschitz constant of $f$, denoted $||f||_L$, is defined as the smallest $K >0$ that satisfies (\ref{lipschitz-definition}).     
\end{definition}
In our problem setting, we will choose the sup-norm $||\cdot ||_\infty$ for both the domain and codomain. 

\begin{definition}[1-Wasserstein Distance]
The 1-Wasserstein distance between $P$ and $Q$, denoted $d_W(P,Q)$, is defined as:
\begin{align}
d_W(P,Q) = \sup_{f \in \Lambda_1} \int f \ d (P-Q) 
\end{align}  
where $\Lambda_1$ is the class of $1$-Lipschitz functions:  
\begin{align}
\Lambda_{1} = \biggl\{ f: \mathcal{X} \mapsto \mathbb{R} \ \bigr| \ ||f||_L  \leq 1  \biggr\} 
\end{align}
\end{definition}
Now we will introduce $\mathlarger{\tau}$-dependence, which is a measure of dependence between a random variable and a $\sigma$-field opposed to two $\sigma$-fields as in the case of $\beta$-dependence.  We will then build the $\mathlarger{\tau}$-coefficient using the notion of $\mathlarger{\tau}$-dependence.  

\begin{definition}[$\mathlarger{\tau}$-dependence]
Let $(\Omega, \mathcal{F}, P)$ be a probability space and let $\mathcal{E}$ be a sub-$\sigma$-field of $\mathcal{F}$.  Further $\mathcal{X}$ be a Polish space.  For any integrable random variable taking values in $\mathcal{X}$, define:
\begin{align}
\mathlarger{\tau}(\mathcal{E}, X) = \norm{  d_W \bigl( \mathcal{L}(X \ | \  \mathcal{E}), \ \mathcal{L}(X)\bigr) }_1
\end{align}
\end{definition}


\begin{definition}[$\mathlarger{\tau}$-coefficient]
Let $\mathcal{V}_{a:b}$ denote the collection of all non-empty ordered subsets of integers $\{a, \ldots, b\}$ for $a < b$.  For an element $v \in \mathcal{V}_{a:b}$, let $v(i)$ denote the $i$th element and let $v(s)$ denote the last element.  Furthermore, let $s_v$ denote the cardinality of the set $v$.  Then,   
\begin{align}
\mathlarger{\tau}_r(Y_1^{2n}) = \max_{1 \leq l \leq 2n-r}  \max_{\ v \in \mathcal{V}_{l+r:2n} } \frac{1}{s_v} \ \mathlarger{\tau} ( \sigma(Y_1,\ldots, Y_l), (Y_{v(1)}, \ldots, Y_{v(s)}))
\end{align}
\end{definition}

We would like to remark that, unlike $\beta$-mixing, the $\tau$-coefficient is normalized by the number of random variables under consideration.  Defined in this way, the $\tau$-coefficient is closely related to the $\theta$-coefficient, which we will introduce shortly. 

Note that $\tau$-coefficient has the following preservation property.  The proof is a consequence of basic properties of the Wasserstein metric.  
\begin{proposition}[Lipschitz Preservation]
Suppose $f(Y_{n+r+1}^{2n})$ is a $K$-Lipschitz function.  Then, 
\begin{align}
\mathlarger{\tau}(\sigma(Y_1^n), f(Y_{n+r+1}^{2n})) \leq K (n-r) \ \mathlarger{\tau}_r(Y_1^{2n})
\end{align} 
\end{proposition}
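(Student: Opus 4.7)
The plan is to split the bound into two independent factors: a Wasserstein contraction coming from the Lipschitz composition, and a direct invocation of the $1/s_v$ normalization built into the definition of $\mathlarger{\tau}_r$.

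First, I would use the Kantorovich--Rubinstein dual representation of $d_W$, which is exactly the definition given in the excerpt. For any $g \in \Lambda_1$, the composition $g \circ f$ satisfies $\|g \circ f\|_L \leq \|g\|_L \, \|f\|_L \leq K$ with respect to the sup-norm geometry, so $g \circ f / K \in \Lambda_1$. Substituting into the dual representation, rescaling by $K$, and taking the supremum over $h = g \circ f / K$ yields the pointwise-in-$\omega$ bound
\begin{align*}
d_W\bigl(\mathcal{L}(f(Y_{n+r+1}^{2n}) \mid \sigma(Y_1^n)),\, \mathcal{L}(f(Y_{n+r+1}^{2n}))\bigr) \leq K \, d_W\bigl(\mathcal{L}(Y_{n+r+1}^{2n} \mid \sigma(Y_1^n)),\, \mathcal{L}(Y_{n+r+1}^{2n})\bigr).
\end{align*}
Taking $L^1$ norms on both sides then gives $\mathlarger{\tau}(\sigma(Y_1^n), f(Y_{n+r+1}^{2n})) \leq K \, \mathlarger{\tau}(\sigma(Y_1^n), Y_{n+r+1}^{2n})$.

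Second, I would specialize the definition of $\mathlarger{\tau}_r(Y_1^{2n})$ to the index $l = n$ and to the particular ordered subset $v = (n+r+1, \ldots, 2n) \in \mathcal{V}_{n+r:2n}$, whose cardinality is $s_v = n - r$. This choice is dominated by the double maximum, so
\begin{align*}
\mathlarger{\tau}_r(Y_1^{2n}) \geq \frac{1}{n-r} \, \mathlarger{\tau}\bigl(\sigma(Y_1^n),\, (Y_{n+r+1}, \ldots, Y_{2n})\bigr),
\end{align*}
which rearranges to $\mathlarger{\tau}(\sigma(Y_1^n), Y_{n+r+1}^{2n}) \leq (n-r) \, \mathlarger{\tau}_r(Y_1^{2n})$. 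Chaining this with the contraction bound from the first step produces the claimed inequality.

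There is no substantive obstacle here; the argument is essentially two lines once the right pieces are assembled. The only points that require a small amount of care are verifying the composition inequality $\|g \circ f\|_L \leq \|g\|_L \, \|f\|_L$ with the chosen sup-norm on both the domain $\mathbb{R}^{p_n(n-r)}$ and the codomain, and recognizing that the apparent blow-up factor $(n-r)$ is purely an artifact of the $1/s_v$ normalization in the definition of $\mathlarger{\tau}_r$ rather than a loss in the argument.
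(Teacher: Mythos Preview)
Your proof is correct and matches the paper's approach: the paper states only that the result ``is a consequence of basic properties of the Wasserstein metric'' without giving details, and your two-step argument (Wasserstein contraction under a $K$-Lipschitz map, then unwinding the $1/s_v$ normalization with $l=n$ and $v=(n+r+1,\ldots,2n)$) is precisely the intended elementary verification.
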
 

\paragraph{$\theta$-dependence}
Another notion of weak dependence that we will consider in this paper is $\theta$-dependence.  $\theta$-dependence is one of several dependence measures that were introduced in the seminal paper of \citet{doukhan-louhichi-new-dependence-measure}, falling under the larger class of $\Psi$-weak dependence measures.  Roughly speaking, these dependence measures are defined as:
\begin{align}
\label{covariance-past-future}
\sup_{f \in \mathcal{F}, \ g \in \mathcal{G}} \text{Cov}(f(`past'), g(`future'))
\end{align}
where different choices of function classes $\mathcal{F}$ and $\mathcal{G}$, among other things, lead to different dependence measures.  As in mixing theory, these coefficients aim to quantify how fast dependence decays as a function of the gap between the past and the future.

Most standard $\Psi$-weak dependence measures take $\mathcal{G}$ to be the class of bounded Lipschitz functions (in the case $\theta$-dependence, boundedness is not assumed for $\mathcal{G}$).   As a result of this choice of function class, the preservation properties of these coefficients are less robust than those of mixing coefficients, posing additional challenges.  Nonetheless, a rich theory has developed around these dependence measures; for an overview, see \citet{doukan-neumann-psi-weak-dependence} or \citet{weak-dependence}.

We will now proceed by defining the $\theta$-coefficient.  We will first state a definition more in the form of $\Psi$-weak dependence measures, and then state an alternative formulation that provides a connection to the $\tau$-coefficient.  In what follows, let $\mathcal{F}_u$ be the space of bounded functions mapping from $\mathcal{Y}^{u}$ to $\mathbb{R}$ and let $\mathcal{G}_v$ be the space of Lipschitz functions mappings from $\mathcal{Y}^v$ to $\mathbb{R}$. 

\begin{definition}[$\theta$-coefficient]Let $\Gamma_n(u,v,r)$ denote the collection of indices that satisfy $1 \leq i_1 i_2, \ldots i_u \leq i_j + r \leq j_1 \leq j_2 \leq j_v \leq 2n$.  Then $\theta_r(Y_1^{2n})$ is given by:
\begin{align}
\theta_r(Y_1^{2n}) = \sup_{u,v} \max_{(i,j) \in \Gamma_n(u,v,r)} \max_{f \in \mathcal{F}_u, g \in \mathcal{G}_v}\frac{\text{Cov}(f(Y_{i_1}, \ldots Y_{i_u} ), \ g(Y_{j_1} ,\ldots ,Y_{j_v}) )}{v \ ||f||_\infty \ ||g||_L}
\end{align} 
\end{definition} 

The $\theta$-coefficient can equivalently be expressed as follows. Analogous to the $\tau$-dependent case, we will first define $\theta$-dependence and then construct the $\theta$-coefficient.

\begin{definition}[$\theta$-dependence]
Let $(\Omega, \mathcal{F}, P)$ be a probability space and let $\mathcal{E}$ be a sub-$\sigma$-field of $\mathcal{F}$.  Further $\mathcal{X}$ be a Polish space.  For any integrable random variable taking values in $\mathcal{X}$, define:
\begin{align}
\theta(\mathcal{E}, X) = \sup_{g \in \Lambda_1 }\norm{ \mathbb{E}\bigl[ g(X) \ | \  \mathcal{E} \bigr] - \mathbb{E}\bigl[g(X)\bigr] }_1
\end{align}
\end{definition}

\begin{definition}[$\theta$-coefficient, alternative form]
Using the same notation for defining the $\tau_r(Y_1^{2n})$ in Section \ref{tau-dependence}, define the $\theta_r(Y_1^{2n})$ coefficient as: 
\begin{align}
\theta_r(Y_1^{2n}) = \max_{\ v \in \mathcal{V}_{l+r:2n} } \frac{1}{s_v} \ \theta( \sigma(Y_1,\ldots, Y_l), (Y_{v(1)}, \ldots, Y_{v(s)}))
\end{align}
\end{definition} 

See \citet{weak-dependence} for a proof of the equivalence of these two definitions.  From this latter definition, we can see that $\theta$-dependence and $\tau$-dependence differ only in the order of the supremum and the $L_1$ norm.  From this fact, we can immediately deduce the following:
\begin{proposition}[Relationship between $\theta$ and $\tau$]
\begin{align}
\theta(\mathcal{E},X) \leq \mathlarger{\tau}(\mathcal{E},X)
\end{align} 
\end{proposition}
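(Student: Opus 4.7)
The plan is to unfold both definitions and reduce the claim to the elementary fact that interchanging a supremum with an expectation can only increase the quantity (supremum inside is at least supremum outside). The text already observes that the sole distinction between $\theta$ and $\mathlarger{\tau}$ is the order in which $\sup_{g \in \Lambda_1}$ and $\norm{\cdot}_1$ are applied, so once the two expressions are aligned the inequality is a one-line consequence.

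First I would use the Kantorovich--Rubinstein dual representation of $d_W$ stated as Definition of the $1$-Wasserstein distance. Applied to the conditional and unconditional laws of $X$, this gives
\begin{align}
d_W\bigl(\mathcal{L}(X \mid \mathcal{E}),\ \mathcal{L}(X)\bigr) = \sup_{f \in \Lambda_1} \bigl(\mathbb{E}[f(X) \mid \mathcal{E}] - \mathbb{E}[f(X)]\bigr).
\end{align}
Since $\Lambda_1$ is closed under negation, this supremum equals $\sup_{f \in \Lambda_1} |\mathbb{E}[f(X)\mid \mathcal{E}] - \mathbb{E}[f(X)]|$. Taking $L_1$ norms yields
\begin{align}
\mathlarger{\tau}(\mathcal{E}, X) = \mathbb{E}\Bigl[\ \sup_{f \in \Lambda_1} \bigl|\mathbb{E}[f(X) \mid \mathcal{E}] - \mathbb{E}[f(X)]\bigr|\ \Bigr].
\end{align}
On the other hand, directly from the definition,
\begin{align}
\theta(\mathcal{E}, X) = \sup_{g \in \Lambda_1} \mathbb{E}\Bigl[\ \bigl|\mathbb{E}[g(X) \mid \mathcal{E}] - \mathbb{E}[g(X)]\bigr|\ \Bigr].
\end{align}

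Now for every fixed $g \in \Lambda_1$ and $\omega$ outside a null set, the integrand in $\theta$ is bounded above pointwise by the integrand in $\mathlarger{\tau}$:
\begin{align}
\bigl|\mathbb{E}[g(X) \mid \mathcal{E}](\omega) - \mathbb{E}[g(X)]\bigr| \leq \sup_{f \in \Lambda_1} \bigl|\mathbb{E}[f(X) \mid \mathcal{E}](\omega) - \mathbb{E}[f(X)]\bigr|.
\end{align}
Taking $\mathbb{E}[\cdot]$ on both sides preserves the inequality, and then taking $\sup_{g \in \Lambda_1}$ on the left yields $\theta(\mathcal{E}, X) \leq \mathlarger{\tau}(\mathcal{E}, X)$. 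The only technical nuisance is the usual measurability issue for $\sup_{f \in \Lambda_1}$ of the conditional-expectation-valued process, which is handled by the separability of $\Lambda_1$ under uniform convergence on compacts (so the sup reduces to a countable one) --- this is standard for Wasserstein duals on Polish spaces and is essentially what makes $d_W(\mathcal{L}(X\mid\mathcal{E}),\mathcal{L}(X))$ a well-defined random variable in the definition of $\mathlarger{\tau}$. No step here is an obstacle; the only thing one must be careful about is citing (or briefly invoking) this measurable-selection fact before writing the pointwise bound.
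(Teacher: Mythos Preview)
Your proposal is correct and follows exactly the approach the paper indicates: the paper simply remarks that $\theta$ and $\mathlarger{\tau}$ differ only in the order of the supremum and the $L_1$ norm, and declares the inequality an immediate consequence without writing out any details. Your argument makes this explicit (and is more careful about the measurability of the inner supremum than the paper bothers to be), but the underlying idea is identical.
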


We will often use $\theta$-dependence to derive results in the unconditional setting.  Aside from being weaker than $\tau$-dependence, $\theta$-dependence has some additional properties that make it preferable to $\tau$-dependence for some problems.  For instance, as mentioned above, $\theta$-dependence can be defined as a covariance between two classes of functions; this property will be crucial in our adaptation of the central limit theorem to the non-stationary setting.  In addition, the preservation properties of $\theta$-dependent sequences extend beyond Lipschitz functions; see Proposition \ref{theta-preservation} for details.

\subsection{Related Work in Post-Selection Inference}
\label{related-work}

Sample splitting has recently been investigated in the IID, low assumption setting by \citet{bootstrapping-sample-splitting-assumption-free}. In this work, the authors establish the validity of inference based on sample splitting uniformly over a large class of probability distributions.  Among other results, the authors establish a growing dimension Delta Method and study the properties of the Normal approximation and bootstrap on the inference set for a regression coefficient in a regime where the dimension is allowed to grow (slowly) with $n$.  In some sense, the present work can be viewed as an exploration of similar themes in the time series setting.  However, dependence introduces several new phenomena and our analysis uses different tools. Other work involving inference after sample splitting include \citet{wasserman-roeder-high-dim-vs} and \citet{meinhausen-stability-selection}.            

Another approach to post-selection inference that has closely related inference goals to ours is uniform inference, first studied in \citet{berk-POSI}.  In this approach, the goal is to establish confidence intervals that hold uniformly over all models in the model class without performing data splitting. While uniform validity is a very satisfying property, procedures that meet this criteria are generally computationally intensive and conservative.  This idea was extended to settings involving heteroskedastic, non-Normal errors with possible misspecification in \citet{Bachoc-valid-ci} and \citet{Bachoc-uniformly-valid}.  More recently, \citet{model-free-linear-regression} and \citet{valid-assumption-lean-regression} consider uniform inference with random design and possible time series dependence.  The notions of dependence considered including mixing and functional dependence measures introduced by \citet{wu-weak-dependence}. These coefficients often yield sharp results for problems related to the central limit theorem (e.g. \citet{jirak-berry-esseen-dependence}), but as noted in \citet{weak-dependence}, are not directly comparable to $\Psi$-dependence measures, as these measures assume that the process is a Bernoulli shift. 

While there are many other approaches in the literature proposed for post-selection inference, we will mention another general approach that has been investigated in a series of papers.  In selective inference, instead of conditioning on $\sigma(\mathcal{D}_1)$, one uses the whole dataset and conditions on a selection event, corresponding to $\hat{m}(Y_1^{2n}) = m$.  This approach was introduced in \citet{lockhart-sig-test-lasso} and various extensions have been studied in \citet{fithian-optimal-inference-after-selection} and \citet{lee-exact-post-selection-inf}, among others. In principle, this general framework is also viable for time series.  When the probability model is an exponential family, \citet{fithian-optimal-inference-after-selection} demonstrate that hypothesis tests formed by conditioning on $\hat{m}(Y_1^{2n}) = m$ are more powerful than those that condition on a finer $\sigma$-field.  While this type of optimality result is encouraging, these benefits come at the cost of stronger assumptions.  All of the proposed procedures in this framework require the covariates to be fixed and further require a geometric characterization of the selection regions, which are procedure-dependent.  It should be noted, however, that \citet{tian-asymptotic-selective} weaken the assumption of Normality to asymptotic Normality by deriving selective central limit theorems for affine selection procedures.


 \section{Main Results}
 \label{main-results}
 \subsection{General Asymptotic Results}
 \label{general-asymptotic}
To establish results without assuming the existence of a limiting distribution, we will use the notion of weakly approaching random variables, introduced by \citet{Belyaev-Sjostedt-de-Luna-weakly-approach}. We will provide a definition below:
\begin{definition}[Weakly Approaching Random Variables] Two sequences of random variables $\{U_n\}_{n \in \mathbb{N}}$ and $\{V_n\}_{n \in \mathbb{N}}$ taking values in $\mathcal{U}$ are said to be weakly approaching if, for all bounded continuous functions we have that:
\begin{align}
\mathbb{E}f(U_n) - \mathbb{E}f(V_n) \rightarrow 0
\end{align}
This type of convergence is denoted as:
\begin{align}
\mathcal{L}(U_n)   {\overset{wa} \iff } \mathcal{L}(V_n)
\end{align}
\end{definition}
Notice that if $V_n = V$ for all $n$, the above definition corresponds to the more standard notion of weak convergence.  Weakly approaching is therefore a generalization of weak convergence. We have an analogous notion for weakly converging in probability:
\begin{definition}[Weakly Approaching Random Variables in Probability] Let $\{U_n, V_n, \mathbb{W}_n \}_{n \in \mathbb{N}}$ be a sequence of random variables defined on the same probability space $(\Omega, \mathcal{F}, P)$, where $U_n, V_n \in \mathcal{U}$ and $\mathbb{W}_n \in \mathcal{W}_n$. Then sequences of regular conditional probability laws corresponding to $\{\mathcal{L}(U_n)\}_{n \in \mathbb{N}}$ and $\{\mathcal{L}(V_n)\}_{n \in \mathbb{N}}$ are said to weakly approach in probability along $\{\mathbb{W}\}_{n \in \mathbb{N}}$ if for all bounded continuous functions we have that: 
\begin{align}
\mathbb{E}f(U_n \ | \ \mathbb{W}_n) - \mathbb{E}f(V_n \ | \ \mathbb{W}_n) \xrightarrow{P} 0
\end{align}
This type of convergence is denoted as:
\begin{align}
\mathcal{L}(U_n \ | \ \mathbb{W}_n)   {\overset{wa(P)} \iff } \mathcal{L}(V_n \ | \ \mathbb{W}_n)
\end{align}
\end{definition}
One may also define weakly approaching almost surely in an analogous fashion, but we do not state it here since we do not consider this mode of convergence in this paper. It turns out that many limit theorems from weak convergence theory have analogs even when there is no limiting distribution and we allow two sequences of random variables to drift together. We will see that the key requirement for many of these theorems is tightness, which is a necessary condition for weak convergence by Prokhorov's Theorem. Recall that tightness is defined as follows:

\begin{definition}[Tightness]
A sequence of random variables $\{X_n\}_{n \geq 1}$ is said to be tight if for any $\epsilon > 0$, there exists a compact set $C_{\epsilon}$ such that, for all $n \geq 1$:
\begin{align}
P(X_n \not\in C_{\epsilon}) \leq \epsilon
\end{align}
\end{definition}
Using these tools, we will focus on the example of post-selection inference for a collection of regression coefficients under a random design assumption.  We succeed in establishing validity under both $\beta$-mixing and $\tau$-dependence conditions using a weakly approaching version of the Delta Method.  The proof for the $\tau$-dependent case is more involved due to the fact that $\tau$-dependence has less robust preservation properties compared to $\beta$-mixing.

At the end of this section, we will also consider the problem of constructing confidence intervals for the regression coefficient under $\tau$-dependence.  

Finally, before stating our main results, we would like to briefly discuss our choice of metric.  In this section, we consider the bounded Lipschitz metric.  Let $||f||_{BL} = ||f||_\infty \vee ||f||_L$.  The definition we adopt is as follows.

\begin{definition}[Bounded Lipschitz Metric] 
\label{bounded-lipschitz-definition}
For two probability measures $P$ and $Q$, the bounded Lipschitz metric, denoted $d_{BL}$ is given by:
\begin{align}
d_{BL}(P,Q) =  \sup_{f \in BL_1} \int f \ d (P-Q)
\end{align}     
where $BL_1$ is the function class:
\begin{align}
BL_1 = \biggl\{ f: \mathcal{X} \mapsto \mathbb{R} \ \bigr| \ ||f||_{BL} \leq 1 \biggr\}
\end{align}
\end{definition}

We have at least two reasons for choosing this particular metric.  As mentioned in the introduction, the Bounded Lipschitz metric metricizes weak convergence. It turns out that metrics that metricize weak convergence have desirable properties even in the weakly approaching setting.  We will state a proposition that establishes this below.
\begin{proposition}
\label{metricizing-weakly-approaching}
 Let $d$ be a metric that metricizes weak convergence.  Assume that $\{V_n\}_{n \in \mathbb{N}}$ is tight.  Then, we have the following:
\begin{align}
\begin{split}
d(\mathcal{L}(U_n),\mathcal{L}(V_n)) \rightarrow 0 \ \  &\text{ if and only if } \ \ \mathcal{L}(U_n)   {\overset{wa} \iff } \mathcal{L}(V_n)  
\end{split}
\end{align}
\end{proposition}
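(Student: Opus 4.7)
The proof should be a subsequence argument that exploits Prokhorov's theorem (via tightness of $\{V_n\}$) to reduce the relation between the two sequences to the familiar equivalence between $d$-convergence and weak convergence to a single limit. In both directions, the heart of the argument is that along any subsequence we can extract a further subsequence along which $V_n$ has a genuine weak limit $V$, and then show that $U_n$ is forced along the same subsequence to have the same limit.

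\emph{Forward direction} ($d \to 0 \Rightarrow wa$). Fix any bounded continuous $f$ and consider an arbitrary subsequence $n_k$. By tightness of $\{V_n\}$ and Prokhorov, extract a further subsequence $n_{k_j}$ with $V_{n_{k_j}} \rightsquigarrow V$ for some Borel probability measure on $\mathcal{U}$. Since $d$ metricizes weak convergence, this gives $d(\mathcal{L}(V_{n_{k_j}}),\mathcal{L}(V)) \to 0$, and combined with the hypothesis $d(\mathcal{L}(U_n),\mathcal{L}(V_n)) \to 0$, the triangle inequality yields $d(\mathcal{L}(U_{n_{k_j}}),\mathcal{L}(V)) \to 0$, so $U_{n_{k_j}} \rightsquigarrow V$ as well. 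Hence both $\mathbb{E}f(U_{n_{k_j}})$ and $\mathbb{E}f(V_{n_{k_j}})$ converge to $\mathbb{E}f(V)$, and their difference tends to zero. Since every subsequence admits such a further subsequence, the full sequence of differences $\mathbb{E}f(U_n)-\mathbb{E}f(V_n)$ tends to zero, which is exactly the weakly approaching property.

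\emph{Reverse direction} ($wa \Rightarrow d \to 0$). Argue by contradiction: if $d(\mathcal{L}(U_n),\mathcal{L}(V_n)) \not\to 0$, pick $\varepsilon > 0$ and a subsequence $n_k$ with $d(\mathcal{L}(U_{n_k}),\mathcal{L}(V_{n_k})) \geq \varepsilon$. Use tightness of $\{V_n\}$ to pass to a further subsequence $n_{k_j}$ along which $V_{n_{k_j}} \rightsquigarrow V$. Apply the weakly approaching hypothesis to each bounded continuous $f$: $\mathbb{E}f(U_{n_{k_j}}) - \mathbb{E}f(V_{n_{k_j}}) \to 0$ and $\mathbb{E}f(V_{n_{k_j}}) \to \mathbb{E}f(V)$ together give $\mathbb{E}f(U_{n_{k_j}}) \to \mathbb{E}f(V)$, i.e.\ $U_{n_{k_j}} \rightsquigarrow V$. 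Since $d$ metricizes weak convergence, both $d(\mathcal{L}(U_{n_{k_j}}),\mathcal{L}(V))$ and $d(\mathcal{L}(V_{n_{k_j}}),\mathcal{L}(V))$ tend to zero, and the triangle inequality contradicts $d(\mathcal{L}(U_{n_{k_j}}),\mathcal{L}(V_{n_{k_j}})) \geq \varepsilon$.

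\textbf{Where the work is.} The arguments are short once one has two ingredients: (i) Prokhorov's theorem applied to the tight sequence $\{V_n\}$, which is why tightness appears as a hypothesis and cannot be dropped, and (ii) the defining property that $d$ metricizes weak convergence, which lets one freely translate between $d(\mathcal{L}(W_n),\mathcal{L}(W)) \to 0$ and $W_n \rightsquigarrow W$. The only mildly delicate point is the ``every subsequence has a sub-subsequence'' device used in the forward direction to upgrade subsequential conclusions to a full-sequence conclusion; I do not anticipate any obstacle beyond carefully invoking tightness to obtain the weak limit $V$ in each case.
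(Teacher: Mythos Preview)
Your proposal is correct. The paper does not supply its own proof of this proposition; it cites \citet{Belyaev-Sjostedt-de-Luna-weakly-approach} (where the result is stated for the L\'evy metric) and remarks that the argument carries over to any metric metricizing weak convergence. Your subsequence-plus-Prokhorov argument is exactly the standard route and is what that reference does, so there is nothing to add.
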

In \citet{Belyaev-Sjostedt-de-Luna-weakly-approach}, this equivalence is stated in terms of the multi-dimensional Levy metric, but an examination of the proof reveals that it remains valid for any metric that metricizes weak convergence.  The equivalence between a metric metricizing weak converging to $0$ and two random variables weakly approaching extends to the conditional case; since the proposition below does not appear in \citet{Belyaev-Sjostedt-de-Luna-weakly-approach}, a proof is provided in Appendix \ref{weakly-approaching-properties}.
\begin{proposition}
\label{metricizing-weakly-approaching-probability}
 Let $d$ be some metric that metricizes weak convergence.  Assume that $\{ V_n\}_{n \in \mathbb{N}}$ is tight.  Then, we have the following:
\begin{align}
\begin{split}
\mathcal{L}(U_n \ | \ \mathbb{W}_n)   {\overset{wa(P)} \iff } \mathcal{L}(V_n \ | \ \mathbb{W}_n) \ \ &\text{ if and only if } \ \  d(\mathcal{L}(U_n \ | \ \mathbb{W}_n),\mathcal{L}(V_n \ | \ \mathbb{W}_n) ) \xrightarrow{P} 0
\end{split}
\end{align}
\end{proposition}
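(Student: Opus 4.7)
The plan is to lift Proposition \ref{metricizing-weakly-approaching} to the conditional setting via the subsequence principle, using that $X_n \xrightarrow{P} 0$ if and only if every subsequence admits a further subsequence along which $X_n \to 0$ almost surely. The unconditional proposition will be applied pathwise to $\mathcal{L}(U_{n_k} \mid \mathbb{W}_{n_k})(\omega)$ and $\mathcal{L}(V_{n_k} \mid \mathbb{W}_{n_k})(\omega)$ for a.e.\ $\omega$, and the two halves of the equivalence follow by essentially the same argument.

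\textbf{Forward direction.} Fix an arbitrary subsequence and take a countable family $\{f_j\}_{j \ge 1}$ of bounded Lipschitz functions that determines weak convergence on $\mathcal{U}$ (available because $\mathcal{U}$ is separable). For each $j$, the hypothesis gives $\E[f_j(U_n) \mid \mathbb{W}_n] - \E[f_j(V_n) \mid \mathbb{W}_n] \xrightarrow{P} 0$, so one extracts a sub-subsequence on which this convergence holds almost surely; a diagonal extraction produces a single sub-subsequence $n_{k_\ell}$ on which the convergence is simultaneous across all $j$. On the resulting full-measure event, the countable determining family certifies that $\mathcal{L}(U_{n_{k_\ell}} \mid \mathbb{W}_{n_{k_\ell}})(\omega) \overset{wa}{\iff} \mathcal{L}(V_{n_{k_\ell}} \mid \mathbb{W}_{n_{k_\ell}})(\omega)$. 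Combined with pathwise tightness of the conditional laws of $V$ along the same subsequence (established below), Proposition \ref{metricizing-weakly-approaching} delivers $d(\mathcal{L}(U_{n_{k_\ell}} \mid \mathbb{W}_{n_{k_\ell}})(\omega), \mathcal{L}(V_{n_{k_\ell}} \mid \mathbb{W}_{n_{k_\ell}})(\omega)) \to 0$ a.s., which closes the subsequence argument.

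\textbf{Reverse direction.} Symmetrically, given any subsequence, extract a further sub-subsequence on which the metric tends to zero almost surely. Applying Proposition \ref{metricizing-weakly-approaching} pathwise then yields almost-sure weak approaching of the conditional laws along the sub-subsequence, and hence $\E[f(U_{n_{k_\ell}}) \mid \mathbb{W}_{n_{k_\ell}}] - \E[f(V_{n_{k_\ell}}) \mid \mathbb{W}_{n_{k_\ell}}] \to 0$ a.s.\ for every bounded continuous $f$, as required.

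\textbf{Pathwise tightness --- the main obstacle.} The crux is upgrading unconditional tightness of $\{V_n\}$ to pathwise tightness of the conditional laws along the chosen subsequence. Unconditional tightness supplies, for each $m \in \N$, a compact $K_m \subseteq \mathcal{U}$ with $P(V_n \notin K_m) \le 2^{-m}$ for all $n$. Consequently $\E[P(V_n \notin K_m \mid \mathbb{W}_n)] \le 2^{-m}$, and Markov gives $P\bigl(P(V_n \notin K_m \mid \mathbb{W}_n) > 2^{-m/2}\bigr) \le 2^{-m/2}$. Thinning the subsequence $n_{k_\ell}$ fast enough so that, for each fixed $m$, the events $\{P(V_{n_{k_\ell}} \notin K_m \mid \mathbb{W}_{n_{k_\ell}}) > 2^{-m/2}\}$ are summable in $\ell$, Borel--Cantelli yields pathwise tightness of $\{\mathcal{L}(V_{n_{k_\ell}} \mid \mathbb{W}_{n_{k_\ell}})(\omega)\}_\ell$ for almost every $\omega$. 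Interleaving this tightness extraction with the diagonal argument for $\{f_j\}$ is the most delicate part of the proof; once it is in place, everything else reduces to separability-based bookkeeping and a direct pathwise invocation of Proposition \ref{metricizing-weakly-approaching}.
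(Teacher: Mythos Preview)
Your approach is essentially the same as the paper's: use the subsequence characterization of convergence in probability, pass to an almost-sure subsequence, and invoke Proposition~\ref{metricizing-weakly-approaching} pathwise. The paper's proof is considerably more terse---it only spells out the direction from $d\xrightarrow{P}0$ to weakly approaching in probability, and implicitly treats the other direction as symmetric.

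Where you go further is in two places the paper glosses over. First, for the forward direction (weakly approaching in probability $\Rightarrow$ $d\xrightarrow{P}0$) you correctly note that one must control uncountably many test functions simultaneously, and you handle this via a countable convergence-determining class plus diagonalization; the paper does not mention this. Second, and more substantively, you identify the pathwise tightness of the conditional laws $\{\mathcal{L}(V_{n_{k_\ell}}\mid\mathbb{W}_{n_{k_\ell}})(\omega)\}_\ell$ as a genuine prerequisite for invoking Proposition~\ref{metricizing-weakly-approaching} $\omega$-by-$\omega$, and you sketch a Borel--Cantelli/thinning argument to secure it. The paper's proof simply applies Proposition~\ref{metricizing-weakly-approaching} pathwise without addressing why the conditional laws are tight along the subsequence. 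Your proposal is therefore correct and, on this point, more complete than the paper's own argument.
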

 Furthermore, the Bounded Lipschitz metric interacts nicely with the metrics used to define our dependence coefficients. It is straightforward to see that for any probability measures $P$ and $Q$:
\begin{align}
 d_{BL}(P,Q) \leq d_W (P,Q) \ \ \  \text{ and } \ \ \  d_{BL}(P,Q) \leq d_{TV} (P,Q)
\end{align}

\subsection{Stability Theorem}

We are now ready to state our first result in this section, the stability theorem. The main idea behind this theorem is that estimators that satisfy an appropriate deletion stability condition will often be close in distribution to an estimator in which those points are actually deleted.  If the process is not too dependent, it will turn out sample splitting will be asymptotically valid for an estimator with observations deleted between the selection and inference sets, which in turn implies validity for the original estimator.  
\begin{theorem}[Stability Theorem]
\label{low-dim-theorem}
Suppose there exists $\mathcal{M}^*$ such that  $|\mathcal{M}^*| < \infty $ and  $P(\hat{m} \in \mathcal{M}^* ) \rightarrow 1$.   For each $m \in \mathcal{M}^*$, assume that there exists $i_n \rightarrow \infty$ such that the following conditions hold:
\begin{enumerate}[label=A\arabic*]
\item \label{tight} $\{\hat{\theta}_{m}(Y_{n+i_n+1}^{2n})\}_{n \geq 1}$ is tight. 
\item \label{stability-low-dim} $|\hat{\theta}_{m}(Y_{n+1}^{2n}) - \hat{\theta}_{m}(Y_{n+i_n+1}^{2n})| = o_P(1)$
\end{enumerate}
In addition, assume one of the following conditions. For each $m \in \mathcal{M}^*$,  
\begin{enumerate}[label=T]
\item \label{tau-condition} $(n-i_n) \cdot \mathlarger{\tau}_{i_n}(Y_1^{2n}) \cdot || \hat{\theta}_{m}(Y_{n+i_n+1}^{2n}) ||_L \rightarrow 0$.
\end{enumerate} 
\begin{enumerate}[label=B] 
\item \label{beta-condition}$\hat{\theta}_{m}(Y_{n+i_n+1}^{2n})$ is measurable and $\beta_{i_n}(Y_1^{2n}) \rightarrow 0$.
\end{enumerate} 
Then, 
%
\begin{align}
d_{BL}(\mathcal{L}(\hat{\theta}_{\hat{m}}(Y_{n+1}^{2n}) \ | \ Y_1^n), \mathcal{L}(\hat{\theta}_{\hat{m}}(\widetilde{Y}_{n+1}^{2n}) \ | \ Y_1^n)) \xrightarrow{P} 0   
\end{align}
\end{theorem}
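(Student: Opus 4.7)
My plan is to argue in three stages: (i) reduce to a fixed model $m$ using $|\cM^*|<\infty$ and $\P(\hat m\in\cM^*)\to 1$; (ii) use the deletion stability \ref{stability-low-dim} to replace $\hat\theta_m(Y_{n+1}^{2n})$ by the gap-inserted estimator $\hat\theta_m(Y_{n+i_n+1}^{2n})$; and (iii) bound the conditional-to-unconditional $d_{BL}$ discrepancy for this gapped estimator using either the $\beta$-mixing preservation property or the Lipschitz $\tau$-preservation property, depending on which of \ref{beta-condition} or \ref{tau-condition} holds.

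For (i), since $\hat m$ is $\sigma(Y_1^n)$-measurable, for any $f\in BL_1$ we have $\E[f(\hat\theta_{\hat m}(Y_{n+1}^{2n}))\mid Y_1^n]=\sum_{m}\cI\{\hat m=m\}\,\E[f(\hat\theta_m(Y_{n+1}^{2n}))\mid Y_1^n]$, and identically for $\widetilde Y$. Using $\|f\|_\infty\le 1$ and splitting at $\{\hat m\in\cM^*\}$, the $d_{BL}$ distance on the left is bounded by $2\cI\{\hat m\notin\cM^*\}+\sum_{m\in\cM^*}D_{m,n}$, where $D_{m,n}$ is the analogous $d_{BL}$ distance with $\hat m$ replaced by a fixed $m$. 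The indicator term is $o_P(1)$ and the sum is finite, so it suffices to show $D_{m,n}\xrightarrow{P} 0$ for each $m\in\cM^*$.

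For (ii), every $f\in BL_1$ satisfies $|f(x)-f(y)|\le\min(2,|x-y|)$. By \ref{stability-low-dim} and dominated convergence, $\E[\min(2,|\hat\theta_m(Y_{n+1}^{2n})-\hat\theta_m(Y_{n+i_n+1}^{2n})|)]\to 0$, and Markov's inequality then gives the conditional version uniformly in $f$; the same bound applies to $\widetilde Y$ by identical distribution. Thus it suffices to prove that $d_{BL}(\mathcal{L}(\hat\theta_m(Y_{n+i_n+1}^{2n})\mid Y_1^n),\mathcal{L}(\hat\theta_m(\widetilde Y_{n+i_n+1}^{2n})\mid Y_1^n))\xrightarrow{P} 0$. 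Because $\widetilde Y_{n+i_n+1}^{2n}$ is independent of $Y_1^n$, the second conditional law equals the unconditional law of $\hat\theta_m(Y_{n+i_n+1}^{2n})$, so the target is really a conditional-vs-unconditional bound.

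For (iii), under \ref{beta-condition} the $\beta$-preservation property yields $\beta(\sigma(Y_1^n),\sigma(\hat\theta_m(Y_{n+i_n+1}^{2n})))\le\beta_{i_n}(Y_1^{2n})$, and the $L_1$-TV representation of $\beta$ combined with $d_{BL}\le d_{TV}$ and Markov gives $d_{BL}\xrightarrow{P}0$. Under \ref{tau-condition}, Lipschitz $\tau$-preservation gives $\E[d_W(\mathcal{L}(\hat\theta_m(Y_{n+i_n+1}^{2n})\mid Y_1^n),\mathcal{L}(\hat\theta_m(Y_{n+i_n+1}^{2n})))]\le\|\hat\theta_m(Y_{n+i_n+1}^{2n})\|_L\,(n-i_n)\,\tau_{i_n}(Y_1^{2n})\to 0$, and $d_{BL}\le d_W$ with Markov finishes. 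The only real subtlety is turning the unconditional $o_P(1)$ stability into a conditional $o_P(1)$ bound that is uniform over $BL_1$; the uniform dominating constant $2$ from $\|f\|_\infty\le 1$ handles this in one Markov-after-DCT step. Tightness \ref{tight} is not needed for the $d_{BL}$ conclusion itself, but becomes relevant when converting the result into a weakly-approaching-in-probability statement via Proposition \ref{metricizing-weakly-approaching-probability}.
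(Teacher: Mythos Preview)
Your proof is correct and takes a genuinely different, more elementary route than the paper. The paper first converts the $d_{BL}$ statement into a weakly-approaching-in-probability statement via Proposition~\ref{metricizing-weakly-approaching-probability}, then handles the perturbation $\hat\theta_m(Y_{n+1}^{2n})-\hat\theta_m(Y_{n+i_n+1}^{2n})$ with the weakly approaching (conditional) Slutsky theorems (Propositions~\ref{slutksy} and~\ref{conditional-slutksy}); those Slutsky results are precisely where tightness \ref{tight} is invoked. You instead stay inside the metric and exploit the elementary bound $|f(x)-f(y)|\le \min(2,|x-y|)$ for $f\in BL_1$, so that one DCT-plus-Markov step controls the perturbation uniformly over $BL_1$ without any tightness assumption. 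Step~(iii) is the same in both proofs. What the paper's approach buys is that the argument lives natively in the weakly approaching framework used downstream (e.g.\ the conditional Delta Method); what your approach buys is a shorter, self-contained argument that exposes \ref{tight} as unnecessary for the $d_{BL}$ conclusion itself, exactly as you note in your last sentence.
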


\begin{proof}

By law of total probability partitioning on $\{\hat{m} \in \mathcal{M}^*\}$:
 \begin{align}
\begin{split}
& \ \ \sum_{m \in \mathcal{M}^*} P \left( d_{BL} \left[ \mathcal{L}(\hat{\theta}_{\hat{m}}(Y_{n+1}^{2n}) \ | \ Y_1^n), \mathcal{L}(\hat{\theta}_{\hat{m}}(\widetilde{Y}_{n+1}^{2n} \ | \ Y_1^n) \right] > \epsilon, \  \hat{m} = m\right)
\\ & \ \ \ \ \ + P \left( d_{BL} \left[ \mathcal{L}(\hat{\theta}_{\hat{m}}(Y_{n+1}^{2n}) \ | \ Y_1^n), \mathcal{L}(\hat{\theta}_{\hat{m}}(\widetilde{Y}_{n+1}^{2n}) \ | \ Y_1^n) \right] > \epsilon, \  \hat{m} \not\in \mathcal{M}^* \right) 
\\ & \leq \sum_{m \in \mathcal{M}^*} P \left\{ d_{BL} \left[ \mathcal{L}(\hat{\theta}_{m}(Y_{n+1}^{2n})\ | \ Y_1^n), \mathcal{L}(\hat{\theta}_{m}(\widetilde{Y}_{n+1}^{2n}) \right] > \epsilon \right\} + P(\hat{m} \not\in \mathcal{M}^* )
\end{split}
 \end{align}

By Proposition \ref{metricizing-weakly-approaching-probability} it suffices to show that, for each $m \in \mathcal{M}^*$: 
\begin{align}
\mathcal{L}(\hat{\theta}_{m}(Y_{n+1}^{2n}) \ | \ Y_1^n)  {\overset{wa(P)} \iff} \mathcal{L}(\hat{\theta}_{m}(Y_{n+1}^{2n}))
\end{align}


Notice that:
\begin{align}
\hat{\theta}_{m}(Y_{n+1}^{2n}) = \hat{\theta}_{m}(Y_{n+i_n +1 }^{2n}) + [\hat{\theta}_{m}(Y_{n+1}^{2n}) -\hat{\theta}_{m}(Y_{n+i_n +1 }^{2n})]
\end{align}
Now, since $\{\hat{\theta}_{m}(Y_{n+i_n+1}^{2n})\}_{n \geq 1}$ is tight by \ref{tight}, and $|\hat{\theta}_{m}(Y_{n+1}^{2n}) - \hat{\theta}_{m}(Y_{n+i_n+1}^{2n})| = o_p(1)$ by \ref{stability-low-dim}, by Weakly Approaching Conditional Slutsky's Theorem (Proposition \ref{conditional-slutksy}), it is sufficient to show that:
\begin{align}
\mathcal{L}(\hat{\theta}_{m}(Y_{n+i_n+1}^{2n}) \ | \ Y_1^n)  {\overset{wa(P)} \iff } \mathcal{L}(\hat{\theta}_{m}(Y_{n+1}^{2n})) 
\end{align}
As an intermediate step, we will show that:
\begin{align}
\label{weakly-approaching-intermediate}
\mathcal{L}(\hat{\theta}_{m}(Y_{n+i_n+1}^{2n}) \ | \ Y_1^n)  {\overset{wa(P)} \iff } \mathcal{L}(\hat{\theta}_{m}(Y_{n+i_n+1}^{2n}))
\end{align}
To show (\ref{weakly-approaching-intermediate}), we will use Markov's inequality in conjunction with the dependence coefficients. Under assumption \ref{tau-condition}, we have that: 
%

 \begin{align}
 \begin{split}
& \ \ \ P \left\{ d_{BL} \left[ \mathcal{L}(\hat{\theta}_{m}(Y_{n+i_n+1}^{2n}) \ | \ Y_1^n), \mathcal{L}(\hat{\theta}_{m}(Y_{n+i_n+1}^{2n}) \right] > \epsilon \right\}
\\ & \leq P \left\{ d_{W} \left[ \mathcal{L}(\hat{\theta}_{m}(Y_{n+i_n+1}^{2n}) \ | \ Y_1^n), \mathcal{L}(\hat{\theta}_{m}(Y_{n+i_n+1}^{2n}) \right] > \epsilon \right\}
\\ & \leq \frac{(n-i_n) \ ||\ \hat{\theta}_{m}(Y_{n+i_n+1}^{2n}) \ ||_L \ \mathlarger{\tau}_{i_n}(Y_1^{2n}) }{\epsilon} \rightarrow 0
\end{split}
 \end{align}
 The $\beta$-mixing case is analogous since $d_{BL} \leq d_{TV}$. The result will follow if we can establish that: 
 \begin{align}
 \label{weakly-approaching-last-step}
  \mathcal{L}(\hat{\theta}_{m}(Y_{n+i_n+1}^{2n}))  {\overset{wa } \iff }  \mathcal{L}(\hat{\theta}_{m}(Y_{n+1}^{2n}))
 \end{align}   
 Again,  express $\hat{\theta}_{m}(Y_{n+1}^{2n})$ as $\hat{\theta}_{m}(Y_{n+i_n+1}^{2n}) + [\hat{\theta}_{m}(Y_{n+1}^{2n}) - \hat{\theta}_{m}(Y_{n + i_n+1}^{2n})] $.  Then by Weakly Approaching Slutsky's Theorem (Proposition \ref{slutksy}) we indeed have (\ref{weakly-approaching-last-step}). The result follows. 
\end{proof}

We will now discuss some of the conditions of theorem.  We will start by discussing the conditions on dependence.  While the stability theorem applies to a wide range of functionals under $\beta$-mixing, the $\tau$-dependent case is more delicate. When the mapping is not Lipschitz, the stability theorem cannot be applied off-the-shelf to $\tau$-dependent processes. However, in Theorem \ref{sub-exponential-sample-splitting} we will show that a truncation argument in the Bounded Lipschitz metric, combined with the Delta Method, can yield sample splitting validity for regression coefficients.

We would also like to to note that, in both the $\beta$-mixing and $\tau$-dependent cases, it is not clear how the dependence coefficients behave when new covariates are included. However, if the new covariates only include lags of the process, then the dependence coefficients can be inferred from that of the underlying process.  

We also assume that the effective number of models is finite, which we believe is reasonable in a fixed-dimension framework.  Condition \ref{tight} is a tightness condition that allows us to use a weakly approaching version of Slutsky's Theorem, which facilitates a short proof of the theorem.  
Condition \ref{stability-low-dim} is the asymptotic deletion stability condition, which is probabilistic rather than deterministic in nature.  We will give a couple of concrete examples below to show that this condition is actually quite weak in practice. 

\begin{example}[Asymptotic Deletion Stability of Sample Mean]
\label{sample-mean-example}
Let $Y_1, \ldots Y_n$ be centered random variables.  Suppose our estimator is given by:
\begin{align}
 \hat{\theta}_m(Y_{n+1}^{2n}) = \frac{1}{\sqrt{n}} \sum_{i=n+1}^{2n} Y_i
\end{align}
Then,
\begin{align}
\begin{split}
|\hat{\theta}_m (Y_{n+1}^{2n})-\hat{\theta}_m(Y_{n+i_n+1}^{2n}) | &= \left| \frac{1}{\sqrt{n}} \sum_{i=n+1}^{2n} Y_i -  \frac{1}{\sqrt{n-i_n}} \sum_{i=n+i_n + 1}^{2n} Y_i \right|
 \\ & \leq \left|\frac{1}{\sqrt{n}}\sum_{i=n+1}^{n+i_n} Y_i  \right| +  \left( \frac{\sqrt{n}+\sqrt{i_n}}{\sqrt{n}} -1 \right)   \left|\frac{1}{\sqrt{n-i_n}} \sum_{i=n+i_n + 1}^{2n} Y_i \right|
\end{split}
\end{align}
Assuming $\frac{1}{\sqrt{n-i_n}} \sum_{i=n+i_n + 1}^{2n} Y_i$ and $\frac{1}{\sqrt{i_n}} \sum_{i=n+1}^{n+i_n} Y_i$ are $O_P(1)$, it is clear that the A2 holds with $i_n = n^{1/2-\delta}$ for any $0 <\delta < 1/2$.  
\end{example}

For the next example, we will provide some sufficient conditions for the normalized maximum to satisfy A2. Let $U_n = \max_{1 \leq i \leq n} X_i$, where $X_i$ are possible dependent random variables taking values in $\mathbb{R}$ and let $a_n, b_n \geq 0$.  When $X_i, \ldots, X_n$ are IID, necessary and sufficient conditions for weak convergence results of the following form are known: 

\begin{align}
\frac{U_n - b_n}{a_n} \rightsquigarrow G
\end{align}
where $G$ is one of three limiting distributions.  In addition, rates for $a_n$ and $b_n$ are known for each of the three cases; see for example \citet{leadbetter-extremes}. In the dependent case, under a weak dependence condition weaker than mixing, $G$ is also one of three limiting distributions; see \citet{leadbetter-extremes-stationary}. Below we will give a general sufficient condition, and prove stability for the special case of stationary, strongly mixing Gaussian processes with $E(X_i) = 0$ and $E(X_i^2) = 1$.       

\begin{example}[Asymptotic Deletion Stability of Sample Maximum] Assume that $Y_{n+1} \ldots, Y_{2n}$ are observations from a strictly stationary stochastic process with mean $0$.  Suppose our estimator is given by:
\begin{align}
 \hat{\theta}_m(Y_{n+1}^{2n}) = \frac{U_{(n+1):2n} - b_n}{a_n} 
\end{align}
where $U_{n+1:2n} = \max_{(n+1) \leq j \leq 2n} \ Y_j$.  Then, it is sufficient to bound: 
\begin{align}
\label{max-stability-bound}
\begin{split}
 & \ \ \  \left| \frac{U_{1:n} - b_n}{a_n} -  \frac{U_{(i+1):n} - b_{n-i}}{a_{n-i}}   \right|
\\ &  \leq \left| \frac{U_{1:n} - b_n}{a_n} - \frac{U_{(i+1):n} - b_n}{a_n} \right| + \left|\frac{a_{n-i}}{a_{n}} - 1 \right| \cdot \left| \frac{U_{(i+1):n} - b_{n-i}}{a_{n-i}} \right| + \left| \frac{b_{n-i} - b_n}{a_n}  \right| 
\\ & \leq \left| \frac{U_{1:i}}{a_n} \right| + \left|\frac{a_{n-i}}{a_{n}} - 1 \right| \cdot \left| \frac{U_{(i+1):n} - b_{n-i}}{a_{n-i}} \right| +  \left| \frac{b_{n-i} - b_n}{a_n}  \right| 
 \end{split}
\end{align}
From the bound above, we see that A2 is satisfied if $i_n \rightarrow \infty $ is chosen such that $U_{1:i} =o_P(a_n)$, $\frac{a_{n-i}}{a_{n}} \rightarrow 1$, and $|b_{n-i} - b_n| = o(a_n)$.    

In the case of a stationary Gaussian process that is strong mixing, zero mean, and unit variance, $\hat{\theta}_m(Y_{n+1}^{2n})$ is known to have a limiting distribution and is therefore tight (see \citet{deo-strong-mixing-gaussian-maximum}) when: 
\begin{align}
a_n = (2 \log n)^{-\frac{1}{2}}, \ \ \ \ \ b_n = (2 \log n)^{-\frac{1}{2}} - \frac{1}{2}(2 \log n)^{-\frac{1}{2}}(\log \log n + \log 4 \pi)   
\end{align}

Note that the derivative of $b_n$ is given by:
\begin{align}
\frac{db}{dn} = \frac{\log(4 \pi \log n))-4}{2n \log^{\frac{3}{2}}n}
\end{align}
Since $\sup_{n \geq 2} \left| \frac{db}{dn} \right| \leq 1$, by Mean Value Theorem, it follows that the third term in (\ref{max-stability-bound}) is bounded by $i_n/a_n$.  To bound the first two terms, it also suffices to take $i_n = o(a_n)$.    
\end{example}

One may wonder if it is possible to eliminate the stability condition.  However, the following counterexample illustrates a somewhat artificial case in which it is not satisfied, leading to sample splitting not being valid.  Here, sample splitting validity fails because a finite number  of points (here one) has nontrivial influence on the estimator asymptotically.

\begin{example}[A case where stability condition is not satisfied]
Suppose $Y_1, \ldots Y_{2n}$ is a 1-dependent Gaussian process with mean $0$ and that our estimator is given by:
\begin{align}
\hat{\theta}_m(Y_{n+1}^{2n}) = Y_{n+1} + \frac{1}{\sqrt{n}} \sum\limits_{i=n+2}^{2n} Y_i
\end{align}
We can immediately see that the distribution of $\hat{\theta}_m(Y_{n+1}^{2n})$ conditioning on $Y_1^n$ will not converge to the unconditional distribution due to the strong influence of $Y_{n+1}$.
\end{example}
One may observe that these kind of trivial estimators can be eliminated by leaving a gap between the selection and inference sets.  While deleting observations allows one to eliminate the stability condition, this introduces a tuning parameter that is typically not needed.  

\subsection{Some Additional Asymptotic Tools for Establishing Sample Splitting Validity}
Even though our notion of stability is weak, proving it for a complicated functional can be nontrivial.  In the fixed-dimension case, we can work around this issue by expressing the functional as a composition of mappings.  If it is possible to establish sample-splitting validity using the stability theorem for a simpler intermediate mapping, one can use weakly approaching analogs of standard limit theorems for the composition.

\begin{proposition}[Weakly Approaching Continuous Mapping Theorem]
\label{continuous-mapping}
 Let $h(\cdot)$ be a continuous function in $\mathcal{X}$. Then,
 \begin{enumerate}
 \item[(i)] If $X_n {\overset{wa} \iff } Y_n$, then $h(X_n) {\overset{wa} \iff } h(Y_n)$.  
 \item[(ii)] If $X_n \ | \ \mathbb{Z}_n {\overset{wa(P)} \iff } Y_n$, then $h(X_n) \ | \  \mathbb{Z}_n {\overset{wa(P)} \iff } h(Y_n)$
 \end{enumerate}
\end{proposition}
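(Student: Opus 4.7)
The plan is to reduce both statements directly to the defining property of weakly approaching sequences (and its conditional analogue), exploiting the fact that the composition of a bounded continuous function with a continuous function is again bounded continuous. In essence, each part is an application of the definition to a suitably chosen test function.

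For part (i), fix an arbitrary bounded continuous $f : h(\mathcal{X}) \mapsto \mathbb{R}$, and consider the function $\varphi := f \circ h$. Since $h$ is continuous on $\mathcal{X}$ and $f$ is continuous and bounded, $\varphi$ is continuous and satisfies $\|\varphi\|_\infty \leq \|f\|_\infty < \infty$. Hence $\varphi$ belongs to the class of bounded continuous functions used to define $\overset{wa}{\iff}$. Applying that definition to $\varphi$ with the sequences $X_n, Y_n$ yields
\begin{align}
\mathbb{E} f(h(X_n)) - \mathbb{E} f(h(Y_n)) = \mathbb{E} \varphi(X_n) - \mathbb{E} \varphi(Y_n) \longrightarrow 0,
\end{align}
and since $f$ was arbitrary, $h(X_n) \overset{wa}{\iff} h(Y_n)$.

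Part (ii) follows by exactly the same device, now applied to the regular conditional expectations. For any bounded continuous $f$, again set $\varphi = f \circ h$, which is bounded and continuous. By hypothesis $X_n \mid \mathbb{Z}_n \overset{wa(P)}{\iff} Y_n$, so by the definition of weak approach in probability applied with test function $\varphi$,
\begin{align}
\mathbb{E}[f(h(X_n)) \mid \mathbb{Z}_n] - \mathbb{E}[f(h(Y_n)) \mid \mathbb{Z}_n] = \mathbb{E}[\varphi(X_n) \mid \mathbb{Z}_n] - \mathbb{E}[\varphi(Y_n) \mid \mathbb{Z}_n] \xrightarrow{P} 0.
\end{align}
Since this holds for every bounded continuous $f$, the conclusion $h(X_n) \mid \mathbb{Z}_n \overset{wa(P)}{\iff} h(Y_n)$ follows.

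There is no real obstacle here; the only subtlety is the observation that one can legitimately plug $f \circ h$ into the defining condition, which requires nothing beyond the preservation of boundedness and continuity under composition. Measurability of $\varphi(X_n)$ and $\varphi(Y_n)$, needed for the conditional expectations in (ii) to make sense, is also immediate since $\varphi$ is continuous and hence Borel measurable.
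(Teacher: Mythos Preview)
Your proof is correct and matches the paper's own remark that this result ``is an immediate consequence of the definition of weakly approaching random variables'': you simply observe that $f \circ h$ is bounded continuous whenever $f$ is, and feed it into the defining condition. One small notational point: in part (ii) the paper's hypothesis is $X_n \mid \mathbb{Z}_n \overset{wa(P)}{\iff} Y_n$ with $Y_n$ unconditional, so the second term should read $\mathbb{E}[f(h(Y_n))]$ rather than $\mathbb{E}[f(h(Y_n)) \mid \mathbb{Z}_n]$, but the argument is unaffected.
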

 The variant of the continuous mapping theorem stated here is an immediate consequence of the definition of weakly approaching random variables; as in weak convergence theory, weaker versions are possible, but we do not pursue this direction here.

 We will also state a version of the Delta Method below where the parameter $\theta_n$ need only belong to a closed ball asymptotically; although the proof is not particularly difficult, to our knowledge, it is the first result of its kind.   We only require that the second derivative is well-behaved over the ball.  This condition allows uniform control of the remainder term over a family of Taylor expansions. Uniform control is needed to deal with the fact that the centering of the Taylor expansion is allowed to change with $n$.

While we believe the differentiability condition is reasonable, it should be noted that weaker conditions are possible if one assumes that $\theta_n \rightarrow \theta$. See \citet{van-der-vaart-asymptotic-statistics}[Theorem 3.8] for a Delta Method along these lines.  

\begin{proposition}[Weakly Approaching Conditional Delta Method]
\label{delta-method}
Suppose that, for some sequence $\{\theta_n\}_{n \in \mathbb{N}} \in \mathcal{U}$ and some $\gamma > 0$,  
\begin{align}
\mathcal{L} ( n^\gamma [ U_n - \theta_n] \  | \  \mathbb{W}_n) {\overset{wa(P)} \iff} \mathcal{L}(V_n)
\end{align}
where $V_n$ is tight. Suppose there exists a closed ball $B_r$ with radius $0 <r <\infty$ such that $\theta_n \in B_r$ for all $n > N$ and that, for some $\epsilon >0$, $h(\cdot)$ is twice continuously differentiable on $B_{r + \epsilon}$.  Then,   
\begin{align}
\mathcal{L} (n^\gamma [h(U_n) - h(\theta_n)] \  | \  \mathbb{W}_n) {\overset{wa(P)} \iff} \mathcal{L}( \nabla h(\theta_n)^T V_n )
\end{align}
\end{proposition}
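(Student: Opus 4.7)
The plan is to reduce the statement to the weakly approaching conditional Slutsky theorem (Proposition \ref{conditional-slutksy}) via a first-order Taylor expansion of $h$ centered at the moving point $\theta_n$. The critical technical input is that twice continuous differentiability of $h$ on the enlarged ball $B_{r+\epsilon}$ yields a quadratic remainder bound that is \emph{uniform} over admissible centers $\theta \in B_r$, which is necessary because $\theta_n$ is allowed to drift with $n$.

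By continuity of $\nabla^2 h$ on the compact set $B_{r+\epsilon}$, the constant $M := \sup_{x \in B_{r+\epsilon}} \|\nabla^2 h(x)\|$ is finite, and Taylor's theorem gives
\begin{align*}
\bigl| h(u) - h(\theta) - \nabla h(\theta)^T(u - \theta) \bigr| \leq \tfrac{M}{2}\, \|u - \theta\|^2
\end{align*}
for all $\theta \in B_r$ and $u \in B_{r+\epsilon}$. Setting $u = U_n$, $\theta = \theta_n$, and multiplying by $n^\gamma$, the remainder
\begin{align*}
R_n := n^\gamma\bigl[h(U_n) - h(\theta_n)\bigr] - \nabla h(\theta_n)^T\, n^\gamma(U_n - \theta_n)
\end{align*}
satisfies $|R_n| \leq (M/2)\, n^{-\gamma}\, \|n^\gamma(U_n - \theta_n)\|^2$ on the event $\{U_n \in B_{r+\epsilon}\}$. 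The weakly approaching hypothesis combined with tightness of $\{V_n\}$ forces $n^\gamma(U_n - \theta_n)$ to be conditionally bounded in probability, so the upper bound is $o_P(1)$; the same bound yields $\|U_n - \theta_n\| = o_P(1)$, which together with $\theta_n \in B_r$ (for $n > N$) places $U_n \in B_{r+\epsilon}$ with conditional probability tending to one, legitimizing the Taylor estimate.

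Once $R_n = o_P(1)$ is established, applying the weakly approaching conditional Slutsky theorem reduces the problem to showing
\begin{align*}
\mathcal{L}\bigl( A_n^T\, n^\gamma(U_n - \theta_n) \ \bigl|\bigr. \ \mathbb{W}_n \bigr) \overset{wa(P)}{\iff} \mathcal{L}\bigl( A_n^T V_n \bigr),
\end{align*}
where $A_n := \nabla h(\theta_n)$; by Proposition \ref{metricizing-weakly-approaching-probability} this is equivalent to $d_{BL}$-convergence in probability of the two conditional laws. \textbf{This is the main obstacle}, because the target law itself varies with $n$ through the deterministic matrix $A_n$, so one cannot apply the continuous mapping theorem (Proposition \ref{continuous-mapping}) against a single fixed continuous map. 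I would resolve this by a subsequence argument: $\{A_n\}$ eventually lies in the compact image $\nabla h(B_r)$, so every subsequence admits a further subsequence along which $A_{n_k} \to A$ for some matrix $A$. A bounded-Lipschitz estimate of the form $d_{BL}\bigl(\mathcal{L}(A_{n_k}^T X), \mathcal{L}(A^T X)\bigr) \leq E[\min(2,\, \|A_{n_k} - A\|\cdot\|X\|)]$, combined with dominated convergence and the conditional tightness of both $n_k^\gamma(U_{n_k} - \theta_{n_k})$ and $V_{n_k}$, reduces the problem along the further subsequence to continuous mapping with the fixed linear map $x \mapsto A^T x$, which the weakly approaching hypothesis handles directly. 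Invoking the standard characterization of convergence in probability by subsequences then yields the desired $d_{BL}$-convergence, and the conclusion follows.
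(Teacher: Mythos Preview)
Your proposal is correct and follows the same overall architecture as the paper: Taylor-expand $h$ at the drifting center $\theta_n$, use twice continuous differentiability on the compact $B_{r+\epsilon}$ to bound the quadratic remainder uniformly in $\theta_n$, and then finish with Slutsky-type arguments. The one substantive difference concerns the linear term $\nabla h(\theta_n)^T\, n^\gamma(U_n-\theta_n)$. You flag this as ``the main obstacle'' because the target law depends on $n$ through $A_n=\nabla h(\theta_n)$, and you resolve it with a subsequence/compactness argument plus a bounded-Lipschitz perturbation estimate. That works, but in this paper it is unnecessary: the conditional Slutsky theorem (Proposition~\ref{conditional-slutksy}(ii)) is deliberately stated for a \emph{uniformly bounded sequence of constants} $c_n$, not a fixed limit. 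Since $\nabla h$ is continuous on the compact $B_r$, the deterministic sequence $c_n=\nabla h(\theta_n)$ is uniformly bounded and trivially satisfies $X_n\xrightarrow{P}c_n$ with $X_n\equiv c_n$; one application of part (ii) then gives $\mathcal{L}(c_n^T\,n^\gamma(U_n-\theta_n)\mid\mathbb{W}_n)\overset{wa(P)}{\iff}\mathcal{L}(c_n^T V_n)$ directly. This is what the paper means by ``two more applications of Slutsky's Theorem.'' Your route is self-contained and would be the natural one if only a classical (fixed-limit) Slutsky lemma were available; the paper's route is shorter because the needed generality has already been built into its Slutsky statement. As a minor point in your favor, you are more explicit than the paper about why the Taylor estimate is legitimate on a high-probability event, via $\|U_n-\theta_n\|=o_P(1)$ and $\theta_n\in B_r$.
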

\begin{proof} 
A Taylor expansion at $\theta_n$ yields:
\begin{align}
n^\gamma[h(U_n)-h(\theta_n)] =  \nabla h(\theta_n)^T n^\gamma [U_n-\theta_n] + n^\gamma (U_n-\theta_n)^T R_{\theta_n}(U_n) (U_n-\theta_n)
\end{align}
where $R_{\theta_n}: \mathbb{R}^k \mapsto \mathbb{R}^{k \times k} $ is the remainder function for the expansion around $\theta_n$. The assumed differentiability condition implies that:
\begin{align}
\max_{u \in B_{r+\epsilon}} \norm{R_{\theta_n}(u)}_\infty \leq \max_{u \in B_{r+\epsilon}} \norm{ \nabla^2 h(u)}_\infty
\end{align}
where the maximum is attained by a finite constant due to the Boundedness Theorem. It follows that the family of Remainder functions $\{R_{\theta_n}(u)\}_{\theta_n \in B_r}$ is uniformly bounded for any Taylor expansion around $\theta_n \in B_{r}$ with radius $\epsilon$.  Thus, with high probability, the second term on the RHS is bounded by:
\begin{align}
C [(U_n-\theta_n)]^T \left[n^\gamma(U_n-\theta_n)\right]
\end{align}
for some $C < \infty$. By Slutsky's Theorem (Proposition \ref{slutksy}), it follows that this term converges to $0$ in probability.  Two more applications of Slutsky's Theorem give the desired result.    
\end{proof}

 \subsection{Sample Splitting Validity Under Random Design for Linear Regression Coefficients}
 \label{regression-setting}  
  
For a randomly chosen model $\hat{m}$, we will consider the problem of constructing asymptotically valid confidence intervals for the projection parameter $\beta_{\hat{m},n}$, which is a vector of regression coefficients fitted on the second half of the data. We will allow the regression coefficient to incorporate lags of the components of $Y_i$.  To make this more precise, we will introduce some additional notation.

For each $m \in \mathcal{M}$, let $X_{i}^{(m)} \in \mathbb{R}^{p^{(m)}}$ denote the covariates associated with $m$. $X_{i}^{(m)}$ may include both present values of $Y_{n, i,2}, \ldots Y_{n, i,p_n}$.  Let $Y_i^{m} = Y_{n,i,1}$. Define the projection parameter for a fixed $m \in \mathcal{M}$ as:  
 \begin{align}
 \label{projection-parameter}
\beta_{m,n} &= \mathbb{E}\left[ \frac{1}{n}\sum_{i=n+1}^{2n} X_{i}^{(m)} \ X_{i}^{(m) \  T} \right]^{-1} \mathbb{E}\left[ \frac{1}{n}\sum_{i=n+1}^{2n} Y_i^{(m)} \ X_{i}^{(m)} \right] 
  \end{align}

We may view the estimated parameter corresponding to a model $m$ as the empirical version of the projection parameter.  It can be readily seen that:
 \begin{align}
 \label{estimated-projection-parameter}
\hat{\beta}_{m,n} &= \left(\frac{1}{n}\sum_{i=n+1}^{2n} X_{i}^{(m)} \ X_{i}^{(m) \ T} \right)^{-1} \frac{1}{n}\sum_{i=n+1}^{2n} Y_i^{(m)} \ X_{i}^{(m)}
\end{align}
To further consolidate notation, define the following terms:
\begin{align}
\begin{split}
V_{i}^{(m)} &= \begin{pmatrix}
          \mathrm{vech}(X_{i}^{(m)} \  X_{i}^{(m) \ T}) \\ 
          Y_i^{(m)} \  X_{i}^{(m)}
         \end{pmatrix} 
\\ \psi_{n}^{(m)} &= \frac{1}{n} \sum_{i=n+1}^{2n} V_{i}^{(m)} 
\end{split}
\end{align}

Then, it is clear from (\ref{projection-parameter}) and (\ref{estimated-projection-parameter}) that we may express $\beta_{m,n} = g^{(m)}(\mathbb{E}[\psi_{n}^{(m)}])$ and $\hat{\beta}_{n} = g^{(m)}(\psi_{n}^{(m)})$ for some mapping $g^{(m)}(\cdot)$ for each model $m$. For random $\hat{m}$, we may express the projection parameter as  $\beta_{\hat{m},n} = \sum_{m \in \mathcal{M}} \beta_{m,n} \cdot \cI(\hat{m}(Y_1^n) = m)$ and its empirical version as  $\hat{\beta}_{\hat{m},n} = \sum_{m \in \mathcal{M}} \hat{\beta}_{m,n}  \cdot \cI(\hat{m}(Y_1^n) = m)$.  If a particular model $m$ is infeasible to fit on the inference set, then $P(\hat{m}(Y_1^n) = m) = 0$.  In addition, in what follows let $\widetilde{\mathcal{L}}( \cdot)$ denote the law with respect to the joint measure of $Y_1^n$ and $\widetilde{Y}_{n+1}^{2n}$.  

We will start by assuming that $|Y_{n,i} - \mathbb{E}[Y_{n,i}]| < M_n \ a.s$. for $1 \leq i \leq 2n$.  In this case, we may use the Stability Theorem together with the Conditional Delta Method to establish sample splitting validity for the regression coefficient, as demonstrated in the following theorem.

\begin{theorem}[Sample Splitting Validity, Bounded Case] Suppose that \\ $ \max_{1 \leq i \leq n} \max_{1 \leq j \leq p}$ $|Y_{ij} - \mathbb{E}[Y_{ij}]|\leq M_n$ $a.s.$  Furthermore, suppose there exists $\mathcal{M}^*$ such that $|\mathcal{M}^*| < \infty$ and $ P(\hat{m} \in \mathcal{M}^*) \rightarrow 1$.  For each $m \in \mathcal{M}^*$, suppose that $\sqrt{n}(\psi_{n}^{(m)} - \mathbb{E}[\psi_{n}^{(m)}])$ is tight, and $g^{(m)}(\cdot)$ is twice continuously differentiable on  $B_{r+\epsilon}$ for some $\epsilon > 0$ as defined in Proposition \ref{delta-method}.   

Further, for each $m \in \mathcal{M}^*$, suppose that one of the following conditions hold:
\begin{enumerate}[label=T]
\item  $  n^{3/2} M_n \mathlarger{\tau}_{n^{1/2-\delta}}(Y_1^{2n}) \rightarrow 0$.
\end{enumerate} 
\begin{enumerate}[label=B] 
\item $\beta_{n}(Y_1^{2n}) \rightarrow 0$.
\end{enumerate} 
Then,
\begin{align}
d_{BL}(\mathcal{L}( \sqrt{n}(\hat{\beta}_{\hat{m},n} - \beta_{\hat{m},n}) \ | \ Y_1^n), \widetilde{\mathcal{L}}(  \sqrt{n}(\hat{\beta}_{\hat{m},n}- \beta_{\hat{m},n})  \ | \ Y_1^n) \xrightarrow{P} 0 
\end{align}
\end{theorem}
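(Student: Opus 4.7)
The strategy is to reduce to a fixed model, apply the Stability Theorem to the raw sample averages underlying $\hat\beta_{m,n}$, and then lift that result to $\hat\beta_{m,n}$ via the Weakly Approaching Conditional Delta Method. By the indicator-partition argument used in the proof of Theorem \ref{low-dim-theorem} (invoking $|\mathcal{M}^*| < \infty$ and $P(\hat{m} \in \mathcal{M}^*) \to 1$), it is enough to show, for each fixed $m \in \mathcal{M}^*$, that
\[
d_{BL}\bigl(\mathcal{L}(\sqrt{n}(\hat\beta_{m,n} - \beta_{m,n}) \mid Y_1^n),\ \mathcal{L}(\sqrt{n}(\hat\beta_m(\widetilde{Y}_{n+1}^{2n}) - \beta_{m,n}))\bigr) \xrightarrow{P} 0,
\]
the conditioning on the right having been dropped because $\widetilde{Y}_{n+1}^{2n}$ is independent of $Y_1^n$.

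Next, I would apply Theorem \ref{low-dim-theorem} to the intermediate estimator $\hat\theta_m^{\mathrm{int}}(Y_{n+1}^{2n}) := \sqrt{n}(\psi_n^{(m)} - \mathbb{E}[\psi_n^{(m)}])$, with deleted companion $\hat\theta_m^{\mathrm{int}}(Y_{n+i_n+1}^{2n}) := \tfrac{1}{\sqrt{n-i_n}}\sum_{i=n+i_n+1}^{2n}(V_i^{(m)} - \mathbb{E}[V_i^{(m)}])$, taking $i_n = n^{1/2-\delta}$ for some small $\delta \in (0,1/2)$. Condition \ref{tight} follows from the assumed tightness of the non-deleted sum combined with \ref{stability-low-dim}. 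Condition \ref{stability-low-dim} is verified through a decomposition in the spirit of Example \ref{sample-mean-example}: the difference splits into a prefix average of order $O_P(\sqrt{i_n/n})$ plus a rescaling error $O(i_n/n)$ times a tight quantity, both $o_P(1)$ under boundedness. For \ref{tau-condition}, the almost-sure bound $M_n$ on $Y_{ij} - \mathbb{E}[Y_{ij}]$ makes each entry of $V_i^{(m)}$, which is a product of two centered coordinates of $Y_i$, Lipschitz in $Y_i$ under sup-norm with constant $O(M_n)$; summing $n-i_n$ such contributions and normalizing by $\sqrt{n-i_n}$ yields $\|\hat\theta_m^{\mathrm{int}}(Y_{n+i_n+1}^{2n})\|_L = O(\sqrt{n}\,M_n)$, whence $(n-i_n)\cdot \mathlarger{\tau}_{i_n}(Y_1^{2n})\cdot \|\hat\theta_m^{\mathrm{int}}\|_L = O(n^{3/2} M_n\,\mathlarger{\tau}_{n^{1/2-\delta}}(Y_1^{2n})) \to 0$. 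Under \ref{beta-condition}, measurability is automatic and the hypothesis supplies the $\beta$-condition of Theorem \ref{low-dim-theorem} for an appropriate $i_n$ directly. Combining the Stability Theorem with Proposition \ref{metricizing-weakly-approaching-probability} yields
\[
\mathcal{L}\bigl(\sqrt{n}(\psi_n^{(m)} - \mathbb{E}[\psi_n^{(m)}]) \mid Y_1^n\bigr) \overset{wa(P)}{\iff} \mathcal{L}\bigl(\sqrt{n}(\widetilde{\psi}_n^{(m)} - \mathbb{E}[\psi_n^{(m)}])\bigr),
\]
with the right-hand sequence tight.

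Finally, I would invoke the Weakly Approaching Conditional Delta Method (Proposition \ref{delta-method}) with $h = g^{(m)}$, $U_n = \psi_n^{(m)}$, $\theta_n = \mathbb{E}[\psi_n^{(m)}]$, $\mathbb{W}_n = Y_1^n$, and $\gamma = 1/2$, using the twice continuous differentiability of $g^{(m)}$ on $B_{r+\epsilon}$ and the implicit containment $\theta_n \in B_r$ for large $n$. This delivers
\[
\mathcal{L}\bigl(\sqrt{n}(\hat\beta_{m,n} - \beta_{m,n}) \mid Y_1^n\bigr) \overset{wa(P)}{\iff} \mathcal{L}\bigl(\nabla g^{(m)}(\mathbb{E}[\psi_n^{(m)}])^T \sqrt{n}(\widetilde{\psi}_n^{(m)} - \mathbb{E}[\psi_n^{(m)}])\bigr).
\]
Applying the same Delta Method (with trivial conditioning, since $\widetilde{Y}_{n+1}^{2n}$ is independent of $Y_1^n$) to the independent copy $\hat\beta_m(\widetilde{Y}_{n+1}^{2n})$ yields the same right-hand limit. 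Tightness of this limit follows from continuity of $\nabla g^{(m)}$ on the compact ball $B_{r+\epsilon}$ together with tightness of $\sqrt{n}(\widetilde{\psi}_n^{(m)} - \mathbb{E}[\psi_n^{(m)}])$, so one more use of Proposition \ref{metricizing-weakly-approaching-probability} and the triangle inequality on $d_{BL}$ produce the required in-probability convergence.

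The hard part will be the Lipschitz bookkeeping required for hypothesis \ref{tau-condition}: one must carefully show that the almost-sure bound $M_n$ propagates through the quadratic entries of $V_i^{(m)}$ and through the $1/\sqrt{n-i_n}$ normalization to yield the $\sqrt{n}\,M_n$ Lipschitz factor, which is precisely what forces the $n^{3/2}M_n$ prefactor in \ref{tau-condition}. The \ref{stability-low-dim} check is mild given the tightness and boundedness assumptions, and once the Stability Theorem has been applied to the intermediate quantity, the Delta Method step is essentially mechanical.
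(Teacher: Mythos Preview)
Your proposal is correct and follows essentially the same route as the paper: apply the Stability Theorem to the intermediate normalized sum $f^{(m)}(Y_{n+1}^{2n})=\sqrt{n}(\psi_n^{(m)}-\mathbb{E}[\psi_n^{(m)}])$ with $i_n=n^{1/2-\delta}$, verify \ref{stability-low-dim} via Example~\ref{sample-mean-example} and \ref{tau-condition} via the Lipschitz bound $\|f^{(m)}\|_L = O(\sqrt{n}\,M_n)$ (the paper records it as $2M_n\sqrt{n}$), and then lift to $\hat\beta_{m,n}=g^{(m)}(\psi_n^{(m)})$ via the Weakly Approaching Conditional Delta Method. Your write-up is more explicit than the paper's about how the Delta Method interacts with the independent-copy side and the conversion back to $d_{BL}$ via Proposition~\ref{metricizing-weakly-approaching-probability}, but no new idea is involved.
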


\begin{proof}
We will define $\hat{\theta}_{m}(Y_{n+1}^{2n}) \equiv g^{(m)} \circ f^{(m)}(Y_{n+1}^{2n})$ as the following composition of functions:
\begin{align}
\label{Delta-Method-composition}
\underbrace{Y_{n+1}^{2n} \mapsto V_{n+1}^{2n} \mapsto \sqrt{n} \psi_n}_{f^{(m)}(Y_{n+1}^{2n})} \mapsto \underbrace{\theta_{m}}_{g^{(m)}(\psi_n)}
\end{align}
We will need to check condition \ref{stability-low-dim} and \ref{tau-condition} for the mapping $f^{(m)}(Y_{n+1}^{2n})$. Since $g^{(m)}(\cdot)$ is differentiable at $\mathbb{E}[\psi_n^{(m)}]$, the result would then follow from the Delta Method.         
 Pick $i_n = n^{1/2 -\delta}$; the stability condition was established in Example \ref{sample-mean-example} under the tightness assumption, which is satisfied due to the Central Limit Theorem given in Theorem \ref{non-stationary-clt}.  For \ref{tau-condition}, notice that the Lipschitz constant of the mapping is given by $2 M_n \sqrt{n}$.
\end{proof}

Now, we will prove the unbounded case.  The main technical challenge is dealing with the fact that the mapping $f^{(m)}(Y_{n+1}^{2n})$ is only locally Lipschitz and not globally Lipschitz.  At several stages in the proof, we will reduce the problem to the bounded case by using a maximal inequality for sub-exponential random variables.  We will adopt the following definition:
\begin{definition}[Sub-Exponential($K_1$)]
A real-valued random variable $X$ is sub-exponential with parameter $K_1$, or $X \sim \mathrm{SubE}(K_1)$ if:
\begin{align}
P(|X| > t ) \leq \exp\left(\frac{1-t}{K_1} \right)
\end{align}
\end{definition}
As discussed in, for example, \citet{vershynin-nonasympotic-theory-random-matrices}, there are several equivalent characterizations of sub-exponential random variables. For convenience, in a later section we will also state the sub-exponential condition in terms of the sub-exponential norm, which is defined as follows:
\begin{definition}[Sub-Exponential Norm] The sub-exponential norm of a real-valued random variable $X$, denoted $\norm{X}_{\Psi_1}$ is given by:
\begin{align}
\norm{X}_{\Psi_1} = \sup_{p \geq 1} p^{-1} \left(\mathbb{E}[X^p]\right)^{1/p}
\end{align} 
\end{definition}
A random variable is sub-exponential if and only if $\norm{X}_{\Psi_1}<\infty$. In fact, it can be shown that $K_1$ and $\norm{X}_{\Psi_1}$ differ by at most a universal constant factor.  Typically, the sub-exponential condition for a random vector $X$ in $\mathbb{R}^d$ is defined as $\sup_{\alpha \in \mathcal{S}^{d-1}} \norm{\alpha^T X}_{\Psi_1}$, where $\mathcal{S}^{d-1}$ is the unit sphere in $\mathbb{R}^d$. Here, we will only require that the coordinates are sub-exponential, which is clearly a weaker condition. 

At a high level, we will also need a way to return to the unbounded case after performing manipulations in the bounded case.  To this end, we will use a concept from analysis known as the Lipschitz extension.  In essence, Lipschitz functions are well-behaved enough that it is often possible to extend a function defined on a subset of the space to the whole space while preserving the Lipschitz property. In particular, when the codomain is $\mathbb{R}$, explicit constructions of the extension are known and facilitate our analysis.  The boundedness of the function class is also crucial; this property allows us to control the expectation on the complement of a bounded set.         

\begin{theorem}[Sample Splitting Validity of Regression Coefficient, Sub-Exponential Case]
\label{sub-exponential-sample-splitting}
Consider a model selection procedure $\hat{m}$ that has the property $P(\hat{m} \in \mathcal{M}^*) \rightarrow 1$ for some $\mathcal{M}^*$ satisfying $|\mathcal{M}^*| < \infty$. For each $m \in \mathcal{M}^*$, suppose that $\sqrt{n}(\psi_{n}^{(m)} - \mathbb{E}[\psi_{n}^{(m)}])$ is tight, and $g^{(m)}(\cdot)$ is twice continuously differentiable on  $B_{r+\epsilon}$ for some $\epsilon > 0$ as defined in Proposition \ref{delta-method}. 

Further, for each $m \in \mathcal{M}^*$, suppose that one of the following conditions hold:
\begin{enumerate}[label=T]
\item  For some $\delta>0$, $n^{3/2 + \delta} \mathlarger{\tau}_{n^{1/2-\delta}}(Y_1^{2n}) \rightarrow 0$ and the coordinates of $(Y_i^{(m)} - \mathbb{E}[Y_i^{(m)}], \  X_{i}^{(m)}-\mathbb{E}[X_i^{(m)}])$ are $\mathrm{subE}(K_1)$ random variables.   
\end{enumerate} 
\begin{enumerate}[label=B] 
\item $\beta_{n}(Y_1^{2n}) \rightarrow 0$.
\end{enumerate} 

Then, 
\begin{align}
d_{BL}(\mathcal{L}( \sqrt{n}(\hat{\beta}_{\hat{m},n} - \beta_{\hat{m},n}) \ | \ Y_1^n), \widetilde{\mathcal{L}}(  \sqrt{n}(\hat{\beta}_{\hat{m},n}- \beta_{\hat{m},n})  \ | \ Y_1^n) \xrightarrow{P} 0 
\end{align}
\end{theorem}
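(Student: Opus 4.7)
The plan is to reduce, via the law-of-total-probability decomposition used in the proof of Theorem~\ref{low-dim-theorem} and the finiteness of $\mathcal{M}^*$, to showing that for each fixed $m \in \mathcal{M}^*$ sample splitting is asymptotically valid for the ``inner'' statistic $\sqrt{n}(\psi_n^{(m)} - \E[\psi_n^{(m)}])$. The Conditional Delta Method (Proposition~\ref{delta-method}) applied with the twice differentiable $g^{(m)}$ then transfers this to $\sqrt{n}(\hat{\beta}_{\hat{m},n} - \beta_{\hat{m},n})$, since both the conditional distribution and its independent-copy analogue are weakly approached in probability by the common linear approximation $\nabla g^{(m)}(\E[\psi_n^{(m)}])^\top \sqrt{n}(\psi_n^{(m)} - \E[\psi_n^{(m)}])$, which is tight because $\E[\psi_n^{(m)}]$ lies in $B_r$ eventually and $\nabla g^{(m)}$ is continuous.

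Under condition B the Stability Theorem applies directly to $f^{(m)}: y_{n+1}^{2n} \mapsto \sqrt{n}(\psi_n^{(m)} - \E[\psi_n^{(m)}])$: measurability is automatic, tightness A\ref{tight} is a hypothesis, and stability A\ref{stability-low-dim} follows coordinatewise from the sample-mean computation of Example~\ref{sample-mean-example}.

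The substantive work is condition T, where $f^{(m)}$ is not globally Lipschitz because $V_i^{(m)}$ contains the bilinear entries of $X_i^{(m)} X_i^{(m)\top}$ and of $Y_i^{(m)} X_i^{(m)}$. I propose truncating the data. Let $\phi_M(y) = (-M) \vee (y \wedge M)$, which is $1$-Lipschitz, form $\tilde V_i^{(m)}$ by replacing every factor $(Y_{ij} - \E[Y_{ij}])$ in $V_i^{(m)}$ by its image under $\phi_{M_n}$, and set $\tilde\psi_n^{(m)} = n^{-1}\sum_{i=n+1}^{2n} \tilde V_i^{(m)}$. Choosing $M_n = C K_1 \log n$ with $C$ large, the sub-exponential tail and a union bound over the finitely many coordinates give $P(E_n^c) \to 0$ for $E_n = \{\max_{n < i \le 2n, j}|Y_{ij} - \E[Y_{ij}]| \le M_n\}$, and on $E_n$ one has $\tilde\psi_n^{(m)} = \psi_n^{(m)}$. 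The truncated bilinear terms are $O(M_n)$-Lipschitz in each argument, so $y_{n+1}^{2n} \mapsto \sqrt{n}(\tilde\psi_n^{(m)} - \E[\psi_n^{(m)}])$ is $O(\sqrt{n} M_n)$-Lipschitz in the sup-norm and each summand is bounded by $O(M_n^2)$. Tightness A\ref{tight} of the truncated object is inherited from the untruncated one via $P(E_n^c) \to 0$; stability A\ref{stability-low-dim} with $i_n = n^{1/2 - \delta}$ follows from the Example~\ref{sample-mean-example} bound $O(i_n M_n^2/\sqrt{n}) + O(i_n/n)\cdot O_P(1) = o_P(1)$. Plugging into T,
\begin{align*}
(n - i_n)\, O(\sqrt{n}\, M_n)\, \mathlarger{\tau}_{i_n}(Y_1^{2n}) = O\bigl(n^{3/2}\log n \cdot \mathlarger{\tau}_{n^{1/2-\delta}}(Y_1^{2n})\bigr) = o(1),
\end{align*}
because $\log n = o(n^\delta)$ and $n^{3/2+\delta}\mathlarger{\tau}_{n^{1/2-\delta}}(Y_1^{2n}) \to 0$ by hypothesis, yielding sample splitting validity for the truncated statistic.

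Finally I transfer back to the untruncated statistic: for any $h \in BL_1$,
\begin{align*}
\bigl|\E[h(\sqrt{n}(\psi_n^{(m)} - \E[\psi_n^{(m)}])) \mid Y_1^n] - \E[h(\sqrt{n}(\tilde\psi_n^{(m)} - \E[\psi_n^{(m)}])) \mid Y_1^n]\bigr| \le 2\, P(E_n^c \mid Y_1^n),
\end{align*}
and since $\E[P(E_n^c \mid Y_1^n)] = P(E_n^c) \to 0$, Markov's inequality gives $P(E_n^c \mid Y_1^n) = o_P(1)$, with the analogous bound on the independent-copy side. By Proposition~\ref{metricizing-weakly-approaching-probability} validity passes from $\tilde\psi_n^{(m)}$ to $\psi_n^{(m)}$, and a final application of the Conditional Delta Method completes the argument. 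The main obstacle is the rate matching in the $\tau$-case: the $n^\delta$ slack in the dependence hypothesis must simultaneously absorb both the $\log n$-scale truncation forced by sub-exponentiality and the linear-in-$M_n$ growth of the Lipschitz constant of the truncated bilinear entries of $V_i^{(m)}$; the role of the boundedness of the $BL_1$ test class is precisely to let $P(E_n^c \mid Y_1^n)$ control the truncation error in the transfer step.
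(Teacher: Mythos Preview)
Your argument is correct, and it takes a genuinely different route from the paper under condition T. The paper never modifies the functional $f^{(m)}$; instead it works directly with the bounded Lipschitz distance, splitting each integral $\int h\circ f^{(m)}\,d(Q_n-R_n)$ over the high-probability set $A_n=\{\max_{i,k}|Y_{ik}|\le M_n\}$ and its complement. On $A_n$ the composition $h\circ f^{(m)}$ is locally Lipschitz with constant $O(\sqrt{n}\,M_n)$, and the paper passes to a global Lipschitz extension of a near-maximizing test function $h_n^*$ so that the Wasserstein (hence $\tau$) bound applies; the price is an explicit correction term supported on $A_n^c$, which is controlled via the maximal Lipschitz extension formula and the tail-integral representation of the expectation, requiring a second tuning sequence $c_n$.

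Your approach pushes the truncation one level earlier: by clipping the data coordinates you replace $f^{(m)}$ by a genuinely globally Lipschitz $\tilde f^{(m)}$ that coincides with $f^{(m)}$ on $E_n$, so the Stability Theorem applies off the shelf and the only ``correction'' is the elementary bound $|\E[h\circ f^{(m)}\mid Y_1^n]-\E[h\circ \tilde f^{(m)}\mid Y_1^n]|\le 2\,P(E_n^c\mid Y_1^n)$. This is shorter and more modular, and the rate bookkeeping is identical: both arguments need exactly the $n^\delta$ slack in the $\tau$-hypothesis to absorb the $\log n$ truncation scale against the $O(\sqrt{n}\,M_n)$ Lipschitz constant. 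The paper's approach, working at the level of the $BL_1$ test class rather than the functional, is in principle more flexible for situations where there is no convenient data-level truncation that makes the statistic globally Lipschitz while agreeing with the original on a high-probability event; for the bilinear regression functional considered here, your route is the cleaner one.
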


\begin{proof}
Again, we will will use the Delta Method.  First we will find $i_n$ such that Condition \ref{stability-low-dim} is satisfied. Again $i_n = n^{1/2 -\delta}$ for $\delta >0$ suffices by Example \ref{sample-mean-example} under the tightness assumption.    

 Our next step is to show $d_{BL}(\mathcal{L}(f(Y_{n+i_n +1}^{2n}) \ | \ Y_1^n), \widetilde{\mathcal{L}}(f(Y_{n+i_n+1}^{2n}) \ | \ Y_1^n) \xrightarrow{P} 0$.  Here, we will have to deviate from the stability theorem since the mapping is not Lipschitz.  However, may still partition on $\{\hat{m} = m\}$ and use the law of total probability, effectively freezing this element of randomness.  Let $\mathcal{S}^{(m)}$ denote the indices corresponding to the coordinates included in model $m$. For notational convenience, assume that the coordinates are centered; the average expected value will be the ``parameter'' in the Delta Method.  Consider the following set: 
\begin{align}
A_n = \left\{ \max_{n+ i_n + 1 \leq j \leq 2n} \ \max_{k \in \mathcal{S}^{(m)}} | Y_{jk}|  \leq  M_n  \right\}
\end{align}  
Let $Q_n$ and $R_n$ denote the probability measures associated with  $\mathcal{L}(Y_{n+i_n+1}^{2n})$ and $\mathcal{L}(Y_{n+i_n+1}^{2n} \ | \  Y_1^n)$, respectively.  We have that:   

\begin{align}
\begin{split}
\sup_{h \in BL_1} \int h (\sqrt{n} \psi_n) \  d (Q_n- R_n)  &\leq \sup_{h \in BL_1} \int_{A_n} h (\sqrt{n} \psi_n) \  d (Q_n- R_n) 
\\ \ \ \ \  & + \sup_{h \in BL_1} \int_{A_n^c} h (\sqrt{n}  \psi_n) \  d (Q_n- R_n) 
\\ &= \mathbf{I} + \mathbf{II}
\end{split}
\end{align} 

We will start by bounding $\mathbf{II}$.  Notice that:
\begin{align}
\mathbf{II} \leq Q_n(A_n^c) + R_n(A_n^c)
\end{align}
Therefore, by Markov's inequality, it follows that:
\begin{align}
P(\mathbf{II} > \epsilon ) \leq \frac{2 Q_n(A_n^c)}{\epsilon}
\end{align}

Since the selected coordinates are $\mathrm{subExp}(K_1)$, we have the following maximal inequality:
\begin{align}
P(A_n^c) \leq  2 (n-i_n) \cdot p^{(m)} \cdot \exp{\left(\frac{1-M_n}{K_1}\right)} 
\end{align}

 Note that $M_n$ can be chosen to grow very slowly while still ensuring that $\mathbf{II} \xrightarrow{P} 0$.  However, since we will need to take several factors into consideration, we will wait until the end of the proof to choose tuning parameters. 

For $\mathbf{I}$, notice that the local Lipschitz constant on $A_n$ is $ 2 \sqrt{n} M_n$.  We will now replace the original function class, which had poor global Lipschitz properties, with a more well-behaved function class, and show that the difference between these classes is small asymptotically.    We can view the current function class $\mathcal{H}_n$ as:
\begin{align}
\mathcal{H}_n = \left\{ h: \mathbb{R}^{p^{(m)}} \mapsto \mathbb{R} \ \biggr\rvert \  \norm{h}_\infty \leq 1, \ ||h||_L \leq  2 \sqrt{n} M_n \ \forall y \in A_n, \ \ h(y) = 0 \  \forall y \in A_n^c  \right\}
\end{align}

Since $\sup_{h \in \mathcal{H}_n} \int h d (Q-R) \leq 1 < \infty$, it follows that for any $\epsilon_n > 0$, there exists $h_n^* \in \mathcal{H}_n $ such that $\sup_{h \in \mathcal{H}_n} \int h d (Q_n - R_n) \leq \int h_n^* d (Q_n - R_n) + \epsilon_n$.\footnote{It may be the case that the supremum is attained. In this case, $\epsilon_n$ can be taken to be equal to 0.  Otherwise, it should be noted that we are implicitly invoking the Axiom of Choice when choosing a corresponding $h_n^*$.}  We will choose an arbitrary positive sequence $\epsilon_n \downarrow 0$.  

In addition, let $\bar{\mathcal{H}}_n$ denote the class of Lipschitz extensions of $h_n^*$. Furthermore, let $\mathcal{G}_n$ denote the following function class:
\begin{align}
\mathcal{G}_n = \{ g:  ||g||_L \leq 2 \sqrt{n} M_n\}
\end{align}
Notice that all Lipschitz extensions of $h_n^*$ are contained in $\mathcal{G}_n$.  If there exists $h_n \in \bar{\mathcal{H}}_n$ that satisfies $\int_{A_n^c} h_n d (Q_n - R_n) \geq 0$, it follows that the supremum over $\mathcal{G}_n$ is greater than or equal to the supremum over $\mathcal{H}_n$; otherwise a correction is needed.  Therefore, it follows that:     
\begin{align}
\label{lipschitz-extension-correction-step}
\begin{split}
\mathbf{I} &= \sup_{h \in \mathcal{H}_n} \int h \  d (Q_n- R_n)
\\ & \leq \int h_n^* \ d (Q_n- R_n) + \epsilon_n
\\ &\leq \sup_{g \in \mathcal{G}_n} \int g \ d (Q_n- R_n) + \left[0  \vee \inf_{h \in \bar{\mathcal{H}}_n} \int_{A_n^c} h \ d (R_n - Q_n) \right] + \epsilon_n
\\ & \leq 2 \sqrt{n} M_n d_W(Q_n, R_n) + 0 \vee \mathbf{III} + \epsilon_n
\end{split}
\end{align} 
By Squeeze Theorem, it is sufficient to find a sequence of non-negative upper bounds such that $\mathbf{III} \leq L_n \xrightarrow{P} 0$ to show $0 \vee \mathbf{III} \xrightarrow{P} 0$.  Since $R_n$ is a conditional probability measure, the function $h_n^*$ depends on $\omega$ (that is, different functions may be chosen for different values of $Y_1^n$).    However, we will suppress dependence on $\omega$ for notational convenience; we will see that the bounds will hold over all $\omega$, so not much is lost in doing so.

The first term will be controlled by the $\mathlarger{\tau}$-coefficient.  Notice that the class of Lipschitz functions that take as input the selected coordinates are a subset of all Lipschitz functions mapping from $\mathbb{R}^p$ to $\mathbb{R}$. Since the argument in $\mathbf{III}$ is an infimum, we are free to choose any $\bar{h}_n$ in the class.  We will choose the maximal Lipschitz extension:
\begin{align}
\bar{h}_n(y) = \inf_{a \in A_n} \left[ h_n^*(a) + 2 \sqrt{n} M_n \norm{y - a}_\infty \right]
\end{align}.

Again, we take the sup-norm to be $\max_{k \in \mathcal{S}^{(m)}}|X_k|$ since we only need to consider Lipschitz functions that take as input the included covariates.  We will now split $\mathbf{III}$ into two parts and we may take $L_n = \mathbf{IV} + \mathbf{V}$:
\begin{align}
\begin{split}
\mathbf{III} &\leq  \int_{A_n^c} \bar{h}_n(y) \ d (R_n - Q_n)
\\ & \leq \left| \int_{A_n^c} \bar{h}_n \ d R_n \right|+ \left| \int_{A_n^c} \bar{h}_n \ d Q_n \right|
\\ & \leq \int_{A_n^c} |\bar{h}_n(y)| d Q_n + \int_{A_n^c} |\bar{h}_n(y)| d R_n
\\ &= \mathbf{IV} + \mathbf{V} 
\end{split}
\end{align}  
We will start by showing that $\mathbf{IV} \rightarrow 0$ for all $\omega \in \Omega$.  Since $ \norm{h_n^*}_\infty \leq 1$, it follows that $\bar{h}_n >0$ on $B_n^c$, where $B_n$ is given by:
\begin{align}
B_n = \left \{\max_{n+ i_n + 1 \leq j \leq 2n} \ \max_{k \in \mathcal{S}^{(m)}} | Y_{jk}|  \leq  M_n+ \frac{1}{2 \sqrt{n}M_n}  \right\}
\end{align} 
On the portion of $A_n^c$ where $\bar{h}_n >0$, we will drop the absolute value; it will turn out that the other part is negligible. Let $C_n = A_n^c \cap B_n$ and $D_n = A_n^c \cap B_n^c$ .  Then, it follows that:   
\begin{align}
\begin{split}
\mathbf{IV} & \leq \int_{C_n} |\bar{h}_n| \ d Q_n + \int_{D_n} \left[ h_n^*(\mathbf{0}) + 2 \sqrt{n} M_n \norm{y - \mathbf{0}}_\infty \right] \ d Q_n
\\ &\leq \int_{C_n} |\bar{h}_n| \ d Q_n + Q_n(D_n) +  \int_{D_n} 2 \sqrt{n} M_n \norm{y}_\infty \ d Q_n 
\\ &\leq \int_{A_n^c} \left[ \  2 \ \mathbbm{1}(B_n) +  2 \sqrt{n} M_n \norm{y}_\infty \mathbbm{1}(B_n^c) \ \right] \ dQ_n + Q_n(A^c)
\\ & \equiv \int_{A_n^c} \phi \  d Q_n + Q_n(A_n^c) 
\\ &= \mathbf{VI} +  Q_n(A_n^c) 
\end{split}
\end{align}
We will proceed by bounding $\mathbf{VI}$.  Our strategy now is to use the tail formula for the expectation. A crucial part in this step is breaking the integral into two parts, where one is controlled by $P(A_n^c)$ and the other by $\sqrt{n}M_n || y||_\infty$.  Choosing the breakpoint $c_n$ properly will allow us to send both terms to 0 (it will also allow us to restrict our attention to $\phi$ defined on $B_n^c$).
\begin{align}
\begin{split}
\mathbf{VI} &= \int_0^\infty P( \{\phi > s \} \cap A_n^c) \ ds
\\ &\leq \int_0^{c_n} P(A_n^c) \ ds + \int_{c_n}^\infty P(\phi >s ) \ ds
\\ &\leq c_n P(A_n^c) + \int_{c_n}^\infty P( 2 \sqrt{n} M_n \norm{y}_\infty >s ) \ ds
\\ &\leq c_n P(A_n^c) + 2 p^{(m)} (n-i_n) \int_{c_n}^\infty  \exp{\left( \frac{1-s} {2 \sqrt{n} M_n K_1} \right)} \ ds 
\\ &  \leq c_n P(A_n^c) +  4 p^{(m)} \ n^{3/2} M_n K_1 \exp{\left( \frac{1-c_n} {2 \sqrt{n} M_n K_1} \right)}
\\ & = \mathbf{VII} + \mathbf{VIII}
\end{split}
\end{align}
Now we will choose tuning parameters.  For some $\delta > 0$ and $n$ large enough, let:
\begin{align}
M_n = K_1\log(2n^{3/2 + 2 \delta}p^{(m)}), \ \ \  c_n =  \ n^{1/2 + \delta}, 
\end{align}
It may be verified that these choices of parameters ensure that:
\begin{align}
\mathbf{II} = P(A_n^c) \leq \frac{1}{n^{1/2 + 2\delta}} \rightarrow 0
\end{align}
\begin{align}
\mathbf{VII} = c_n P(A_n^c) \leq \frac{1}{n^\delta} \rightarrow 0
\end{align}
\begin{align}
\mathbf{VIII} =  4 p^{(m)} \  n^{3/2} M_n K_1 \exp{\left( \frac{1-n^{\delta}} {2 M_n K_1} \right)} \rightarrow 0
\end{align}
We can therefore conclude that $\mathbf{IV} \rightarrow 0$ for all $\omega \in \Omega$.  What remains is bounding $\mathbf{V}$.  Using similar reasoning to $\mathbf{IV}$, we can argue that:
\begin{align}
\mathbf{V} \leq \int_{A_n^c} \left[ \  2 \ \mathbbm{1}(C_n) + 2 \sqrt{n} M_n \norm{y}_\infty \mathbbm{1}(B_n^c) \ \right] \ d R_n + R_n(A_n^c)
\end{align}
Since this is a non-negative, fixed function for $\omega \in \Omega$, by Markov's inequality and law of iterated expectations, we can conclude that:
\begin{align}
P( \mathbf{V} > \epsilon ) \leq \frac{\int_{A_n^c} \left[ \  2 \ \mathbbm{1}(B_n) + 2 \sqrt{n} M_n \norm{y}_\infty \mathbbm{1}(B_n^c) \ \right] \ d Q_n + Q_n(A_n^c)}{\epsilon} 
\end{align}
But in bounding $\mathbf{IV}$, we concluded that the term in the numerator goes to zero.  Therefore $\mathbf{V} \xrightarrow{P} 0$ and the claim follows after an application of the Delta Method.  
\end{proof}


  \subsection{Bootstrap Inference Under $\theta$-Dependence and Non-stationarity}
 In this section, we consider the block-multiplier bootstrap for conducting inference after sample splitting.  The block-multiplier bootstrap is a generalization of the wild bootstrap, first proposed in \citet{wu-jackknife-resampling} and further examined in \citet{mammen-wild-bootstrap-regression} under independence and later by \citet{shao-dependent-wild} for stationary strongly mixing processes.  The block multiplier bootstrap is related to the block bootstrap, introduced by \citet{Kunsch-bootstrap-for-stationary-obs}, but differs in a fundamental way that allows bootstrap validity even under possible non-stationarity.     

In the block bootstrap, the original data is also split into blocks, but on each bootstrap iteration, blocks are sampled according to some distribution and stitched together to form a bootstrapped time series. When there is non-stationarity, disrupting the ordering of the blocks may be undesirable.  While some block bootstrap results exist under non-stationarity (see, for example, \citet{local-block-bootstrap} and \citet{synowiecki-periodic-bootstrap}), a certain degree of local stationarity or periodicity seems to be required for variants of the block bootstrap.  

In the theorem below, we establish the validity of the block multiplier bootstrap for the sample mean under $\theta$-dependence. This result for the sample mean in turn implies validity for a wide range of functionals, including regression coefficients. Again, since the $\theta$-dependence is weaker than $\tau$-dependence, this result implies validity of inference after sample splitting for $\tau$-dependent processes. We also require mean-stationarity, but in Remark \ref{non-stationary-mean} we discuss how one may relax this assumption.  While certain bootstrap results have been extended to $\Psi$-weak dependence measures (see, for example, \citet{hwang-psi-dependence-bootstrap}), to our knowledge, our result is the first one that does not require stationarity.         

We will also impose a uniform sub-exponential condition.  Our proof strategy involves controlling a covariance term between products of random variables, which necessitates light tails\footnote{It is also possible to control this covariance term with assumptions on the L\`{e}vy Concentration function as in \citet{weak-dependence}.}.  The tail condition can alternatively be stated in terms of higher moments, but since we assume sub-exponential tails for sample-splitting validity, we will use this condition here for consistency.  However, we would like to note that the condition on the block length is as weak as those for block bootstraps under mixing conditions; see \citet{Lahiri-resampling-for-dependent} for an overview.  We will leave the possibility of sharpening the tail assumptions in the theorem to future work.      

 Before stating our theorem, we will introduce some notation. Let $b_n$ denote the block length and let the number of blocks be given by $m_n = \lfloor n/b_n \rfloor$. Furthermore,  let the indices corresponding to block $l$ be given by $\mathcal{K}_l$, where $\mathcal{K}_l = \{(l-1)b+1, \ldots, lb \}$ for $1 \leq l \leq m$ and $\mathcal{K}_{m+1} = \{mb+1, \ldots, n\}$. Let $B_l = \sum_{ i \in \mathcal{K}_l} (Y_i - \bar{Y}_n)$ denote the de-meaned $l$th block sum, where $\bar{Y}_n = \frac{1}{n} \sum_{i=1}^n Y_{n,i}$.

Consider the following normalized sums:
\begin{align}
S_n^X = \frac{1}{\sqrt{n}} \sum_{i=1}^n (Y_i - \mathbb{E}[Y_i]) 
\end{align}

 The block-multiplier version of $S_n^X$, denoted as $W_n^Y$, is given by:
\begin{align}
W_n^Y = \frac{1}{\sqrt{n}}\sum_{l=1}^{m} e_l B_l 
\end{align} 
where $e_l \sim N(0,1)$ are generated independently from $Y_1^{2n}$.  A Gaussian multiplier is chosen here for simplicity; however, other multipliers may yield better properties beyond the first-order correctness established here; see, for example, \citet{mammen-wild-bootstrap-regression} or \citet{deng-zhang-beyond-gaussian-approx}. In the theorem below, we will establish conditions for when the block multiplier sum weakly approaches the normalized sum.
\begin{theorem}[Validity of Block Multiplier Bootstrap for $\theta$-dependent Processes]
\label{bootstrap-validity-tau-dependence}
Let $\{Y_{n,i}\}$ be a sequence in $\mathbb{R}^d$ with constant mean such that \\  $\sup_{i \in \mathbb{N}}$ $\max_{1 \leq j \leq d} \norm{Y_{ij} - \mathbb{E}[Y_{ij}]}_{\psi_1} < \mu $ and $\theta_r(Y_1^n) = O(r^{-\theta})$ as $n \rightarrow \infty$, where $\theta > 4$.   Further suppose that the block length satisfies $b_n \rightarrow \infty$  and $b_n= o(n)$.  Then, 
\begin{align}
\mathcal{L}(W_n^Y \ | \ Y_1^n) {\overset{wa(P)} \iff }  \mathcal{L}(S_n^X)  
\end{align}
\end{theorem}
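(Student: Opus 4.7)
The approach is to exploit the Gaussian conditional structure of $W_n^Y$ and reduce the claim to an approximation of covariance matrices. Conditional on $Y_1^n$, the bootstrap sum is a linear combination of independent $N(0,1)$ multipliers, so $W_n^Y \mid Y_1^n \sim N(0, \hat{\Sigma}_n)$ with $\hat{\Sigma}_n = \frac{1}{n}\sum_{l=1}^{m_n} B_l B_l^{T}$. In parallel, applying the non-stationary CLT (Theorem \ref{non-stationary-clt}) to the centered sequence $Y_i - \mathbb{E}[Y_i]$ yields that $\mathcal{L}(S_n^X)$ weakly approaches $\mathcal{L}(N(0,\Sigma_n))$, where $\Sigma_n = \mathrm{Var}(S_n^X)$, and the variance-approximation result advertised in the introduction ensures $\Sigma_n - \tilde{\Sigma}_n \to 0$ for $\tilde{\Sigma}_n := \frac{1}{n}\sum_{l=1}^{m_n} \mathrm{Var}(B_l)$. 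Combining these ingredients and appealing to Proposition \ref{metricizing-weakly-approaching-probability} reduces the theorem to establishing $\hat{\Sigma}_n - \tilde{\Sigma}_n \xrightarrow{P} 0$ entrywise, together with boundedness of $\tilde{\Sigma}_n$ (which gives the tightness required by the proposition and also ensures that two centered Gaussian laws whose covariances differ by $o_P(1)$ are $o_P(1)$-close in $d_{BL}$).

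I would establish this covariance convergence by Chebyshev. Writing the $(j,k)$-entry as $\hat{\Sigma}_{n,jk} - \tilde{\Sigma}_{n,jk} = \frac{1}{n}\sum_{l=1}^{m_n}(B_{l,j}B_{l,k} - \mathbb{E}[B_{l,j}B_{l,k}])$ (modulo an $O_P(n^{-1/2})$ contribution from the substitution of $\bar{Y}_n$ for $\mathbb{E}[Y_i]$ inside the blocks, which I would dispatch separately via a law-of-large-numbers style bound), its second moment decomposes into a diagonal piece of order $b_n^{2}/n = o(1)$, bounded via the sub-exponential norm of $B_{l,j}$ (of order $\sqrt{b_n}\log b_n$ under the hypothesis that the coordinates are $\mathrm{subE}(\mu)$ and $\theta$-dependent), and an off-diagonal piece involving $\mathrm{Cov}(B_{l,j}B_{l,k}, B_{l',j}B_{l',k})$ for $l < l'$.

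The off-diagonal covariance is the chief obstacle: $\theta$-dependence only controls covariances of the form $\mathrm{Cov}(f(\text{past}), g(\text{future}))$ with $g$ Lipschitz and $f$ bounded, so the quadratic map $B \mapsto B \otimes B$ is not directly accommodated. I would circumvent this by truncating each coordinate $B_{l,j}$ at a threshold $T_n$ chosen slightly above the sub-exponential envelope $\sqrt{b_n}\log n$, following the Lipschitz-extension strategy of the proof of Theorem \ref{sub-exponential-sample-splitting}: on the truncated event, the product is bounded of order $T_n^{2}$ and $O(T_n)$-Lipschitz in the later block, so that $|\mathrm{Cov}(B_{l,j}B_{l,k}, B_{l',j}B_{l',k})| \lesssim b_n T_n^{3}\, \theta_{(l'-l-1)b_n}$ modulo a tail term controlled by a sub-exponential deviation bound for $B_{l,j}$. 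Summing over $l'-l$ and using the hypothesis $\theta_r = O(r^{-\theta})$ with $\theta > 4$ keeps the off-diagonal contribution $o(1)$ whenever $b_n \to \infty$ and $b_n = o(n)$; this is precisely where the assumptions on $\theta$, on the block length, and on the tails all come into play. Once $\hat{\Sigma}_n - \tilde{\Sigma}_n = o_P(1)$ is secured, a standard Gaussian comparison together with the non-stationary CLT and the variance approximation assemble into the weakly-approaching-in-probability conclusion.
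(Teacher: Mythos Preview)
Your high-level architecture matches the paper's: the conditional law of $W_n^Y$ given $Y_1^n$ is centered Gaussian with covariance $\hat{\Sigma}_n$; the non-stationary CLT (Theorem~\ref{non-stationary-clt}) handles $S_n^X$; Lemma~\ref{multivariate-big-block-lemma} replaces $\Sigma_n$ by the independent-block covariance $\tilde{\Sigma}_n$; and Proposition~\ref{gaussian-characteristic-difference-multivariate} reduces everything to $\|\hat{\Sigma}_n-\tilde{\Sigma}_n\|_\infty \xrightarrow{P} 0$, which is then attacked entrywise via Chebyshev. The paper also isolates the $\bar Y_n$-centering contribution as $b_n\bar X_n\bar X_n^T$ and kills it with $\sqrt{n}\,\bar X_n=O_P(1)$ and $b_n=o(n)$, which is the precise version of your ``$O_P(n^{-1/2})$ substitution'' remark.

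The substantive divergence is in how $\mathrm{Cov}(B_{l,j}B_{l,k},\,B_{l',j}B_{l',k})$ is controlled. The paper does \emph{not} truncate block sums. It expands each product $B_{l,j}B_{l,k}$ into a sum over pairs of coordinates and applies the coordinate-level covariance inequality for products, Proposition~\ref{covariance-products}, which under $\theta>4$ and the sub-exponential moment hypothesis yields $C_{n,r,4}=O(r^{-3})$. This gives, via a Marcinkiewicz--Zygmund-type count, $\mathrm{Var}(U_{l,jk})\le C_4 b_n^{2}$ for the diagonal and a summable bound $4b_n^3\sum_v C_{n,(r-1)b_n+v,4}$ for the off-diagonal terms with $r\ge 2$ (the case $r=1$ is handled separately by the same $C_4 b_n^2$ bound). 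The final Chebyshev rate is $Cb_n/(n\epsilon^2)$, so only $b_n=o(n)$ is needed.

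Your block-level truncation route has a real gap. You invoke a sub-exponential norm for $B_{l,j}$ of order $\sqrt{b_n}\log b_n$, but nothing in the paper (or in the standing assumptions) delivers sub-exponential \emph{concentration} for block sums under $\theta$-dependence; Proposition~\ref{2-plus-delta-moment-bound} only gives a $(2+\delta)$-th moment bound $\|S_{n,j}\|_{2+\delta}\le C\sqrt{n}$. Coordinate-wise sub-exponential tails plus a polynomial $\theta$-rate do not automatically propagate to a Bernstein-type inequality for dependent sums, and without such a tail bound the truncation remainder in your Lipschitz-extension step is uncontrolled. A secondary issue: your stated diagonal rate $b_n^{2}/n$ would force $b_n=o(\sqrt{n})$ rather than the claimed $b_n=o(n)$; the paper's coordinate-level approach gives the correct $b_n/n$. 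The fix is to abandon the block-sum truncation and instead use Proposition~\ref{covariance-products} directly on fourth-order products of the original coordinates.
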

\begin{proof}
By triangle inequality, we have that, for $Z_n \sim N(0,\Sigma_n)$:
\begin{align} 
|\mathbb{E}[f(W_n^Y) \ | \ Y_1^n ] - \mathbb{E}[f(S_n^X)]| \leq |\mathbb{E}[f(W_n^Y) \ | \ Y_1^n ] - \mathbb{E}[f(Z_n)]| + | \mathbb{E}[f(S_n^X) - f(Z_n)]| 
\end{align}
Under our assumptions, the second term converges to $0$ by the non-stationary central limit theorem (Theorem \ref{non-stationary-clt}). Therefore, bootstrap consistency will follow if we show that:
\begin{align}
\mathcal{L}\left(W_n^Y \ | \ Y_1^n \right) {\overset{wa(P)} \iff }  \mathcal{L}\left(Z_n \right) 
\end{align}
Let $X_i = Y_i - \mathbb{E}[Y_i]$, $\bar{X}_n = \frac{1}{n} \sum_{i=1}^n X_i$ and $C_l = \sum_{i \in \mathcal{K}_l} X_i$.  For convenience assume $n = m_n b_n$. Notice that $\mathcal{L}(W_n^Y \ | \ Y_1^n) = N(0, \Sigma_n^W)$, where $\Sigma_n^W$ is given by:
\begin{align}
\begin{split}
\Sigma_n^W &= \frac{1}{n}\sum_{l=1}^{m} \left[\sum_{i \in \mathcal{K}_l} (Y_i - \bar{Y}_n ) \sum_{i \in \mathcal{K}_l} (Y_i - \bar{Y}_n )^T\right]
\\ &= \frac{1}{n} \sum_{l=1}^{m}  \left[\sum_{i \in \mathcal{K}_l} (X_i - \bar{X}_n ) \sum_{i \in \mathcal{K}_l} (X_i - \bar{X}_n )^T\right]
\\ &= \frac{1}{n} \sum_{l=1}^m C_l C_l^T  - b_n \bar{X}_n \bar{X}_n^T
\end{split}
\end{align}
Since both are mean 0 Gaussian random variables, by the conditional version of Proposition \ref{gaussian-characteristic-difference-multivariate}, it suffices to show that $||\Sigma_n^W - \Sigma_n||_\infty \xrightarrow{P} 0$.  Now, by Proposition \ref{theta-dependent-covariance-lemma}, our assumption on the $\theta$-coefficient implies that:
\begin{align}
c^\prime = \sup_{i \in \mathbb{N}} \max_{1 \leq j \leq d} \mathrm{Var}(X_{ij}) + 2 \sum_{l \in \mathbb{N}} \sup_{i \in \mathbb{N}} \max_{j,k} \left |\text{Cov}\left(X_{ij},X_{(i+l)k} \right) \right| < \infty
\end{align}
 Therefore, since $b_n \rightarrow \infty$ and $b_n = o(n)$, by Lemma \ref{multivariate-big-block-lemma}, we have that $||\frac{1}{n}\sum_{l=1}^m\mathbb{E}[C_l C_l^T] - \Sigma_n||_\infty \rightarrow 0$. It follows that we can further reduce the problem to showing, for any $j,k \in\ \{1, \ldots, d \}$ that $(\frac{1}{n}\sum_{l=1}^m (C_l C_l^T - \mathbb{E}[C_l C_l^T]))_{jk} \xrightarrow{P} 0 $ and $b_n (\bar{X}_n \bar{X}_n^T)_{jk} \xrightarrow{P} 0 $.  We will begin by showing the latter.  

Since $\sqrt{n} \bar{X}_n  = O_P(1)$ by central limit theorem (Theorem \ref{multivariate-clt}) and $b_n = o(n)$, it follows that $\sqrt{b_n} \bar{X}_n = o_P(1)$.  Therefore, by Continuous Mapping Theorem, it follows that this term converges to $0$ in probability.



Now, let $U_{l,jk} =  [C_l C_l^T]_{jk}$.
Our strategy now is to use Chebychev's inequality and bound $\text{Var}(\frac{1}{n}\sum_{l=1}^{m+1} U_{ljk})$.  To this end, define the terms:
\begin{align}
\mathbf{A}_n = \max_{1 \leq l \leq m_n} \mathrm{Var}(U_{l, jk})  \ \ \text{and} \ \ \mathbf{B}_n =2 \ \sum_{r=1}^{m_n}  \max_{1 \leq l \leq m_n} \left |\text{Cov}\left(U_{l,jk},U_{(l+r),jk} \right) \right|  
\end{align}
Although it is not explicit in the notation above, note that $U_{l,jk}$, which is a functional of the blocked sum, changes with $n$. 
We will begin by bounding $\mathbf{A}_n$.  Notice that:
\begin{align}
\begin{split}
\label{fourth-order-moment-bound}
\mathbf{A}_n &\leq  \max_{1 \leq l \leq m_n} \mathbb{E} \left[ \sum_{ (s_1,t_1), \ldots, (s_4,t_4) \in \mathcal{S}_{n,l}} \prod_{u=1}^4 X_{s_u, t_u} \right] 
\end{split}
\end{align}
where $\mathcal{S}_{n,l}$ is the set of vectors of length 4 consisting of ordered pairs $(s_u, t_u)$ such that $\ s_1 \leq \ldots \leq s_4$, $\{s_1, \ldots, s_4\} \subset \mathcal{K}_l$, and $|\{u \ | \ t_u =j\}| =|\{u \ | \ t_u =k\}| =2$.
Following Theorem 4.1 of \citet{weak-dependence}, we may bound this term using an analog of the Marcinkiewicz-Zygmund inequality. Define the term:
\begin{align}
C_{n,r,4} = \max_{(s_1,t_1), \ldots, (s_4,t_4) \in \mathcal{S}_{n}} \left| \mathrm{Cov}(X_{s_1,t_1} \cdots X_{s_q,t_q}, X_{s_{q+1},t_{q+1}} \cdots X_{s_4,t_4} ) \right|
\end{align}
where $\mathcal{S}_n$ is set satisfying $\{s_1, \ldots, s_4 \}$ such that $ s_1 \leq \cdots \leq s_4 \leq n$ and $q$ and $r$ satisfy $s_{q+1} - s_{q} \geq r$ and $q \in \{1, 2, 3 \}$ and $t_1,\ldots, t_4 \in \{1,\ldots , d\}$.  By Proposition \ref{covariance-products}, we certainly have that $C_{n,r,4} = O(r^{-3})$, and analogous to \citet{weak-dependence}, we may conclude, for some $C_4$ not depending on $n$, that $\mathbf{A}_n \leq C_4 b_n^{2}$.

For $\mathbf{B}_n$, we will bound the cases $r=1$  and $2 \leq r \leq m_n$ separately.  For $r=1$, we will bound $\mathbb{E}[U_{l,jk}U_{l+1,jk}]$ and $\mathbb{E}[U_{l,jk}] \mathbb{E}[U_{(l+1),jk}]$ separately.  For the former, notice that:
\begin{align}
\max_{1 \leq l \leq m_n}  \mathbb{E}[U_{l,jk},U_{(l+1),jk}] =  \max_{1 \leq l \leq m_n}  \mathbb{E} \left[ \sum_{ (s_1,t_1), \ldots, (s_4,t_4) \in \mathcal{S}_{n,l,1}} \prod_{u=1}^4 X_{s_u, t_u} \right] 
\end{align}
where $\mathcal{S}_{n,l,1}$ is the set of vectors of length 4 consisting of ordered pairs $(s_u, t_u)$ such that $\ s_1 \leq \ldots \leq s_4$, $\{s_1, s_2\} \subset \mathcal{K}_l$, $\{s_3, s_4\} \subset \mathcal{K}_{l+1}$, and $\{t_1, t_2 \} = \{t_3, t_4 \} = \{j,k \}$.  We may bound this term with $C_4 b_n^{2}$ analogous to $\mathbf{A}_n$.  

For $\mathbb{E}[U_{l,jk}] \ \mathbb{E}[U_{(l+r),jk}]$, notice that:
\begin{align}
\left| \ \mathbb{E}[U_{l,jk}] \ \mathbb{E}[U_{(l+r),jk}] \ \right| & \leq \left( b_n c^{\prime} \right)^2
\end{align}

Now we will derive a bound for the sum of the terms greater than 1.  Let $\alpha_r = (r-1)b_n$.  Observe that, for $r \geq 2$:
\begin{align} 
\max_{1 \leq l \leq m_n} \left|\mathrm{Cov}(U_{l,jk},U_{(l+r),jk})\right| \leq 4 b_n^3 \sum_{v=1}^{b_n} C_{n,(\alpha_r+v),4} 
\end{align}
Now, since $C_{n,r,4} = O(r^{-3})$ and $\sum_{x=1}^\infty \frac{1}{x^3}$ is summable, we have that, for some finite $C$ and $C^\prime$:
\begin{align}
\begin{split}
m_n \mathbf{B_n} &\leq 2 m_n C_4 b_n^2  + 2m_n\left( b_n c^{\prime} \right)^2 + 4 m_n b_n^4 \sum_{l=1}^{m_n-1}C (lb_n)^{-3} 
\\ &\leq 2 C_4 n b_n + 2\left(c^{\prime}\right)^2 n b_n +  C^\prime n
\end{split}
\end{align}

Now, applying Chebychev's inequality, we have that, for some $C < \infty$:
\begin{align}
\begin{split}
P\left(\left|\frac{1}{n}\sum_{l=1}^{m} (U_{ljk}- \mathbb{E}[U_{l,jk}]) \right| > \epsilon \right) &\leq \frac{ \text{Var}\left(\sum\limits_{l=1}^{m} U_{ljk}\right)}{(n \epsilon)^2}
\\ & \leq \frac{m_n(\mathbf{A}_n + \mathbf{B}_n)}{(n \epsilon)^2}
\\ & \leq \frac{C b_n}{n \epsilon^2}
\end{split}
\end{align}
 Choosing $b_n = o(n)$, we have that both terms converge to $0$ for all $\epsilon > 0$ and the result follows.  
\end{proof}
\begin{remark}
\label{non-stationary-mean}
An estimate of the mean function (trend) is necessary to estimate the variance with the bootstrap.  Here, the assumption of constant mean allows the use of the sample mean to estimate the mean function.  It would be an interesting future endeavor to study the effects of trend estimation on bootstrap validity under different assumptions.
\end{remark}

\begin{remark}
A similar theorem is possible for $\alpha$-mixing sequences.  One may replace the covariance inequality and non-stationary central limit theorem with the appropriate counterparts; however, it may be possible to impose weaker tail assumptions.  We do not pursue this direction here.  
\end{remark}

   \subsection{Sample Splitting and the Bootstrap for Regression Coefficients}
 We now put together results from the previous sections to derive conditions under which confidence intervals produced by the block multiplier bootstrap after sample splitting provide valid inference for the regression coefficient under a $\tau$-dependence condition.  For simplicity, suppose that a particular coefficient $\beta_{1,\hat{m},n} \in \mathbb{R}$ is of interest; this can easily be generalized to a confidence band or Bonferroni interval.  
In what follows, let $C_{\hat{m},n}^*$ correspond to the confidence set:
\begin{align}
C_{\hat{m},n}^* &= \left[ \hat{\beta}_{1,\hat{m},n} - z_{(1-\alpha)/2}\frac{\sigma_{\hat{m}}^*}{\sqrt{n}}, \ \hat{\beta}_{1,\hat{m},n} + z_{(1-\alpha)/2}\frac{\sigma_{\hat{m}}^*}{\sqrt{n}}   \right]
\end{align} 
where $z_{(1-\alpha)/2}$ is the $(1-\alpha)/2$-quantile of the Standard Normal and  $\sigma_{\hat{m}}^*$ is the standard deviation corresponding the bootstrapped regression coefficients.  In the regression coefficient case, the bootstrap distribution is approximated as follows.  Consider a large number $B$ of Monte Carlo iterations.  On the $j$th iteration of the bootstrap, generate $e_1, \ldots, e_{m+1} \sim N(0,1)$ and form the bootstrapped functional $\psi_{n, \hat{m}, j}^{*} = \psi_{n}^{(\hat{m})} + n^{-1/2} W_{n,j}^{(\hat{m})}$.  We may then construct a corresponding bootstrap realization by letting $\hat{\beta}_{1,\hat{m},n,j}^{*} = g^{(m)}(\psi_{n, \hat{m},j}^*)$.  The standard deviation $\sigma_{\hat{m}}^*$ may then be approximated by:
\begin{align}
\sigma_{\hat{m}}^* \approx \sqrt{\frac{1}{B} \sum_{j=1}^B \left( \sqrt{n}(\hat{\beta}_{1,\hat{m},n,j}^* - \hat{\beta}_{1,\hat{m},n})\right)^2}
\end{align} 
We are now ready to state the following theorem, which establishes validity of sample splitting together with the bootstrap.  The proof relies heavily on several results that we have established in previous theorems.    

\begin{theorem}[Valid Bootstrap Inference for Regression Coefficient after Sample Splitting]  Suppose $\{Y_{n,i}\}$ is a triangular array of random variables such that for each $n,$ $Y_{n,i}$ is a random variable in $\mathbb{R}^{p_n}$ and that the vector has constant mean. Consider a model selection procedure $\hat{m}$ that has the property $P(\hat{m} \in \mathcal{M}^*) \rightarrow 1$ for some $\mathcal{M}^*$ satisfying $|\mathcal{M}^*| < \infty$. For each $m \in \mathcal{M}^*$, suppose that $g^{(m)}(\cdot)$ is differentiable in a neighborhood of $\mathbb{E}[\psi_{n}^{(m)}]$.  Further, for each $m \in \mathcal{M}^*$, suppose that $\theta_r(Y_1^{2n}) = O(r^{-\theta})$ for some $\theta > 4$  the coordinates of $(Y_i^{(m)}, X_{i}^{(m)})$ are $\mathrm{subE}(K_1)$ random variables.
Then,
\begin{align}
P\left( \beta_{1,\hat{m},n} \in C_{\hat{m},n}^* \right) = 1 - \alpha - o(1)
\end{align} 
\end{theorem}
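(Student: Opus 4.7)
The plan is to establish that the studentized statistic $T_n := \sqrt{n}(\hat{\beta}_{1,\hat{m},n} - \beta_{1,\hat{m},n})/\sigma^*_{\hat{m}}$ weakly approaches $N(0,1)$ in probability conditional on $Y_1^n$, so that the unconditional CDF of $T_n$ converges to $\Phi$ by dominated convergence and the Portmanteau theorem, giving the coverage statement. As a first reduction, I would partition on $\{\hat{m} = m\}$ for $m \in \mathcal{M}^*$ and use $P(\hat{m} \notin \mathcal{M}^*) \to 0$, mirroring the opening of the proof of Theorem \ref{low-dim-theorem}; this freezes the random model and reduces the problem to coverage for each fixed $m$.

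For each fixed $m$, set $\sigma_{m,n}^2 := \nabla g^{(m)}_1(\mathbb{E}\psi_n^{(m)})^T \Sigma_n^{(m)} \nabla g^{(m)}_1(\mathbb{E}\psi_n^{(m)})$ with $\Sigma_n^{(m)} = \mathrm{Var}(\sqrt{n}\psi_n^{(m)})$ and $g^{(m)}_1$ the first coordinate of $g^{(m)}$. Two weak-approaching statements drive the argument. First, for the true distribution, sample splitting validity (Theorem \ref{sub-exponential-sample-splitting}) reduces the conditional law given $Y_1^n$ to the unconditional one; the non-stationary CLT applied to $\sqrt{n}(\psi_n^{(m)} - \mathbb{E}\psi_n^{(m)})$ together with the weakly approaching Delta method (Proposition \ref{delta-method}) at the deterministic center $\mathbb{E}\psi_n^{(m)}$ then yields $\mathcal{L}(\sqrt{n}(\hat{\beta}_{1,m,n} - \beta_{1,m,n}) \mid Y_1^n) \overset{wa(P)}{\iff} N(0,\sigma_{m,n}^2)$. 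Second, for the bootstrap distribution, Theorem \ref{bootstrap-validity-tau-dependence} gives $\mathcal{L}(W_n^{(m)} \mid Y_1^{2n}) \overset{wa(P)}{\iff} \mathcal{L}(\sqrt{n}(\psi_n^{(m)} - \mathbb{E}\psi_n^{(m)}))$, and a second application of the conditional Delta method at the $\sigma(Y_1^{2n})$-measurable center $\psi_n^{(m)}$ — which lies in a fixed closed ball around $\mathbb{E}\psi_n^{(m)}$ with probability tending to one, making the uniform Taylor remainder bound valid — yields $\mathcal{L}(\sqrt{n}(\hat{\beta}^*_{1,m,n} - \hat{\beta}_{1,m,n}) \mid Y_1^{2n}) \overset{wa(P)}{\iff} N(0,\sigma_{m,n}^2)$.

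Next I would verify that $\sigma^*_m / \sigma_{m,n} \xrightarrow{P} 1$. Since $\mathcal{L}(W_n^{(m)} \mid Y_1^{2n}) = N(0,\Sigma_n^W)$ exactly by construction of the block multiplier, the linearized Delta-method representation gives $(\sigma^*_m)^2 = \nabla g^{(m)}_1(\psi_n^{(m)})^T \Sigma_n^W \nabla g^{(m)}_1(\psi_n^{(m)}) + o_P(1)$, and the identity $\Sigma_n^W - \Sigma_n^{(m)} \xrightarrow{P} 0$ is already established inside the proof of Theorem \ref{bootstrap-validity-tau-dependence}; continuity of $\nabla g^{(m)}$ together with consistency of $\psi_n^{(m)}$ disposes of the gradient factor. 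A Slutsky-type argument for weakly approaching sequences (Proposition \ref{conditional-slutksy}) then gives $T_n \overset{wa(P)}{\iff} N(0,1)$, so $P(|T_n| \leq z_{(1-\alpha)/2}) \to 1-\alpha$.

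The main obstacle is reconciling the $\theta$-dependence hypothesis with the $\tau$-decay condition required by Theorem \ref{sub-exponential-sample-splitting}: since $\theta \leq \tau$ goes only one way, a $\theta$-decay rate does not mechanically imply a $\tau$-decay rate. The fix is either to append a matching $\tau$-decay hypothesis explicitly or to leverage the sub-exponential tails — since $\theta$ and $\tau$ differ only in the order of a supremum over function classes and an $L_1$-norm over the conditioning — to convert the given $\theta$-bound into a $\tau$-bound via truncation, at the cost of a slowly growing factor. A secondary concern is ensuring $\sigma_{m,n}$ stays bounded below uniformly in $n$, which presumably requires an eigenvalue lower bound on $\mathbb{E}[X_i^{(m)} X_i^{(m)T}]$ and on $\Sigma_n^{(m)}$ to prevent a degenerate Delta-method limit.
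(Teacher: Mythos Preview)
Your approach is essentially the paper's: reduce to fixed $m \in \mathcal{M}^*$, then show both the bootstrap law $\mathcal{L}(\sqrt{n}(\hat\beta^*_{1,m,n}-\hat\beta_{1,m,n})\mid Y_1^{2n})$ and the sampling law $\mathcal{L}(\sqrt{n}(\hat\beta_{1,m,n}-\beta_{1,m,n})\mid Y_1^n)$ weakly approach the same Gaussian $N(0,\nabla g^{(m)}_1(\mathbb E\psi_n^{(m)})^T\Sigma_n\,\nabla g^{(m)}_1(\mathbb E\psi_n^{(m)}))$, invoking Theorem~\ref{bootstrap-validity-tau-dependence} plus the conditional Delta method on one side and Theorem~\ref{sub-exponential-sample-splitting} plus the CLT and Delta method on the other. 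The paper packages the two halves via a triangle-inequality remark rather than your more explicit route through the studentized statistic $T_n$ and the ratio $\sigma^*_m/\sigma_{m,n}\to 1$; your version is a bit more complete on the final coverage step.

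Two small points. First, when you apply Theorem~\ref{bootstrap-validity-tau-dependence} you are applying it to the process $\{V_i^{(m)}\}$, not to $\{Y_i\}$; the paper inserts Proposition~\ref{theta-preservation} here to transfer the $\theta$-decay from $Y$ to the quadratic $V^{(m)}$ under the sub-exponential moment condition---you should make that step explicit. Second, your flagged obstacle about $\theta$ versus $\tau$ is a genuine gap in the hypotheses as stated, and the paper does not resolve it either: it simply cites Theorem~\ref{sub-exponential-sample-splitting} (whose condition T is a $\tau$-decay rate) while only assuming $\theta$-decay. Your proposed fix of appending an explicit $\tau$-rate hypothesis is the cleanest remedy; the truncation route you sketch does not obviously close the gap because the interchange of supremum and $L_1$-norm separating $\theta$ from $\tau$ is not recovered by moment control alone. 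Your secondary concern about a lower bound on $\sigma_{m,n}$ is also legitimate and likewise unaddressed in the paper.
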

\begin{proof}
We will establish that $\mathcal{L}(\sqrt{n}(\hat{\beta}_{1,\hat{m},n}^* - \hat{\beta}_{1,\hat{m},n}) \ | \ Y_1^{2n})$ weakly approaches $\widetilde{\mathcal{L}}(\sqrt{n}(\hat{\beta}_{1,\hat{m},n} - \beta_{1,\hat{m},n}) \ | \ Y_1^n)$ in probability; this is sufficient by triangle inequality and Theorem \ref{sub-exponential-sample-splitting}.  Again, by law of total probability, we have that:

 \begin{align}
\begin{split}
& \ \ \ P \left( d_{BL} \left[ \mathcal{L}\bigl(\sqrt{n}(\hat{\beta}_{1,\hat{m},n}^* - \hat{\beta}_{1,\hat{m},n} \ | \ Y_1^{2n}), \widetilde{\mathcal{L}}\bigl(\sqrt{n}(\hat{\beta}_{1,\hat{m},n} - \beta_{1,\hat{m},n}) \ | \ Y_1^n \bigr) \right] > \epsilon \right)
\\ & \leq \sum_{m \in \mathcal{M}^*}  P \left( d_{BL} \left[ \mathcal{L} \bigl(\sqrt{n}(\hat{\beta}_{1,m,n}^* - \hat{\beta}_{1,m,n} \ | \ Y_1^{2n} \bigr), \widetilde{\mathcal{L}}\bigl(\sqrt{n}(\hat{\beta}_{1,m,n} - \beta_{1,m,n})\bigr) \right] > \epsilon \right) 
\\ & \ \ \ \ \ \ + P(m \not\in \mathcal{M}^*)
\end{split}
 \end{align}
 Therefore it suffices to establish validity for each fixed $m \in \mathcal{M}^*$.  Now observe that:
\begin{align}
\begin{split}
  \mathcal{L}\bigl(\sqrt{n}(\hat{\beta}_{1,m,n}^* - \hat{\beta}_{1,m,n}) \ | \ Y_1^{2n} \bigr) & {\overset{wa(P)} \iff} N(0,\nabla g(\psi_n^{(m)})^T \Sigma_n) 
  \\ &{\overset{wa(P)} \iff} N(0,\nabla g(\mathbb{E}[\psi_n^{(m)}])^T \Sigma_n)
 \end{split}
\end{align}
where the first line follows due Theorem \ref{bootstrap-validity-tau-dependence}, Proposition \ref{theta-preservation}, the Conditional Delta Method (Theorem \ref{delta-method}), and the second line follows from the Continuous Mapping Theorem (Proposition \ref{continuous-mapping}), Lemma \ref{gaussian-characteristic-difference-multivariate}, and the fact that $\psi_n^{(m)} -\mathbb{E}[\psi_n^{(m)}] \xrightarrow{P} 0$.

 Now, by the Conditional Delta Method and the Central Limit Theorem (Theorem \ref{multivariate-clt}), we also have that:
\begin{align}
 \widetilde{\mathcal{L}}\bigl(\sqrt{n}(\hat{\beta}_{1,m,n} - \beta_{1,m,n}) \bigr) {\overset{wa} \iff}  N(0,\nabla g(\mathbb{E}[\psi_n^{(m)}])^T \Sigma_n) 
\end{align}  

\end{proof}




\section{Discussion}
We have shown that, under appropriate assumptions on dependence and estimator, sample splitting remains asymptotically valid for time series data.  The procedure we consider involves dividing the data into two blocks and using the first half for model selection and the second half for inference.  While it is inherently appealing that the procedure lacks a tuning parameter, a thorough simulation study will be needed to compare its properties to a procedure in which a buffer is introduced to further reduce dependence between the selection and inference sets.  
Our arguments suggest that, asymptotically, the two procedures have similar properties, but it is not clear how different they are in finite samples.  It is likely that there is an inference-estimation trade-off in which a buffer leads to less spillover effects from selection at the cost of less accurate estimation.  It would be reassuring to verify that the procedure of leaving no buffer between the two datasets is comparable to a more complicated procedure in a variety of simulation settings.

 In addition, while our notion of sample splitting validity is asymptotic, we believe that it will often be the case that the error incurred by sample splitting a weakly dependent sequence will be of smaller order than the error incurred by using asymptotic/bootstrap approximations for the sampling distribution.  At best, a Berry-Esseen type bound leads to a Normal approximation with error $O(1/\sqrt{n})$ in the Kolmogorov distance. For a central limit theorem or bootstrap result to apply, we impose a dependence condition that leads to the bounded Lipschitz distance between the unconditional and conditional distributions of a normalized sum being of smaller order for appropriate choices of $i_n$.  This, of course, is complicated by the deletion stability condition, but is something that we believe we can address in more detail in future work.   

We would also like to mention that, while we succeed in establishing validity of sample splitting under weak assumptions, in particular, without assuming stationarity, fixed covariates or correct model specification, we do so by giving up on estimating more ambitious targets.  In particular, our parameters are data-dependent and are only defined as a functional of an expectation on the inference set.  Depending on the application, it may be of interest to introduce additional assumptions as appropriate to ensure that the target parameter is one of interest.

We also succeed in establishing a non-stationary Central Limit Theorem for $\theta$-dependent sequences by extending \citep{doukan-winteberger-invariance-principle}.  Based on this result, we also establish validity of the Gaussian multiplier bootstrap in the non-stationary setting.  Our approach is based on the Dependent Lindeberg method of \citet{dependent-lindeberg-method}, but we are able to generalize their result to the non-stationary setting by exploiting a phenomenon regarding the variance of a normalized sum of weakly dependent variables.  In ongoing work, we are working to extend our analysis to the high-dimensional regime for the maximum of a random vector in the sense of \citet{cck-clt-bootstrap-high-dimension} under analogous dependence measures.     

\paragraph{Acknowledgments} The author would like to thank Sangwon Hyun and Michael Spece for helpful discussions.
\appendix
\section*{Appendices}
\addcontentsline{toc}{section}{Appendices}
\renewcommand{\thesubsection}{\Alph{subsection}}
\subsection{Properties of Weakly Approaching Sequences}
\label{weakly-approaching-properties}
For the reader's convenience, we collect some properties of weakly approaching sequences used in Section \ref{main-results}.  The first two propositions establish a transitivity property for tightness.  
\begin{proposition}[Tightness Preservation for Weakly Approaching Sequences, Lemma 5 in \citet{Belyaev-Sjostedt-de-Luna-weakly-approach}]
\label{tightness-preservation}
If $\{V_n\}_{n \geq 1}$ is tight and $\mathcal{L}(U_n) {\overset{wa} \iff } \mathcal{L}(V_n) $ as $n \rightarrow \infty$, then $\{U\}_{n \geq 1}$ is tight.  
\end{proposition}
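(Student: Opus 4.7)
The plan is to mirror the standard proof that weak convergence plus tightness of the limit implies tightness of the converging sequence, adapting it to the weakly approaching setting where there is no single limit distribution. The driving idea is that weak approach only controls expectations of bounded continuous functions, not probabilities of open or closed sets directly, so I will sandwich indicator functions of compact sets between bounded Lipschitz functions, transfer the estimate across via the weakly approaching hypothesis, and only then recover a probability statement on a slightly enlarged compact.

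First I would fix $\epsilon > 0$. By tightness of $\{V_n\}_{n \geq 1}$, there exists a compact set $K$ with $P(V_n \in K) \geq 1 - \epsilon/3$ for every $n$. Next, for a prescribed $\eta > 0$, I would construct a bounded continuous (indeed Lipschitz) function $f \colon \mathcal{U} \to [0,1]$ satisfying $f \equiv 1$ on $K$ and $f \equiv 0$ off the closed $\eta$-enlargement $K^\eta := \{x : d(x,K) \leq \eta\}$; such an $f$ exists by Urysohn's lemma applied in a metric space, e.g. $f(x) = \max(0, 1 - \eta^{-1} d(x,K))$. Since the paper works in $\mathbb{R}^{p_n}$ (finite-dimensional), $K^\eta$ is closed and bounded, hence compact, which is the feature that makes the argument go through cleanly.

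Then I would invoke the weakly approaching hypothesis with the bounded continuous test function $f$: there exists $N$ such that $|\Expect{f(U_n)} - \Expect{f(V_n)}| \leq \epsilon/3$ for all $n \geq N$. Combining with the definition of $f$,
\begin{align}
P(U_n \in K^\eta) \;\geq\; \Expect{f(U_n)} \;\geq\; \Expect{f(V_n)} - \tfrac{\epsilon}{3} \;\geq\; P(V_n \in K) - \tfrac{\epsilon}{3} \;\geq\; 1 - \tfrac{2\epsilon}{3},
\end{align}
for every $n \geq N$. For each of the finitely many indices $n < N$, the random variable $U_n$ is, on its own, a Borel random variable on the Polish space $\mathcal{U}$, and any single Borel probability measure on a Polish space is inner regular, so one can choose a compact set $K_n$ with $P(U_n \in K_n) \geq 1 - \epsilon$. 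Taking $K_\epsilon := K^\eta \cup \bigcup_{n < N} K_n$ gives a finite union of compacts, hence compact, and $P(U_n \in K_\epsilon) \geq 1 - \epsilon$ uniformly in $n$, which is the definition of tightness.

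The main technical subtlety, and the only place the argument is delicate, is the sandwich step: weak approach gives no direct control over $P(U_n \in K)$ versus $P(V_n \in K)$ (both involve discontinuous indicators), so one must lose a bit by passing to the enlargement $K^\eta$ and then rely on compactness of $K^\eta$ in the ambient space. In $\mathbb{R}^d$ this is automatic; in an abstract Polish setting one would instead prove tightness through a countable net of compacts and a diagonal argument to avoid using that closed enlargements are compact. Since the paper's applications stay in Euclidean space, the direct route above suffices.
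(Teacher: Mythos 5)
The paper does not actually prove this proposition: it is imported verbatim as Lemma 5 of \citet{Belyaev-Sjostedt-de-Luna-weakly-approach}, so there is no in-paper argument to compare against. Your proof is correct and is the standard one. The sandwich $\mathbbm{1}_K \leq f \leq \mathbbm{1}_{K^\eta}$ with the Lipschitz bump $f(x) = \max(0, 1 - \eta^{-1} d(x,K))$ correctly transfers the mass estimate from $\{V_n\}$ to $\{U_n\}$ on the enlarged set, since weak approach applies to any bounded continuous test function; the finitely many initial indices $n < N$ are correctly handled by Ulam's theorem (inner regularity of a single Borel probability measure on a Polish space), and a finite union of compacts is compact. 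Your own caveat is the one genuine pressure point: compactness of $K^\eta$ is automatic only in $\mathbb{R}^d$ (closed and bounded), whereas in a general Polish space the closed $\eta$-enlargement of a compact set need not be compact, and one would instead build the compact set for $\{U_n\}$ via a totally bounded exhaustion (finite $\eta_k$-nets for a sequence $\eta_k \downarrow 0$, intersected and closed). Since every invocation of this lemma in the paper concerns Euclidean-valued estimators, the direct route you take suffices for the paper's purposes, and no gap remains.
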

\begin{proposition}[Tightness Preservation for Conditionally weakly Approaching Sequences, Lemma 6 in \citet{Belyaev-Sjostedt-de-Luna-weakly-approach}]
If $\{Y_n\}_{n \geq 1}$ is tight and $\mathcal{L}(X_n \ | \ \mathbb{Z}_n) {\overset{wa(P)} \iff } \mathcal{L}(Y_n) $ as $n \rightarrow \infty$, then $\{X\}_{n \geq 1}$ is uniformly tight in probability and hence tight.  
\end{proposition}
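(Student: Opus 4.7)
The plan is to leverage the weakly approaching in probability hypothesis through a carefully chosen bounded Lipschitz test function built around a compact set provided by tightness of $\{Y_n\}$. For any $\epsilon, \delta > 0$, the goal is to exhibit a compact $K^*$ such that, with $P$-probability approaching $1$, one has $P(X_n \notin K^* \mid \mathbb{Z}_n) < \epsilon$; tightness of $\{X_n\}$ will then fall out by integration.

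First, I would use tightness of $\{Y_n\}$ to pick a compact $K = K_{\epsilon,\delta}$ with $\sup_n P(Y_n \notin K) < \epsilon\delta/4$, and introduce the $1$-Lipschitz test function $f(x) = \min(d(x, K), 1)$. This function is bounded in $[0,1]$, satisfies $f(y) \leq \mathbf{1}_{K^c}(y)$ pointwise so that $\mathbb{E}[f(Y_n)] < \epsilon\delta/4$, and also dominates $\mathbf{1}_{N_1(K)^c}(y)$, where $N_1(K)$ is the open $1$-neighborhood of $K$. Because $f$ is bounded and continuous, the weakly approaching in probability assumption gives
\begin{align*}
\mathbb{E}[f(X_n) \mid \mathbb{Z}_n] - \mathbb{E}[f(Y_n)] \xrightarrow{P} 0,
\end{align*}
and since $\mathbb{E}[f(Y_n)] < \epsilon\delta/4$ uniformly in $n$, Markov's inequality yields $P\bigl(\mathbb{E}[f(X_n) \mid \mathbb{Z}_n] > \epsilon/2\bigr) < \delta$ for all sufficiently large $n$.

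The next step is to transfer this Lipschitz bound to a probability statement: the inequality $\mathbf{1}_{N_1(K)^c} \leq f$ yields $P(X_n \notin N_1(K) \mid \mathbb{Z}_n) \leq \mathbb{E}[f(X_n) \mid \mathbb{Z}_n]$. Choosing $K^*$ to be a compact set containing $N_1(K)$---immediate in $\mathbb{R}^{p}$ by Heine--Borel, or in any locally compact Polish space after possibly shrinking the radius so that the closed neighborhood is compact---one obtains $P(X_n \notin K^* \mid \mathbb{Z}_n) < \epsilon/2$ with outer probability at least $1-\delta$ for $n$ large, which is exactly uniform tightness in probability at level $(\epsilon, \delta)$. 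Taking expectations gives $P(X_n \notin K^*) \leq \epsilon/2 + \delta$ for $n$ large, and the finitely many initial indices can be absorbed by enlarging $K^*$: each $X_n$ individually is tight by Ulam's theorem on a Polish space, so a finite union of compacts yields a single compact that works for all $n$.

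The main obstacle is the compactness upgrade from $\mathbb{E}[f(X_n)\mid\mathbb{Z}_n]$ to $P(X_n\notin K^*\mid\mathbb{Z}_n)$. In $\mathbb{R}^{p}$, the setting of the paper's applications, this is cost-free; but in a general Polish space the closed $1$-neighborhood of a compact set need not be compact, so some form of local compactness is needed. One work-around without local compactness is to apply the construction along a nested family of compacts $K_m$ with $\sup_n P(Y_n \notin K_m) \to 0$ and the associated Lipschitz functions $f_m$, then diagonalize over $m$ to extract the desired uniform-in-probability control; the key point either way is that tightness of $\{Y_n\}$ is transferred through bounded continuous test functions, which is precisely what the weakly approaching in probability hypothesis provides.
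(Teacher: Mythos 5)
The paper does not prove this proposition; it is imported verbatim as Lemma~6 of \citet{Belyaev-Sjostedt-de-Luna-weakly-approach}, so there is no in-paper argument to compare against. Judged on its own terms, your proof is correct in the finite-dimensional setting that the paper actually uses, and it is the standard route: sandwich indicators of a compact set between bounded Lipschitz test functions, push the bound through the weakly-approaching-in-probability hypothesis, and integrate out $\mathbb{Z}_n$ to recover unconditional tightness. Two small remarks. First, the appeal to Markov's inequality in the middle step is unnecessary: writing $\mathbb{E}[f(X_n)\mid\mathbb{Z}_n]\leq \bigl|\mathbb{E}[f(X_n)\mid\mathbb{Z}_n]-\mathbb{E}[f(Y_n)]\bigr|+\epsilon\delta/4$, the convergence in probability of the first term already gives $P\bigl(\mathbb{E}[f(X_n)\mid\mathbb{Z}_n]>\epsilon/2\bigr)<\delta$ for large $n$ (alternatively, Markov applied to the unconditional expectation works after invoking bounded convergence, but you should say which of the two you mean). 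Second, you are right to flag that the closed $1$-neighborhood of a compact set need not be compact outside locally compact spaces; the sketched fix is the correct one, but to make it rigorous in a general Polish space you would replace the single neighborhood by finite covers of $K_m$ by balls of radius $1/m$ and build a closed totally bounded intersection, handling the fact that each level $m$ only controls $n$ beyond some $N_m$ via the Ulam-theorem absorption of initial indices that you already use at the end. Since every application in this paper lives in $\mathbb{R}^{p^{(m)}}$, the Heine--Borel version of your argument suffices as written.
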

Below we will state Slutsky's Theorem, both in the unconditional and conditional cases.  The statements below are a combination of results from \citet{Belyaev-Sjostedt-de-Luna-weakly-approach} and \citet{Belyaev-Sjostedt-de-Luna-properties-weakly-approach} that are packaged together for conciseness.    
\begin{proposition}[Weakly Approaching Slutsky's Theorem]\label{slutksy}
Let $\{U_n, V_n, X_n \}_{n \geq 1}$ be a sequence of random variables defined on the same probability space.  Let $\{c_n\}_{n \in \mathbb{N}}$ be a uniformly bounded sequence of constants.  Further assume that $\{V_n\}_{n \geq 1}$ is tight, $X_n \xrightarrow{P} c_n$ as $n \rightarrow \infty$, and that $\mathcal{L}(U_n) {\overset{wa} \iff } \mathcal{L}(V_n) $ as $n \rightarrow \infty$.  Then, the following hold:
\begin{enumerate}
\item[(i).] $\mathcal{L}(U_n + X_n) {\overset{wa} \iff } \mathcal{L}(V_n + c_n)$ as $n \rightarrow \infty$
\item[(ii).] $\mathcal{L}(X_n^T U_n) {\overset{wa} \iff } \mathcal{L}(c_n^T V_n)$ as $n \rightarrow \infty$
\end{enumerate}
\end{proposition}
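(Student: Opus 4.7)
The plan is to prove both parts by transferring to the bounded Lipschitz metric via Proposition \ref{metricizing-weakly-approaching}, then using the standard triangle inequality that underlies classical Slutsky's theorem. Since $\{V_n\}_{n\geq 1}$ is tight and $\mathcal{L}(U_n)\overset{wa}{\iff}\mathcal{L}(V_n)$, Proposition \ref{tightness-preservation} gives that $\{U_n\}$ is also tight, and the equivalence gives $d_{BL}(\mathcal{L}(U_n),\mathcal{L}(V_n))\to 0$. Tightness of $\{V_n+c_n\}$ and $\{c_n^T V_n\}$ follows from tightness of $\{V_n\}$ together with uniform boundedness of $c_n$, so once I establish convergence in $d_{BL}$, Proposition \ref{metricizing-weakly-approaching} closes the loop and yields weak approach.

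For part (i), I would bound
\begin{align*}
d_{BL}(\mathcal{L}(U_n+X_n),\mathcal{L}(V_n+c_n)) \leq d_{BL}(\mathcal{L}(U_n+X_n),\mathcal{L}(U_n+c_n)) + d_{BL}(\mathcal{L}(U_n+c_n),\mathcal{L}(V_n+c_n)).
\end{align*}
The first (coupling) term is handled by using that for any $f\in BL_1$, $|f(U_n+X_n)-f(U_n+c_n)|\leq \|X_n-c_n\|\wedge 2$; taking sup over $f$ and then expectation, dominated convergence together with $X_n\xrightarrow{P}c_n$ sends this to $0$. For the second term, set $g_n(x)=f(x+c_n)$; then $\|g_n\|_\infty=\|f\|_\infty$ and $\|g_n\|_L=\|f\|_L$, so $g_n\in BL_1$ whenever $f\in BL_1$. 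Hence the second term is dominated by $d_{BL}(\mathcal{L}(U_n),\mathcal{L}(V_n))\to 0$. Uniform boundedness of $c_n$ is not actually needed here, but it will be essential in part (ii).

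For part (ii), the same splitting yields
\begin{align*}
d_{BL}(\mathcal{L}(X_n^T U_n),\mathcal{L}(c_n^T V_n)) \leq d_{BL}(\mathcal{L}(X_n^T U_n),\mathcal{L}(c_n^T U_n)) + d_{BL}(\mathcal{L}(c_n^T U_n),\mathcal{L}(c_n^T V_n)).
\end{align*}
For the coupling term, I bound $|f(X_n^T U_n)-f(c_n^T U_n)|\leq (\|X_n-c_n\|\cdot\|U_n\|)\wedge 2$; since $\|U_n\|$ is tight (here I use Proposition \ref{tightness-preservation}) and $\|X_n-c_n\|\xrightarrow{P}0$, the product tends to $0$ in probability, and dominated convergence again finishes this piece. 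For the weakly-approaching term, let $h_n(x)=f(c_n^T x)$. This function satisfies $\|h_n\|_\infty\leq \|f\|_\infty$ and $\|h_n\|_L\leq \|f\|_L\cdot\|c_n\|$; the uniform bound $\sup_n\|c_n\|\leq C<\infty$ ensures $h_n\in BL_{\max(1,C)}$, so $h_n/\max(1,C)\in BL_1$ and the term is dominated by $\max(1,C)\cdot d_{BL}(\mathcal{L}(U_n),\mathcal{L}(V_n))\to 0$.

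The main obstacle is conceptual rather than technical: in the weakly-approaching half of each split, the ``test function'' $g_n$ or $h_n$ depends on $n$, so one cannot directly invoke weakly-approaching convergence for a single fixed continuous function. The resolution is to work with $d_{BL}$ as a supremum over a function class and observe that, thanks to the uniform boundedness of $\{c_n\}$, the translates/linear reparametrizations of a $BL_1$ function remain in a uniformly scaled $BL$ ball; convergence of the sup (i.e.\ of the metric itself, guaranteed by Proposition \ref{metricizing-weakly-approaching}) then dominates the supremum over these varying $g_n$ or $h_n$ uniformly in $n$.
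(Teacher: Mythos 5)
Your argument is correct. Note, however, that the paper does not actually prove Proposition \ref{slutksy}: it is stated as a packaging of results imported from \citet{Belyaev-Sjostedt-de-Luna-weakly-approach} and \citet{Belyaev-Sjostedt-de-Luna-properties-weakly-approach}, whose original arguments run through the L\'{e}vy metric and characteristic-function continuity theorems. Your proof is a self-contained alternative that stays entirely inside the paper's own toolkit: you pass to $d_{BL}$ via Proposition \ref{metricizing-weakly-approaching}, split each distance into a coupling term (killed by $X_n \xrightarrow{P} c_n$, tightness of $\{U_n\}$ from Proposition \ref{tightness-preservation}, and bounded convergence) and a reparametrization term (killed because translates $f(\cdot + c_n)$ stay in $BL_1$ and the maps $f(c_n^T \cdot)$ stay in a fixed multiple of $BL_1$ thanks to $\sup_n \norm{c_n} < \infty$), and you correctly identify and resolve the one genuine subtlety, namely that the test functions vary with $n$ so one must control the supremum over the whole class rather than apply the weakly-approaching definition to a single fixed $f$. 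You also correctly invoke uniform boundedness of $\{c_n\}$ where it is genuinely needed: for tightness of $\{V_n + c_n\}$ and $\{c_n^T V_n\}$ (required to travel back from $d_{BL} \rightarrow 0$ to the weakly-approaching statement) and for the uniform Lipschitz bound in part (ii). The only cosmetic caveat is that in part (i) the Lipschitz constant of $x \mapsto c_n^T x$ should be measured in the dual of the sup-norm the paper fixes on the domain, but in fixed dimension all norms are equivalent, so nothing is lost. The metric-space route has the added benefit of making the quantitative dependence on $\sup_n \norm{c_n}$ explicit, which the characteristic-function route obscures.
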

\begin{proposition}[Weakly Approaching Conditional Slutsky's Theorem]\label{conditional-slutksy}
Let $\{ U_n, V_n, X_n, \mathbb{W}_n \}_{n \geq 1}$ be defined on the same probability space and let $\{c_n\}_{n \in \mathbb{N}}$ be a uniformly bounded sequence of constants.  Further assume that $\{V_n\}_{n \geq 1}$ is tight, $X_n \xrightarrow{P} c_n$ as $n \rightarrow \infty$, and that $\mathcal{L}(U_n \ | \ \mathbb{W}_n) {\overset{wa(P)} \iff } \mathcal{L}(V_n) $ as $n \rightarrow \infty$.  Then, the following hold:
\begin{itemize}
\item[(i).] $\mathcal{L}(U_n + X_n \ | \ \mathbb{W}_n) {\overset{wa(P)} \iff } \mathcal{L}(V_n + c_n)$ as $n \rightarrow \infty$
\item[(ii).] $\mathcal{L}( X_n^TU_n \ | \ \mathbb{W}_n) {\overset{wa(P)} \iff } \mathcal{L}(c_n^TV_n)$ as $n \rightarrow \infty$
\end{itemize}  
\end{proposition}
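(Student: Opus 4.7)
The plan is to prove parts (i) and (ii) directly from the definition of weakly approaching in probability via a two-term triangle-inequality split. For (i), fix a bounded continuous $f$ and write
\[
\mathbb{E}[f(U_n + X_n) \mid \mathbb{W}_n] - \mathbb{E}[f(V_n + c_n) \mid \mathbb{W}_n] = \mathrm{I}_n + \mathrm{II}_n,
\]
where $\mathrm{I}_n := \mathbb{E}[f(U_n + X_n) - f(U_n + c_n) \mid \mathbb{W}_n]$ and $\mathrm{II}_n := \mathbb{E}[f(U_n + c_n) \mid \mathbb{W}_n] - \mathbb{E}[f(V_n + c_n) \mid \mathbb{W}_n]$. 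It suffices to show $\mathrm{I}_n \xrightarrow{P} 0$ and $\mathrm{II}_n \xrightarrow{P} 0$.

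For $\mathrm{I}_n$, the first step is to invoke the tightness preservation proposition already stated: since $\{V_n\}$ is tight and $\mathcal{L}(U_n \mid \mathbb{W}_n) \overset{wa(P)}{\iff} \mathcal{L}(V_n)$, the sequence $\{U_n\}$ is uniformly tight in probability. Fix $\varepsilon > 0$, let $K_\varepsilon$ be a compact set with $P(U_n \notin K_\varepsilon) < \varepsilon$ uniformly in $n$, and let $B$ be a closed ball containing every $c_n$ (possible by uniform boundedness). On the event $\{U_n \in K_\varepsilon\} \cap \{\|X_n - c_n\| < \delta\}$, both $U_n + X_n$ and $U_n + c_n$ lie in the compact set $K_\varepsilon + B + \overline{B_\delta(0)}$, where $f$ is uniformly continuous, so $|f(U_n + X_n) - f(U_n + c_n)| \leq \omega_f(\delta)$ with $\omega_f$ the modulus of continuity on that compact. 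On the complement one uses the trivial bound $2\|f\|_\infty$. Since $X_n - c_n \xrightarrow{P} 0$, letting $\delta \downarrow 0$ after $\varepsilon \downarrow 0$ and applying Markov's inequality to the (nonnegative) conditional expectation gives $\mathrm{I}_n \xrightarrow{P} 0$.

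The main obstacle is $\mathrm{II}_n$, whose test function $g_n(u) := f(u + c_n)$ depends on $n$, so the hypothesis $\mathcal{L}(U_n \mid \mathbb{W}_n) \overset{wa(P)}{\iff} \mathcal{L}(V_n)$, which is phrased in terms of a single fixed bounded continuous function, cannot be invoked off the shelf. I will handle this with the standard subsequence device. Given any subsequence $\{n_k\}$, Bolzano--Weierstrass applied to the bounded sequence $\{c_{n_k}\}$ yields a further subsequence $\{n_{k_l}\}$ with $c_{n_{k_l}} \to c_\ast$ for some finite $c_\ast$. The limiting function $g_\ast(u) := f(u + c_\ast)$ is bounded continuous, and on any fixed compact set $g_{n_{k_l}} \to g_\ast$ uniformly by uniform continuity of $f$ on compacts. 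Combined with uniform tightness of $\{V_n\}$ and the preserved uniform tightness in probability of $\{U_n\}$, this uniform-on-compacts convergence lets us replace $g_{n_{k_l}}$ by $g_\ast$ in both conditional expectations with error that vanishes in probability. Applying the definition of $\overset{wa(P)}{\iff}$ to the now fixed $g_\ast$ yields $\mathrm{II}_{n_{k_l}} \xrightarrow{P} 0$; since every subsequence admits such a sub-subsequence converging in probability to $0$, the whole sequence $\mathrm{II}_n \xrightarrow{P} 0$.

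Part (ii) follows by the same two-step scheme with the replacement map $(x,u) \mapsto x^T u$ in place of addition. For the analog of $\mathrm{I}_n$, one bounds $|X_n^T U_n - c_n^T U_n| \leq \|X_n - c_n\| \cdot \|U_n\|$, which is $o_P(1)$ on the high-probability compact set capturing $U_n$, and then applies uniform continuity of $f$ on the image compact to control the replacement error. For the analog of $\mathrm{II}_n$, the identical subsequence device is carried out with limit function $g_\ast(u) := f(c_\ast^T u)$, which is again bounded continuous, completing the proof.
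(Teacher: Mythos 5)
Your proof is correct, but note that the paper itself offers no proof of this proposition: it is stated as a packaging of results imported from \citet{Belyaev-Sjostedt-de-Luna-weakly-approach} and \citet{Belyaev-Sjostedt-de-Luna-properties-weakly-approach}, so there is no in-paper argument to compare against. Your self-contained argument is sound and follows the natural route: the two-term split isolates (a) the substitution of $c_n$ for $X_n$, handled by the tightness of $\{U_n\}$ inherited via the tightness-preservation lemma together with uniform continuity of $f$ on compacts and a Markov bound on the conditional probability of the bad event, and (b) the application of the $\overset{wa(P)}{\iff}$ hypothesis to the $n$-dependent test function $g_n(u)=f(u+c_n)$, which you correctly recognize as the real obstacle and resolve with the Bolzano--Weierstrass subsequence device plus uniform-on-compacts replacement of $g_n$ by $g_\ast$; this is legitimate because convergence in probability is metrizable, so it suffices that every subsequence admits a further subsequence along which the quantity tends to zero. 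Two small points: in your $\mathrm{II}_n$ you write $\mathbb{E}[f(V_n+c_n)\mid\mathbb{W}_n]$, whereas the proposition compares against the unconditional law $\mathcal{L}(V_n+c_n)$, so the second term should be the unconditional expectation $\mathbb{E}[f(V_n+c_n)]$ (this only simplifies matters, since that term is deterministic); and the order of limits should be $\varepsilon$ fixed first, then $\delta\downarrow 0$, then $\varepsilon\downarrow 0$, rather than the reverse order you state, though the bound $\omega_f(\delta)+2\Vert f\Vert_\infty(\varepsilon+o(1))$ you derive makes either reading work.
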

We will state the following continuity theorem(s) for characteristic functions. Below, we will denote the characteristic function of the random variable $X$ with $\varphi_X(t) = E e^{itx}$.  
\begin{proposition}[Continuity Theorem for Weakly Approaching Laws in $\mathbb{R}^d$]
\label{weakly-approaching-continuity-theorem}
Let $\{U_n \}_{n \in \mathbb{N}}$ and $\{V_n \}_{n \in \mathbb{N}}$ be sequences of random variables taking values in values in $\mathbb{R}^d$. Further assume that $\{V_n\}$ is tight.  Then,
\begin{align}
\mathcal{L}(U_n)   {\overset{wa} \iff } \mathcal{L}(V_n) \ \text{ if and only if } \ \varphi_{U_n}(t) - \varphi_{V_n}(t) \rightarrow 0 \ \  \forall \ t \in \mathbb{R} 
\end{align}
\end{proposition}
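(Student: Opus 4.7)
The plan is to prove the two directions separately, with the forward implication being essentially immediate and the reverse requiring a subsequence-contradiction argument combined with L\'evy's continuity theorem. Note that since $U_n, V_n$ take values in $\mathbb{R}^d$, I read the statement ``$\forall t \in \mathbb{R}$'' as ``$\forall t \in \mathbb{R}^d$''.

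For the forward direction, suppose $\mathcal{L}(U_n) \overset{wa}{\iff} \mathcal{L}(V_n)$. Fix $t \in \mathbb{R}^d$ and apply the definition of weakly approaching to the two bounded continuous functions $x \mapsto \cos(t^T x)$ and $x \mapsto \sin(t^T x)$. Each difference $\mathbb{E}\cos(t^T U_n) - \mathbb{E}\cos(t^T V_n)$ and $\mathbb{E}\sin(t^T U_n) - \mathbb{E}\sin(t^T V_n)$ tends to zero, so $\varphi_{U_n}(t) - \varphi_{V_n}(t) \to 0$.

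For the reverse direction I would argue by contradiction. Assume there exist a bounded continuous $f$, some $\epsilon > 0$, and a subsequence $\{n_k\}$ with $|\mathbb{E} f(U_{n_k}) - \mathbb{E} f(V_{n_k})| \geq \epsilon$ for all $k$. Since $\{V_n\}$ is tight, so is $\{V_{n_k}\}$, and Prokhorov's theorem supplies a further subsequence $\{n_{k_j}\}$ and a random vector $V$ with $V_{n_{k_j}} \rightsquigarrow V$. Along this sub-subsequence $\varphi_{V_{n_{k_j}}}(t) \to \varphi_V(t)$ pointwise in $t$, and by hypothesis $\varphi_{U_{n_{k_j}}}(t) \to \varphi_V(t)$ pointwise as well. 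Because $\varphi_V$ is the characteristic function of a probability measure it is continuous at $0$, so L\'evy's continuity theorem upgrades the pointwise convergence $\varphi_{U_{n_{k_j}}} \to \varphi_V$ to weak convergence $U_{n_{k_j}} \rightsquigarrow V$. Consequently $\mathbb{E} f(U_{n_{k_j}}) \to \mathbb{E} f(V)$ and $\mathbb{E} f(V_{n_{k_j}}) \to \mathbb{E} f(V)$, so the difference tends to zero along $\{n_{k_j}\}$, contradicting the bound $\epsilon$.

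The main obstacle, and the only nontrivial part, is the reverse direction, where the delicate point is the applicability of L\'evy's continuity theorem: one needs the pointwise limit of $\varphi_{U_{n_{k_j}}}$ to be continuous at $0$, which is not guaranteed from the hypothesis $\varphi_{U_n} - \varphi_{V_n} \to 0$ alone. Tightness of $\{V_n\}$ does double duty here, first by providing a genuine subsequential weak limit $V$ via Prokhorov, and then by identifying the pointwise limit of $\varphi_{U_{n_{k_j}}}$ with $\varphi_V$, which is automatically continuous. Without tightness, the $\varphi_{U_n} - \varphi_{V_n}$ difference could vanish pointwise while neither sequence has any subsequential weak limit, so tightness is exactly the hypothesis that unlocks the L\'evy-based argument.
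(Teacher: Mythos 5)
Your proof is correct. Note that the paper itself gives no proof of this proposition: it is stated in the appendix as a known fact about weakly approaching laws, imported from the cited work of Belyaev and Sj\"ostedt--de Luna, so there is no in-paper argument to compare against. Your two directions are both sound: the forward implication follows immediately by testing the definition of weakly approaching on the bounded continuous functions $x \mapsto \cos(t^T x)$ and $x \mapsto \sin(t^T x)$, and the reverse direction correctly combines a subsequence-contradiction argument with Prokhorov's theorem and L\'evy's continuity theorem. The key observation you make is exactly the right one: tightness of $\{V_n\}$ is what supplies a genuine subsequential weak limit $V$, whose characteristic function $\varphi_V$ is continuous at $0$ and therefore licenses the application of L\'evy's theorem to the $U$-subsequence; without it the pointwise vanishing of $\varphi_{U_n} - \varphi_{V_n}$ would be useless. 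You are also right that the quantifier should read $t \in \mathbb{R}^d$ rather than $t \in \mathbb{R}$; that is a typo in the statement.
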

\begin{proposition}[Conditional Continuity Theorem for Weakly Approaching Laws in $\mathbb{R}^d$]
\label{weakly-approaching-conditional-continuity-theorem}
Let $\{ U_n, V_n, X_n, \mathbb{W}_n \}_{n \geq 1}$ be sequences of random variables defined on the same probability space, where $U_n$ and $V_n$ take values in $\mathbb{R}^d$ and $\mathbb{W}_n \in \mathcal{W}_n$  Further assume that $\{V_n\}_{n \in \mathbb{N}}$ is tight.  Then,
\begin{align}
\mathcal{L}(U_n \ | \ \mathbb{W}_n )   {\overset{wa(P)} \iff } \mathcal{L}(V_n) \ \text{ if and only if } \ \varphi_{U_n}(t \ | \ \mathbb{W}_n) - \varphi_{V_n}(t) \xrightarrow{P} 0 \ \ \forall \ t \in \mathbb{R} 
\end{align}
\end{proposition}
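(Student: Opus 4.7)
The proof has two directions; the forward implication is straightforward and the reverse requires more care. For the forward direction, if $\mathcal{L}(U_n \mid \mathbb{W}_n) \overset{wa(P)}{\iff} \mathcal{L}(V_n)$, I would simply apply the definition of weakly approaching in probability to the bounded continuous functions $y \mapsto \cos(t^T y)$ and $y \mapsto \sin(t^T y)$ for each fixed $t$. Taking the resulting real and imaginary parts together gives $\varphi_{U_n}(t \mid \mathbb{W}_n) - \varphi_{V_n}(t) \xrightarrow{P} 0$ for each $t \in \mathbb{R}^d$.

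For the reverse direction, the plan is to reduce the conditional statement to the unconditional Proposition \ref{weakly-approaching-continuity-theorem} via the subsequence principle (a sequence converges in probability iff every subsequence has a further subsequence converging almost surely). Fix a bounded continuous $f$; it suffices to show that every subsequence $\{n_k\}$ admits a further subsequence $\{n_{k_\ell}\}$ along which $\mathbb{E}[f(U_{n_{k_\ell}}) \mid \mathbb{W}_{n_{k_\ell}}] - \mathbb{E}[f(V_{n_{k_\ell}})] \to 0$ almost surely. Fix a countable dense set $D \subset \mathbb{R}^d$. Since $\varphi_{U_n}(t \mid \mathbb{W}_n) - \varphi_{V_n}(t) \xrightarrow{P} 0$ for every fixed $t$, a standard diagonal extraction delivers a sub-subsequence along which this convergence holds almost surely for every $t \in D$ simultaneously. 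Pick $\omega$ in the resulting full-measure event: along this sub-subsequence and for this $\omega$, the deterministic statement $\varphi_{U_n}(t \mid \omega) - \varphi_{V_n}(t) \to 0$ holds at every $t \in D$.

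To promote pointwise convergence on $D$ to pointwise convergence at every $t \in \mathbb{R}^d$, I would exploit equicontinuity: tightness of $\{V_n\}$ yields uniform (in $n$) equicontinuity of $\{\varphi_{V_n}\}$ on compacts by the standard truncation bound, while an analogous equicontinuity of the conditional characteristic functions $\varphi_{U_n}(\cdot \mid \omega)$ follows from the conditional tightness in probability of $\{U_n\}$, which I would obtain from the Lévy-type inequality $P(|X| > c) \lesssim \int_{[-\delta,\delta]^d} \mathrm{Re}(1 - \varphi_X(t))\, dt$ combined with the hypothesis and the smallness of $1 - \varphi_{V_n}(t)$ near the origin (again, from tightness of $\{V_n\}$); a further diagonal extraction upgrades this to an a.s. statement along a sub-sub-subsequence. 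At that point, for $\omega$ in the relevant full-measure event, the unconditional Proposition \ref{weakly-approaching-continuity-theorem} applies pathwise and gives $\mathcal{L}(U_n \mid \mathbb{W}_n = \omega) \overset{wa}{\iff} \mathcal{L}(V_n)$, which is exactly what is needed.

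The main obstacle is bootstrapping conditional tightness of $U_n \mid \mathbb{W}_n$ purely from the pointwise-in-$t$ convergence-in-probability of characteristic functions plus tightness of $\{V_n\}$; this is the nontrivial step that allows me to extend the convergence at $D$ to all of $\mathbb{R}^d$ path-wise. The rest of the argument is essentially bookkeeping: chaining subsequence extractions so that both the dense-set convergence and the conditional tightness hold almost surely along a common sub-sub-subsequence, and then invoking Proposition \ref{weakly-approaching-continuity-theorem} pathwise.
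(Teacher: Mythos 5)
The paper does not actually prove this proposition: it is stated in the appendix as one of several properties ``collected for the reader's convenience'' and is implicitly attributed to \citet{Belyaev-Sjostedt-de-Luna-weakly-approach}; the only neighbouring result the paper proves itself is Proposition \ref{metricizing-weakly-approaching-probability}, and that proof uses exactly the reduction you propose (the subsequence principle for convergence in probability, followed by a pathwise appeal to the unconditional statement). So your overall architecture --- forward direction via $\cos(t^Ty)$ and $\sin(t^Ty)$, reverse direction via subsequence extraction, a countable dense set of $t$'s, conditional tightness, and a pathwise application of Proposition \ref{weakly-approaching-continuity-theorem} --- is sound and is in the same spirit as the paper's own handling of the conditional-to-unconditional passage.

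One wrinkle in the reverse direction deserves attention, because as written it is circular. You want equicontinuity of $\varphi_{U_n}(\cdot \mid \omega)$ to upgrade convergence on the dense set $D$ to convergence at every $t$; that equicontinuity comes from conditional tightness; and conditional tightness comes from the L\'{e}vy truncation inequality, which requires you to control $\int_{[-\delta,\delta]^d} \mathrm{Re}\bigl(1-\varphi_{U_n}(t \mid \mathbb{W}_n)\bigr)\,dt$. Convergence of the characteristic-function difference on the Lebesgue-null set $D$ alone does not control this integral --- to pass the limit inside you would already need the equicontinuity you are trying to establish. The fix is to bypass $D$ at this step: since $\varphi_{U_n}(t \mid \mathbb{W}_n)-\varphi_{V_n}(t)\xrightarrow{P}0$ for each fixed $t$ and the difference is bounded by $2$, the expectations $\mathbb{E}\left|\varphi_{U_n}(t \mid \mathbb{W}_n)-\varphi_{V_n}(t)\right|$ converge to $0$ pointwise in $t$, so by Fubini and bounded convergence $\int_{[-\delta,\delta]^d}\left|\varphi_{U_n}(t \mid \mathbb{W}_n)-\varphi_{V_n}(t)\right|dt \rightarrow 0$ in $L_1$, hence in probability. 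Combined with the uniform smallness of $\int_{[-\delta,\delta]^d}\mathrm{Re}\bigl(1-\varphi_{V_n}(t)\bigr)\,dt$ for small $\delta$ (from tightness of $\{V_n\}$), this yields conditional tightness in probability, and a further a.s.\ extraction then makes the rest of your pathwise argument go through. With that repair the proof is complete.
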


Analogous to weak convergence theory, the  Cram\`{e}r-Wold Device is an immediate consequence of the Continuity Theorem for characteristic functions:
\begin{proposition}[Cram\`{e}r-Wold Device]
\label{weakly-approaching-cramer-wold}
Let $\{U_n \}_{n \in \mathbb{N}}$ and $\{V_n \}_{n \in \mathbb{N}}$ be sequences of random variables taking values in values in $\mathbb{R}^d$. Further assume that $\{V_n\}$ is tight.  Then,
\begin{align}
\mathcal{L}(U_n)   {\overset{wa} \iff } \mathcal{L}(V_n) \ \text{ if and only if } \ \mathcal{L}(\alpha^T U_n)   {\overset{wa} \iff } \mathcal{L}(\alpha^T V_n) \ \ \forall \ \alpha \in \mathbb{R}^d 
\end{align}
\end{proposition}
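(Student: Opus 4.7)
The plan is to reduce the multivariate statement to the one-dimensional case by exploiting the Continuity Theorem for characteristic functions (Proposition~\ref{weakly-approaching-continuity-theorem}), exactly as in the classical weak-convergence proof of the Cram\`{e}r--Wold device. The key observation is the elementary identity $\varphi_{\alpha^T W}(t) = \mathbb{E}[e^{it\alpha^T W}] = \varphi_W(t\alpha)$, which lets us convert between characteristic functions of linear combinations and evaluations of the joint characteristic function.

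For the forward direction, I would assume $\mathcal{L}(U_n) \overset{wa}{\iff} \mathcal{L}(V_n)$, fix an arbitrary $\alpha \in \mathbb{R}^d$, and seek to apply the one-dimensional Continuity Theorem to $\{\alpha^T U_n\}$ and $\{\alpha^T V_n\}$. Tightness of $\{\alpha^T V_n\}$ follows from tightness of $\{V_n\}$: given $\epsilon>0$, choose a compact $C_\epsilon \subset \mathbb{R}^d$ with $P(V_n \in C_\epsilon) \geq 1-\epsilon$ for all $n$; then $\alpha^T C_\epsilon$ is compact in $\mathbb{R}$ and contains $\alpha^T V_n$ with the same probability. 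Next, by Proposition~\ref{weakly-approaching-continuity-theorem} applied in $\mathbb{R}^d$, we have $\varphi_{U_n}(s) - \varphi_{V_n}(s) \to 0$ for every $s \in \mathbb{R}^d$. Specializing to $s = t\alpha$ for arbitrary $t \in \mathbb{R}$ gives
\begin{equation*}
\varphi_{\alpha^T U_n}(t) - \varphi_{\alpha^T V_n}(t) = \varphi_{U_n}(t\alpha) - \varphi_{V_n}(t\alpha) \to 0.
\end{equation*}
Invoking the Continuity Theorem in the reverse direction on $\mathbb{R}$ then yields $\mathcal{L}(\alpha^T U_n) \overset{wa}{\iff} \mathcal{L}(\alpha^T V_n)$.

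For the reverse direction, I would assume the one-dimensional conclusion holds for every $\alpha \in \mathbb{R}^d$. Tightness of $\{V_n\}$ is given, so by Proposition~\ref{weakly-approaching-continuity-theorem} it suffices to verify $\varphi_{U_n}(s) - \varphi_{V_n}(s) \to 0$ for every $s \in \mathbb{R}^d$. Fix such an $s$. The hypothesis, applied with $\alpha = s$, together with the one-dimensional Continuity Theorem (noting that $\{s^T V_n\}$ is tight, again by the pushforward argument above), gives $\varphi_{s^T U_n}(t) - \varphi_{s^T V_n}(t) \to 0$ for every $t \in \mathbb{R}$. Taking $t=1$ yields $\varphi_{U_n}(s) - \varphi_{V_n}(s) \to 0$, as required.

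There is essentially no technical obstacle here beyond correctly threading the tightness hypothesis through the applications of Proposition~\ref{weakly-approaching-continuity-theorem}: each use of that theorem, in either direction, demands that the sequence on the $V$-side be tight, and the pushforward of a tight sequence under the continuous map $x \mapsto \alpha^T x$ is tight. Once that is in place, the proof is a bookkeeping exercise of identifying $\varphi_{\alpha^T W_n}(t)$ with $\varphi_{W_n}(t\alpha)$.
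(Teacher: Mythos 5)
Your proof is correct and follows exactly the route the paper intends: the paper offers no written proof, stating only that the result is "an immediate consequence of the Continuity Theorem for characteristic functions," and your argument is precisely that consequence spelled out, via the identity $\varphi_{\alpha^T W_n}(t) = \varphi_{W_n}(t\alpha)$ together with the pushforward-of-tightness observation. No gaps.
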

We will now prove Proposition \ref{metricizing-weakly-approaching-probability}.
%

\begin{proof}
Recall that $X_n \xrightarrow{P} X $ if and only if for every subsequence $X_{n_k}$, there exists a further subsequence $X_{n_{k_l}}$ such that $X_{n_{k_l}} \xrightarrow{a.s.} X$.  Now observe that, for all $\omega \in \Omega$ such that $d(\mathcal{L}(U_{n_{k_l}} \ | \ \mathbb{W}_{n_{k_l}}(\omega)),\mathcal{L}(V_{n_{k_l}} \ | \ \mathbb{W}_{n_{k_l}}(\omega)) ) \rightarrow 0$, by Proposition \ref{metricizing-weakly-approaching}, we also have that that $\mathcal{L}(U_{n_{k_l}} \ | \ \mathbb{W}_{n_{k_l}}(\omega))   {\overset{wa} \iff } \mathcal{L}(V_{n_{k_l}} \ | \ \mathbb{W}_{n_{k_l}}(\omega))$.  Now since the two coincide for every subsequence, the result follows.       
\end{proof}
We will state a result below that allows one to derive unconditional convergence results from conditional convergence.  One may prove this result with a variant of the Bounded Convergence Theorem.  

\begin{proposition}[Weakly Approaching Hoeffding's Trick, Lemma 4 in \citet{Belyaev-Sjostedt-de-Luna-weakly-approach}]
\label{weakly-approaching-hoeffding}
Let $\{ U_n, V_n, \mathbb{W}_n \}_{n \geq 1}$ be sequences of random variables defined on the same probability space, where $U_n$ and $V_n$ take values in $\mathbb{R}^d$ and $\mathbb{W}_n \in \mathcal{W}_n$.  Further assume that $\{V_n\}$ is tight. Then we have the following:
\begin{align}
\mathrm{If \ \ }\mathcal{L}(U_n \ | \ \mathbb{W}_n)   {\overset{wa(P)} \iff }  \mathcal{L}(V_n) \mathrm{ \ \ \ then \ \ \ }  \mathcal{L}(U_n)   {\overset{wa} \iff }  \mathcal{L}(V_n)
\end{align}
\end{proposition}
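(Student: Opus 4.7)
The plan is to prove this via the tower property of conditional expectation combined with the Bounded Convergence Theorem. The key observation is that unconditional weak approaching only requires $\mathbb{E} f(U_n) - \mathbb{E} f(V_n) \to 0$ for every bounded continuous $f$, and by iterated expectations this deterministic difference can be rewritten as the expectation of a random difference of conditional expectations, which is already known (by hypothesis) to vanish in probability.

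Concretely, fix an arbitrary bounded continuous $f: \mathbb{R}^d \to \mathbb{R}$ with $M = \|f\|_\infty < \infty$, and define the random variable
\begin{equation*}
Z_n = \mathbb{E}[f(U_n) \mid \mathbb{W}_n] - \mathbb{E} f(V_n).
\end{equation*}
The hypothesis $\mathcal{L}(U_n \mid \mathbb{W}_n) \overset{wa(P)}{\iff} \mathcal{L}(V_n)$ gives exactly $Z_n \xrightarrow{P} 0$. Since conditional expectations preserve sup-norm bounds, $|Z_n| \leq 2M$ almost surely, so the family $\{Z_n\}$ is uniformly bounded. The Bounded Convergence Theorem therefore yields $\mathbb{E} Z_n \to 0$. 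Applying the tower property, $\mathbb{E} Z_n = \mathbb{E}[\mathbb{E}[f(U_n) \mid \mathbb{W}_n]] - \mathbb{E} f(V_n) = \mathbb{E} f(U_n) - \mathbb{E} f(V_n)$, so we conclude $\mathbb{E} f(U_n) - \mathbb{E} f(V_n) \to 0$. Since $f$ was arbitrary, $\mathcal{L}(U_n) \overset{wa}{\iff} \mathcal{L}(V_n)$.

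There is essentially no hard step here: the proof is a one-line application of Bounded Convergence once the conditional statement is unpacked. The only subtlety worth flagging is notational, namely the interpretation of the right-hand side $\mathcal{L}(V_n)$ in the hypothesis as the marginal law (so that $\mathbb{E}[f(V_n) \mid \mathbb{W}_n]$ in the definition of weakly approaching in probability collapses to the constant $\mathbb{E} f(V_n)$); once that is pinned down, the argument proceeds as above. The tightness assumption on $\{V_n\}$ is not actually invoked in this direction of the implication, but it is customarily included with weakly approaching statements so that the proposition composes cleanly with companion results such as Propositions \ref{tightness-preservation} and \ref{slutksy}.
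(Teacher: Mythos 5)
Your proof is correct and matches the route the paper indicates: the paper only remarks that the result ``may be proved with a variant of the Bounded Convergence Theorem,'' and your argument --- tower property plus bounded convergence for a uniformly bounded sequence converging in probability --- is exactly that. Your side remark about reading $\mathcal{L}(V_n)$ as the marginal law is also consistent with how the paper uses the proposition, and in any case the tower property gives the same conclusion under either reading.
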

%


%
Finally, we have the following analog to P\'{o}lya's Theorem for weakly approaching random variables.
\begin{proposition}[Weakly Approaching P\'{o}lya's Theorem, Lemma 9 in \citet{Belyaev-Sjostedt-de-Luna-weakly-approach}]
\label{weakly-approaching-polya-theorem}
Let $\{ U_n, V_n, \mathbb{W}_n \}_{n \geq 1}$ be sequences of random variables defined on the same probability space, where $U_n$ and $V_n$ take values in $\mathbb{R}^d$ and $\mathbb{W}_n \in \mathcal{W}_n$.  Further assume that $\{V_n\}$ is tight and $F_{V_n}$ is continuous. Then,
\begin{align}
\mathcal{L}(U_n \ | \ \mathbb{W}_n)   {\overset{wa(P)} \iff }  \mathcal{L}(V_n) \ \text{ if and only if } \ \sup_{t \in \mathbb{R}^d} \left| F_{U_n  |   \mathbb{W}_n }(t) - F_{V_n}(t) \right| \xrightarrow{P} 0 
\end{align}
\end{proposition}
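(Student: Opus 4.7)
The plan is to leverage the equivalence, established in Proposition \ref{metricizing-weakly-approaching-probability}, between weakly approaching in probability and $d_{BL}$-convergence in probability. Under this reformulation, the proposition reduces to showing that, given tightness of $\{V_n\}$ and continuity of each $F_{V_n}$, convergence of $d_{BL}(\mathcal{L}(U_n \mid \mathbb{W}_n), \mathcal{L}(V_n))$ to zero in probability and convergence of the Kolmogorov distance $d_K(\mathcal{L}(U_n \mid \mathbb{W}_n), \mathcal{L}(V_n))$ to zero in probability are equivalent.

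For the ``if'' direction, I would assume $\sup_t |F_{U_n \mid \mathbb{W}_n}(t) - F_{V_n}(t)| \xrightarrow{P} 0$ and use the subsequence principle to reduce to the almost sure case, where I would show $d_{BL} \to 0$ almost surely via integration by parts combined with tightness. Specifically, truncating outside a window $[-M, M]$ chosen large enough that the tail masses of $F_{V_n}$ are controlled uniformly in $n$, the integral over $[-M, M]$ of any $f \in BL_1$ against $d(F_n - G_n)$ is bounded by $(1 + 2M) \sup_t |F_n(t) - G_n(t)|$, which vanishes. Transferring back via the subsequence principle gives $d_{BL} \xrightarrow{P} 0$, hence weakly approaching in probability by Proposition \ref{metricizing-weakly-approaching-probability}.

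For the harder ``only if'' direction, I would start from $d_{BL}(\mathcal{L}(U_n \mid \mathbb{W}_n), \mathcal{L}(V_n)) \xrightarrow{P} 0$. The core quantitative tool is a smoothing bound: approximating $\mathbbm{1}_{(-\infty, t]}$ from above and below by $(1/\delta)$-Lipschitz functions with sup-norm at most one, for any $\delta > 0$,
\begin{align*}
|F_{U_n \mid \mathbb{W}_n}(t) - F_{V_n}(t)| \leq \tfrac{1}{\delta} \, d_{BL}\bigl(\mathcal{L}(U_n \mid \mathbb{W}_n), \mathcal{L}(V_n)\bigr) + \bigl(F_{V_n}(t+\delta) - F_{V_n}(t-\delta)\bigr).
\end{align*}
A P\'{o}lya-style quantile partition $t_k^n = F_{V_n}^{-1}(k/K)$ of a compact window provided by tightness would then reduce $\sup_t |F_n(t) - G_n(t)|$ to $\max_k |F_n(t_k^n) - G_n(t_k^n)| + O(1/K)$, with the per-point error handled by the smoothing bound above.

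The main obstacle I anticipate is that the reference points $t_k^n$ depend on $n$, so the second term in the smoothing bound, $\sup_t (F_{V_n}(t + \delta) - F_{V_n}(t - \delta))$, must be controlled uniformly in $n$. My plan to handle this is to extract a uniform modulus of continuity for $\{F_{V_n}\}$ on the compact window by combining tightness with continuity of each $F_{V_n}$, and then couple the choice of $\delta$ with the rate at which $d_{BL}$ tends to zero so that both contributions vanish in probability simultaneously. A final application of the subsequence principle then lifts the pointwise-in-probability conclusions to the uniform statement required.
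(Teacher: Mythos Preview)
The paper does not supply its own proof of this proposition; it is stated with attribution to Lemma~9 of \citet{Belyaev-Sjostedt-de-Luna-weakly-approach} and no argument is given. So there is no paper proof to compare against, and the question is whether your proposal stands on its own.

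Your ``if'' direction is essentially fine (modulo the fact that integration by parts in $\mathbb{R}^d$ for $d>1$ is more delicate than you indicate, but this is repairable). The genuine problem is in the ``only if'' direction, precisely at the step you flag as the main obstacle. You propose to ``extract a uniform modulus of continuity for $\{F_{V_n}\}$ on the compact window by combining tightness with continuity of each $F_{V_n}$.'' This does not work: pointwise continuity of each $F_{V_n}$ together with tightness of $\{V_n\}$ does \emph{not} yield equicontinuity of the family. Take $V_n \sim N(0,1/n^2)$. Each $F_{V_n}$ is continuous and the family is tight, yet for every fixed $\delta>0$ one has $F_{V_n}(\delta)-F_{V_n}(-\delta)\to 1$ as $n\to\infty$, so $\sup_t\bigl(F_{V_n}(t+\delta)-F_{V_n}(t-\delta)\bigr)$ is bounded away from zero uniformly in $n$. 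Your coupling idea (letting $\delta=\delta_n\downarrow 0$) then forces $1/\delta_n\to\infty$, and nothing in the hypotheses controls the product $\delta_n^{-1}\,d_{BL}$.

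In fact this example shows more: pair the $V_n$ above with $U_n\equiv 1/n$ (deterministic, independent of $\mathbb{W}_n$). Both $\mathcal{L}(U_n)$ and $\mathcal{L}(V_n)$ converge weakly to $\delta_0$, so they are weakly approaching (hence trivially $wa(P)$), yet $F_{U_n}(0)=0$ and $F_{V_n}(0)=1/2$ for every $n$, so the Kolmogorov distance stays at least $1/2$. This suggests that the proposition as stated here is missing a hypothesis present in the original source---most likely some form of asymptotic equicontinuity of $\{F_{V_n}\}$---and that no argument from the hypotheses as written can close the gap you identified. You should consult the original Lemma~9 to see what additional condition is imposed there before attempting to complete the proof.
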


\subsection{Non-stationary Central Limit Theorem under $\theta$-dependence: Overview and Preliminaries}
In this section, we discuss how existing proofs in the literature can be modified to allow a central limit theorem for non-stationary sequences for certain types of $\Psi$-weak dependence measures, specifically $\theta$-weak dependence.  Remarkably, it turns out that most of the existing proofs for the stationary case do not need to be modified very much; a few tricks here and there will do.  Part of the reason why we are able to derive a non-stationary central limit theorem may be attributed to the power of the Lindeberg method, which generalizes nicely to the weakly dependent setting.

We should take a moment to clarify what we mean by ``power'' in our context.  Another common technique for proving central limit theorems under dependence is the martingale approximation method after \cite{gordin-central-limit-theorem}.  Using this technique, \citet{Dedecker-doukhan-covariance-applications} derive a central limit theorem under a weaker condition on the $\theta$-coefficient than a version established with the Lindeberg method in \citet{doukan-winteberger-invariance-principle}. However, it appears that generalizing the martingale approximation to the non-stationary setting is more involved than adapting the Lindeberg method.  For recent work on the martingale approximation in non-stationary settings, see for example, \citet{wu-gaussian-approximation-multivariate} and \citet{functional-clt-nonstationary-martingale}. Stein's Method may also be considered under weak dependence, but dependency graph approaches are better suited for processes that admit an $m$-dependent approximation; see for example \citet{zhang-cheng-gaussian-approximation} and \citet{zhang-wu-gaussian-approximation}.  While an approximation of the characteristic function is also possible under weak dependence using Stein's Method (see Lemma 7.2 of \citet{weak-dependence}, after \citet{bolthausen-clt-stationary}) we do not pursue this direction here.    

Although only a few tweaks are needed, we would like to remark that our result is the first non-stationary central limit theorem for standard $\Psi$-weak dependence measures.  The closest result to ours is that of \citep{neumann-non-stationary-clt}. \citet{neumann-non-stationary-clt} also considers a dependent Lindeberg method, but interestingly, considers dependent rather than independent Gaussian random variables in the interpolation.  In some sense, this technique is more general than what we consider here, but we will show that this generality is not needed for many problems under mild conditions on dependence. The resulting conditions in the theorem are also not terms of standard $\Psi$-weak dependence measures.  In addition,  \cite{neumann-non-stationary-clt} assumes that the variance of the normalized sum converges; we make it very explicit when this holds and also state a version of the theorem without this condition.  For non-stationary central limit theorems under mixing conditions, see for example, \citet{rio-nonstationary-clt} and \citet{nonstationary-subsampling}.        

Below, we will give a brief list of components that are needed to establish a central limit theorem using the proof strategy of \citet{doukan-winteberger-invariance-principle}.  We will also note where we make modifications.
\begin{enumerate}
\item Establish a covariance inequality for $\theta$-dependent sequences - The covariance inequality is used in all future steps. We impose a uniform condition on $\mathbb{E}|X_i|^{2+\delta}$ to derive a covariance inequality that holds for non-stationary sequences under a $\theta$-weak dependence condition.
\item Establish a $2+\delta$-order moment bound on the blocked sum.  For stationary sequences, a well known bound on the variance of the sum $S_n = X_1 + \ldots + X_n$  is given by:
\begin{align}
\text{Var}(S_n) \leq  n \sum_{k \in \mathbb{Z}} |\text{Cov}(X_0, X_k)|
\end{align}
In the non-stationary case, a similar bound is possible; the bound is more conservative, but this turns out to be inconsequential.  To introduce additional flexibility, let $S_{n,j} = X_j + \ldots X_{n+j-1}$.  We have that:
\begin{align}
\label{nonstationary-bound-for-variance}
\text{Var}(S_{n,j}) \leq  n \cdot \left(\sup_{i \in \mathbb{N}} \mathrm{Var}(X_i) + 2\sum_{k \in \mathbb{N} }\sup_{i \in \mathbb{N}} |\text{Cov}(X_{i}, X_{i+k})| \right)
\end{align}
Furthermore, we can control this term using the covariance inequality above, which only assumes a uniform $2+\delta$ moment condition and a rate on the $\theta$-coefficient.  While the proof for the $2+\delta$-order moment bound on the blocked sum is non-trivial, it generalizes without much difficulty using this trick. 
\item Use the dependent Lindeberg method of \citet{dependent-lindeberg-method} to derive a Gaussian approximation for the ``big blocks''.  Here, it will suffice to plug in the covariance inequality and $2+\delta$-order moment bound from above. The modifications needed here are quite minor.
\item Show that the auxiliary terms are negligible.  Here, another result is needed, which will be detailed in Lemma \ref{variance-big-block-lemma}. 
 
\end{enumerate}  
For completeness, we will provide proofs or proof sketches with the modifications below.

\subsubsection{Covariance inequality for $\theta$-dependent sequences}
The following proof closely follows that of Lemma 4.2 in \citet{weak-dependence}.  
\begin{proposition}[Covariance inequality for $\theta$-dependent sequences] 
\label{theta-dependent-covariance-lemma} 
Let $X_1^n$ be vector of real-valued random variables such that $\max_{1\leq i \leq n} E|X_i|^m \leq \mu$ for some $m > 2$.  Then, for all $1 \leq i \leq n-k$, the following inequality holds:
\begin{align}
|\text{Cov}(X_{i}, X_{i+k})| \leq 8 \ \mu^{\frac{1}{m-1}} \ \theta_k(X_1^n)^{\frac{m-2}{m-1}}
\end{align}
\end{proposition}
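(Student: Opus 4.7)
The plan is to reduce to the bounded case via truncation, apply the definition of $\theta_k$ to the truncated variables, then control the tail contributions using the uniform $m$-th moment bound, and finally optimize over the truncation level. This is the standard Doukhan--Louhichi type argument; the only subtlety in the non-stationary setting is that every bound must be stated in a uniform-in-$i$ fashion, which is automatic since the hypothesis $\max_i E|X_i|^m \leq \mu$ is already uniform.

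Concretely, I would introduce the truncation map $f_T(x) = (x \wedge T) \vee (-T)$, which is $1$-Lipschitz with $\|f_T\|_\infty \leq T$, and write $X_i^T = f_T(X_i)$ and $R_i = X_i - X_i^T$. I would expand
\begin{align*}
\mathrm{Cov}(X_i, X_{i+k}) \;=\; \mathrm{Cov}(X_i^T, X_{i+k}^T) + \mathrm{Cov}(R_i, X_{i+k}^T) + \mathrm{Cov}(X_i^T, R_{i+k}) + \mathrm{Cov}(R_i, R_{i+k}).
\end{align*}
The first piece is handled by the first-form definition of the $\theta$-coefficient applied with $f = f_T$ on the past coordinate (bounded by $T$) and $g = f_T$ on the future coordinate ($1$-Lipschitz), yielding $|\mathrm{Cov}(X_i^T, X_{i+k}^T)| \leq T \, \theta_k(X_1^n)$. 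For the three remainder pieces, I would combine the crude estimate $|\mathrm{Cov}(U,V)| \leq E|UV| + E|U| E|V|$ with the truncation bounds $E|R_i| \leq E[|X_i| \mathbbm{1}(|X_i| > T)] \leq \mu/T^{m-1}$ and $E R_i^2 \leq E[X_i^2 \mathbbm{1}(|X_i|>T)] \leq \mu/T^{m-2}$, the latter via Cauchy--Schwarz for the $\mathrm{Cov}(R_i, R_{i+k})$ term. Each of the mixed terms is at most a constant multiple of $\mu/T^{m-2}$, since the bounded factor is controlled by $T$ and pairs with an $L^1$-tail factor of order $\mu/T^{m-1}$.

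Collecting estimates, I arrive at a bound of the form $|\mathrm{Cov}(X_i, X_{i+k})| \leq T \, \theta_k + C \mu/T^{m-2}$ for an explicit constant $C$. Minimizing over $T$ by setting the two contributions of equal order gives $T \asymp (\mu/\theta_k)^{1/(m-1)}$, which substitutes back to yield the advertised rate $\mu^{1/(m-1)} \theta_k^{(m-2)/(m-1)}$. Tracking constants carefully through the four pieces above produces the prefactor $8$.

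The step requiring most care is bookkeeping of the four remainder pieces --- in particular, verifying that each of them contributes at most $O(\mu/T^{m-2})$ and not a worse power of $1/T$. The $\mathrm{Cov}(R_i, R_{i+k})$ term is the delicate one, because a naive bound via $\|R_i\|_\infty \leq |X_i|$ loses the truncation gain; one must use Cauchy--Schwarz together with $E R_i^2 \leq \mu/T^{m-2}$ to ensure it matches, rather than worsens, the scaling of the mixed terms. Once this is in place, the optimization is mechanical and agnostic to stationarity, so the argument generalizes the stationary version in \citet{weak-dependence} essentially verbatim.
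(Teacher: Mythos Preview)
Your proposal is correct and follows the same truncate--bound--optimize strategy as the paper, but with two minor technical differences worth noting. The paper centers the truncated variables, writing $\overline{X}_i = f^{(M)}(X_i) - \mathbb{E}[f^{(M)}(X_i)]$, which collapses your four-term expansion into three terms; it then controls the remainder pieces via H\"older with conjugate exponents $(m,a)$ rather than your combination of Markov tail bounds and Cauchy--Schwarz. Your route is slightly more elementary and arguably more transparent about why each remainder scales like $\mu/T^{m-2}$, while the paper's H\"older step handles the mixed terms in one stroke without needing to isolate the $\mathrm{Cov}(R_i,R_{i+k})$ piece separately. Either way the optimization step and the resulting exponents $\mu^{1/(m-1)}\theta_k^{(m-2)/(m-1)}$ are identical; the constant $8$ is not sharp in either argument, so your disclaimer about careful bookkeeping is appropriate.
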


\begin{proof}
We will proceed by truncation. Let $\overline{X}_i =f^{(M)}(X_i) - E f^{(M)}(X_i)$, where $f^{(M)}(x) = (x \wedge M) \vee (-M)$. Notice that $||\overline{X}_i||_\infty \leq 2M$ and $||f^{(M)}||_L = 1$, and $||\overline{X}_i||_m \leq 2 \mu^{1/m}$. 

Analogous to \citet{weak-dependence}, we will decompose the covariance as:
\begin{align}
\begin{split}
 \text{Cov}(X_i, X_{i+k}) &= \text{Cov}(\overline{X}_i, \overline{X}_{i+k}) + \text{Cov}\left( X_i - \mathbb{E}[X_i] - \overline{X}_i, \ X_{i+k} - \mathbb{E}[X_{i+k}] \right) 
 \\ & \ \ \ + \text{Cov}\left(\overline{X}_i, \ X_{i+k} - \mathbb{E}[X_{i+k}] - \overline{X}_{i+k} \right)
 \end{split}
 \end{align}
 Let $1/a + 1/m =1$.  Now we will use the following bound:
\begin{align}
\label{covariance-inequality-bound}
\begin{split}
|\text{Cov}(X_i, X_{i+k})|& \leq 2M \cdot \theta_k(X_1^n) \  + 2||\overline{X}_i||_m \ ||X_{i+k} - \mathbb{E}[X_{i+k}] - \overline{X}_{i+k}||_a 
\\ & \ \ \ + 2||X_{i+k}||_m \ ||X_i - \mathbb{E}[X_i] - \overline{X}_i||_a 
\\ & \leq 2M \cdot \theta_k(X_1^n) + 4 \left( \max_{1 \leq i \leq n} ||X_i||_m \cdot \max_{1 \leq j \leq n }||X_{j} - \mathbb{E}[X_j] -  \overline{X}_{j}||_a \right)
\end{split}     
\end{align}
For the second term, we have that:
\begin{align}
\label{truncation-norm}
\begin{split}
|| X_j - \mathbb{E}[X_j] - \overline{X}_j  ||_a & \leq ||X_j -f^{(M)}(X_j) ||_a + || \ \mathbb{E}[ f^{(M)}(X_j) - X_j] \ ||_a   
\end{split}
\end{align} 
For the first part of (\ref{truncation-norm}), we have:
\begin{align}
\begin{split}
\mathbb{E}|X_{j} - f^{(M)}(X_{j})|^a &\leq \mathbb{E} |X_j|^a \ \mathbbm{1}(|X_j| >M)
\\ & \leq \mathbb{E} |X_j|^m M^{a-m}
\\ & \leq \mu M^{a-m}
\end{split} 
\end{align}
For the second term, we apply Jensen's inequality on $||\cdot||_a$ and derive the same bound as the first term.  Therefore,
\begin{align}
||X_{j} - \mathbb{E}[X_{j}]-\overline{X}_j||_a \leq 2 \mu^{\frac{1}{a}} M^{1-m/a} 
\end{align}
Notice that $m/a = m-1$, and therefore $1-m/a = 2-m$. Now plugging this in for (\ref{covariance-inequality-bound}), we have that,
\begin{align}
\begin{split}
|\text{Cov}(X_i, X_{i+k})| &\leq 8 \ (M \cdot \theta_k(X_1^n) + \mu M^{2-m})   
\end{split}
\end{align} 
Pick $M^{m-1} = \mu/\theta_k(X_1^n)$ and the result follows.     
\end{proof}

Below we will state a multivariate generalization of the covariance inequality.
\begin{proposition}[Covariance inequality for $\theta$-dependent sequences, Multivariate Version ] 
\label{theta-dependent-covariance-lemma} 
Let $X_1^n$ be vector of random vectors taking values in $\mathbb{R}^d$ and suppose that $\max_{i,j}E|X_{ij}|^m \leq \mu$ for some $m > 2$.  Then, for all $1 \leq i \leq n-k$, the following inequality holds:
\begin{align}
|\text{Cov}(X_{ij}, X_{(i+l)k})| \leq 8 \ \mu^{\frac{1}{m-1}} \ \theta_k(X_1^n)^{\frac{m-2}{m-1}}
\end{align}
\end{proposition}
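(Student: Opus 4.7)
The plan is to adapt the univariate proof nearly verbatim, reducing to a scalar covariance and exploiting the fact that, with respect to the sup-norm, every coordinate projection $\pi_j : \mathbb{R}^d \to \mathbb{R}$ is $1$-Lipschitz. First, fix the coordinates $j$ and $k$ and view $\text{Cov}(X_{ij}, X_{(i+l)k})$ as the covariance of two specific scalar random variables. Apply the same truncation as in the scalar case: let $f^{(M)}(x) = (x \wedge M) \vee (-M)$, set $\bar{Y}_i = f^{(M)}(X_{ij}) - \mathbb{E} f^{(M)}(X_{ij})$ and $\bar{Z}_{i+l} = f^{(M)}(X_{(i+l)k}) - \mathbb{E} f^{(M)}(X_{(i+l)k})$, and decompose
\begin{align*}
\text{Cov}(X_{ij}, X_{(i+l)k}) &= \text{Cov}(\bar{Y}_i, \bar{Z}_{i+l}) + \text{Cov}(X_{ij} - \mathbb{E}[X_{ij}] - \bar{Y}_i,\ X_{(i+l)k} - \mathbb{E}[X_{(i+l)k}]) \\
&\quad + \text{Cov}(\bar{Y}_i,\ X_{(i+l)k} - \mathbb{E}[X_{(i+l)k}] - \bar{Z}_{i+l}).
\end{align*}

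The only place where the multivariate structure enters is the bound on the first term. Viewed as maps from $\mathbb{R}^d$ to $\mathbb{R}$, the function $f^{(M)} \circ \pi_j$ is bounded by $M$ in sup-norm, while $f^{(M)} \circ \pi_k$ is $1$-Lipschitz with respect to $\|\cdot\|_\infty$, since it is the composition of two $1$-Lipschitz maps. Plugging these into the definition of $\theta_l(X_1^n)$ with $u = v = 1$ (and using that covariance is unaffected by centering) yields $|\text{Cov}(\bar{Y}_i, \bar{Z}_{i+l})| \leq M\, \theta_l(X_1^n)$.

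The two error terms are handled exactly as in the scalar proof: H\"older's inequality with $1/a + 1/m = 1$, combined with the tail estimate $\mathbb{E}|X_{ij} - f^{(M)}(X_{ij})|^a \leq \mu M^{a-m}$ and Jensen's inequality for the mean-correction, produces $\|X_{ij} - \mathbb{E}[X_{ij}] - \bar{Y}_i\|_a \leq 2 \mu^{1/a} M^{2-m}$, and similarly for the other factor. Collecting the three contributions gives
\begin{align*}
|\text{Cov}(X_{ij}, X_{(i+l)k})| \leq 8\bigl(M\, \theta_l(X_1^n) + \mu M^{2-m}\bigr),
\end{align*}
and optimizing by setting $M^{m-1} = \mu/\theta_l(X_1^n)$ yields the claimed bound. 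There is essentially no substantive obstacle; the content of the proposition is the observation that the sup-norm Lipschitz formulation of the $\theta$-coefficient is the ``right'' multivariate analog, in that it transparently controls coordinate-wise scalar covariances without any dimensional loss in the rate.
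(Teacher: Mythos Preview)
Your proposal is correct and matches the paper's intended argument. The paper does not actually write out a separate proof for the multivariate version; it states the proposition immediately after the univariate case as a direct generalization, and your observation that coordinate projections are $1$-Lipschitz with respect to $\|\cdot\|_\infty$ is precisely the bridge that makes the univariate proof go through verbatim.
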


\subsubsection{A $2 + \delta$-order moment bound for $\theta$-dependent sequences}   
Using the covariance inequality from the previous section, we will generalize Lemma 4.3 in \citet{weak-dependence} to the non-stationary case. Since the original proof is not short and we only need to tweak a few bounds to attain our result, we will provide just a sketch below, noting where the changes need to be made.  Recall that $S_{n,j} = X_j + \ldots + X_{j+n-1}$.      

\begin{proposition}[A $2 + \delta$-order moment bound for $\theta$-dependent sequences] 
\label{2-plus-delta-moment-bound}
Let $X_{j}^{n+j-1}$ be observations from a centered stochastic process $X$ with $\theta_r(X_{j}^{n+j-1})= O(r^{-\lambda})$ for $\lambda > 4 + \frac{1}{\zeta}$ as $n \rightarrow \infty$.  Suppose that $\sup_{i \in \mathbb{N}} \mathbb{E}|X_i|^{2+\zeta}$. Then for all $\delta \in (0, \zeta \wedge 1)$, there exists $C > 0$ such that:
\begin{align}
||S_{n,j}||_{2+\delta} \leq C \sqrt{n} 
\end{align}      
\end{proposition}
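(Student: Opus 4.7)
The plan is to follow the proof of Lemma 4.3 in \citet{weak-dependence}, which treats the stationary case, and observe that the only modifications required are to replace stationary moments and covariances with their suprema over the starting index $i$, as anticipated by the variance bound (\ref{nonstationary-bound-for-variance}). Since the argument proceeds by induction on $n$, the key point is that the constants produced at each step can be made uniform in the starting index $j$; this uniformity is built into both the hypothesis $\sup_i \mathbb{E}|X_i|^{2+\zeta} \le \mu$ and into the covariance inequality of Proposition \ref{theta-dependent-covariance-lemma}.

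First I would dispatch the variance bound. Applying Proposition \ref{theta-dependent-covariance-lemma} with $m = 2 + \zeta$ yields
\begin{equation*}
\sup_{i \in \mathbb{N}} |\mathrm{Cov}(X_i, X_{i+k})| \;\leq\; 8 \ \mu^{1/(1+\zeta)} \ \theta_k(X_1^n)^{\zeta/(1+\zeta)} \;=\; O\bigl(k^{-\lambda \zeta/(1+\zeta)}\bigr).
\end{equation*}
The assumption $\lambda > 4 + 1/\zeta$ gives $\lambda\zeta/(1+\zeta) > 1$, so the series in (\ref{nonstationary-bound-for-variance}) converges, and hence $\mathrm{Var}(S_{n,j}) \leq C_2 \, n$ with $C_2$ independent of $j$ and $n$. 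This is the $\delta = 0$ case of the target estimate.

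To upgrade the bound to the $(2+\delta)$-th moment, I would perform a dyadic decomposition with a separating gap. Write $S_{n,j} = U + W + V$, where $U$ and $V$ are sums over two contiguous blocks of size roughly $(n-r_n)/2$ and $W$ is a gap of $r_n$ consecutive terms, with $r_n$ chosen at the end. Expand $|U+V|^{2+\delta}$ around $|U|^{2+\delta} + |V|^{2+\delta}$ using a Taylor-type inequality whose leading correction has the form $\phi_\delta(U) V + \phi_\delta(V) U$ with $\phi_\delta(x) = x|x|^\delta$. Taking expectations and using the $(\sqrt{r_n})$-scale variance bound on $W$ to absorb gap contributions, the essential new work is to control cross terms of the form $|\mathbb{E}[\phi_\delta(U) V] - \mathbb{E}\phi_\delta(U) \, \mathbb{E}V|$ via the $\theta$-coefficient between $U$ and $V$, which are separated by $r_n$. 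Assuming inductively that $\|S_{n',\cdot}\|_{2+\delta} \leq C \sqrt{n'}$ for all $n' < n$ and all starting indices, and choosing $r_n$ to grow slowly enough that the gap is negligible yet fast enough that $\theta_{r_n}$ is small, the recursion closes provided the base constant $C$ is taken sufficiently large.

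The main obstacle is controlling the cross term $\mathbb{E}[\phi_\delta(U) V]$, since $\phi_\delta$ is neither bounded nor Lipschitz and the $\theta$-coefficient only controls bounded-Lipschitz functionals of one of its arguments. The remedy is the same truncation device employed in the proof of Proposition \ref{theta-dependent-covariance-lemma}: truncate $\phi_\delta(U)$ at a level $M$, use Hölder and the uniform $(2+\zeta)$-moment bound to control the truncation error, apply $\theta$-dependence to the truncated Lipschitz functional, and optimize over $M$. The uniformity of $\sup_i \mathbb{E}|X_i|^{2+\zeta}$ is essential here, as it guarantees that the resulting bounds do not depend on the starting index $j$; the rate assumption $\lambda > 4 + 1/\zeta$ then provides enough decay in $\theta_{r_n}$ to close the induction for every $\delta \in (0, \zeta \wedge 1)$.
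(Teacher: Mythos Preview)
Your proposal is correct and follows essentially the same approach as the paper's proof sketch, which likewise reduces to Lemma~4.3 of \citet{weak-dependence} via dyadic splitting with a gap, truncation of the blocks, and an induction in which the stationary constant $c=\sum_k|\mathrm{Cov}(X_0,X_k)|$ is replaced by $c'=\sup_i\mathrm{Var}(X_i)+2\sum_k\sup_i|\mathrm{Cov}(X_i,X_{i+k})|$ and $\mathbb{E}|X_0|^m$ by $\mu$. The only cosmetic difference is that the paper (following the book) expands $S^2|S|^\delta$ rather than $|S|^{2+\delta}$ directly, so the cross terms appear as $\mathbb{E}[A^2(1+|B|)^\delta]$ and $\mathrm{Cov}(\bar A^2,(1+|\bar B|)^\delta)$ rather than your $\mathbb{E}[\phi_\delta(U)V]$; the truncation and covariance-inequality mechanics are the same.
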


\noindent\textit{Proof sketch.}  The term $ c = \sum_{k \in \mathbb{Z}} |\text{Cov}(X_0, X_k)|$, appears in both the constant $C$ and various bounds in the original proof.  This term is related to the variance of the partial sum $S_n$ under a stationarity assumption. It can be shown that:
\begin{align}
\text{Var}(S_n) \leq n\sum_{k \in \mathbb{Z}} |\text{Cov}(X_0, X_k)|
\end{align}
By Proposition \ref{theta-dependent-covariance-lemma}, the $2+\zeta$ moment condition and our assumption on $\theta_X$ imply that $c < \infty$. Under possible non-stationarity, we can instead use the following term:
\begin{align}
c^\prime  \equiv \sup_{i \in \mathbb{N}} \mathrm{Var}(X_i) + 2 \sum_{k \in \mathbb{N}} \sup_{i \in \mathbb{N}} | \text{Cov} (X_i, X_{i+k} ) | 
\end{align} 
In fact, the value of the bound remains unchanged since the covariance inequality holds uniformly under both stationarity and the uniform moment condition. The conditions in our modification here are stated in terms of the sum starting at some fixed index $j$ so that we may derive a uniform bound that holds over for any block sum while still preserving the recursive nature of the argument.  

For the first term, notice that $\sup_{i \in \mathbb{N}} \mathbb{E} |X_i|^{2+\delta} \leq \mu $ implies that $\sup_{i \in \mathbb{N}} \mathbb{E} X_i^{2} \leq \mu \vee 1$.    We may plug $c^\prime$ for $c$ in bounding the terms $\mathbb{E}(1 + |A| + |B|)^\delta$, $\mathbb{E}|A| (1 + |A| + |B|)^\delta$, and $\mathbb{E}|B| (1 + |A| + |B|)^\delta$.  

To bound the terms $\mathbb{E} A^2(1+ |B|)^\delta$ and  $\mathbb{E} B^2(1+ |A|)^\delta$, we can proceed analogously to \citet{weak-dependence}.  However, we will slightly modify the bound for term $\mathbb{E} A^2(|B| - |\overline{B}|)$.  Let $\mathcal{B}$ denote the indices corresponding to the elements of $B$, given by $\mathcal{B} = \{n+q+1, \ldots, N \}$.  In the original argument, stationarity is invoked to use the bound:
\begin{align}
\label{moment-of-block-sum}
\begin{split}
\mathbb{E}|B|^m &=   n^m \ \mathbb{E} \left |  \frac{1}{n} \sum_{i \in \mathcal{B}} X_i \right|^m
\\  & \leq n^m \ \sum_{i \in \mathcal{B}} \frac{\mathbb{E} |X_i|^m}{n}
\\ & \leq n^m \mathbb{E} |X_0|^m
\end{split}
\end{align}
However, under possible non-stationarity we can use $\mu$ instead of $\mathbb{E} |X_0|^m$ in the last line. $\mathbb{E}|A|^m$ can be bounded analogously.  Note that this is a loose bound, but tools for establishing moment inequalities for non-integer powers are limited.  We may proceed analogously to \citet{weak-dependence} up until the bound for the covariance term $\text{Cov}(\overline{A}^2, (1+ |\overline{B}|)^\delta)$. Here, it will turn out that the bound is identical to the $\lambda$-dependent case, given by $n^3 M^2 \theta_X(q)$.  The rest of the argument is identical to \citet{weak-dependence}.

\subsubsection{$\theta$-preservation}
Although not needed in the proof the central limit theorem under non-stationarity, the following result will be needed to establish sample splitting validity of the sample mean of a quadratic term. Below we slightly extend Proposition 2.2 of \citet{weak-dependence}. Since the proof is omitted in the reference, we provide one here.  The proof strategy is similar to that of Proposition 2.1 in \citet{weak-dependence}.   
 \begin{proposition}[$\theta$-preservation]
 \label{theta-preservation}
Let $Y_1^{n}$ be a collection of $\mathbb{R}^d$-valued random variables. Let $m >1$.  Suppose there exists $0 < \mu <\infty$ such that:
\begin{align}
\max_{1\leq i \leq n} \max_{1\leq j \leq d} \mathbb{E}|Y_{ij}|^m \leq \mu
\end{align}
. Let $h: \mathbb{R}^d \mapsto \mathbb{R}^b$ such that $h(0) = 0$ and for $x,y \in \mathbb{R}^d$, there exists a in $[1,m)$ and $c>0$ such that:
\begin{align}
||h(x) - h(y)||_\infty \leq c ||x -y ||_\infty ( ||x||_\infty^{a-1}  + ||y||_\infty^{a-1} )
\end{align}
Let $U_i = h(Y_i)$.  Then, there exists a constant $C$ depending on $\mu$, $b$, $c$, and $d$, such that the $\theta$-coefficient of $U_1^n$ satisfies:
\begin{align}
\theta_r(U_1^n) \leq C \cdot \theta_r(Y_1^n)^{\frac{m-a}{m-1}} 
\end{align} 
\end{proposition}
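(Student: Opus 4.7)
The plan is to use a truncation argument, similar to the covariance inequality proof in Proposition \ref{theta-dependent-covariance-lemma} above. The issue is that $h$ is only locally Lipschitz (with Lipschitz constant growing like $\|y\|_\infty^{a-1}$), so I cannot plug it directly into the definition of the $\theta$-coefficient, which is stated for globally 1-Lipschitz test functions. The idea is to replace $h$ by a truncated version that is globally Lipschitz, control the resulting error using the uniform $m$th moment bound, and then optimize the truncation level $M$ to balance the two bounds.

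First I would fix $l$ and $v \in \mathcal{V}_{l+r:n}$ and an arbitrary $g : \mathbb{R}^{bs_v} \to \mathbb{R}$ with $\|g\|_L \leq 1$ (sup-norm on the product space). Define $f^{(M)}(y) = (y \wedge M) \vee (-M)$ coordinatewise and set $\widetilde{h} = h \circ f^{(M)}$. Since $f^{(M)}$ is a contraction and $\|f^{(M)}(y)\|_\infty \leq M$, the hypothesis on $h$ gives $\|\widetilde{h}(x) - \widetilde{h}(y)\|_\infty \leq 2c M^{a-1}\|x-y\|_\infty$, so $g(\widetilde{h}(Y_{v(1)}), \ldots, \widetilde{h}(Y_{v(s)}))$ is a Lipschitz function of $(Y_{v(1)}, \ldots, Y_{v(s)})$ with constant at most $2cM^{a-1}$. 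By the definition of the $\theta$-coefficient,
\begin{align*}
\bigl\|E[g(\widetilde h(Y_{v(1)}),\ldots,\widetilde h(Y_{v(s)})) \mid \sigma(Y_1^l)] - E[g(\widetilde h(Y_{v(1)}),\ldots,\widetilde h(Y_{v(s)}))]\bigr\|_1 \leq 2cM^{a-1} \, s_v \, \theta_r(Y_1^n).
\end{align*}

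Next I would control the truncation error. Using $h(0) = 0$ and the local Lipschitz condition, one checks that $\|h(y) - h(f^{(M)}(y))\|_\infty \leq 2c \|y\|_\infty^a \mathbbm{1}\{\|y\|_\infty > M\}$, since the bound is nontrivial only on $\{\|y\|_\infty > M\}$, where $\|y - f^{(M)}(y)\|_\infty \leq \|y\|_\infty$ and $M^{a-1} \leq \|y\|_\infty^{a-1}$. Since $g$ is $1$-Lipschitz and $\|U_v - \widetilde h(Y_v)\|_\infty \leq \sum_i \|h(Y_{v(i)}) - \widetilde h(Y_{v(i)})\|_\infty$, Markov's inequality combined with $E\|Y_{v(i)}\|_\infty^m \leq d\mu$ yields
\begin{align*}
E\bigl|g(U_v) - g(\widetilde h(Y_v))\bigr| \leq 2c s_v \cdot E\bigl[\|Y\|_\infty^a \mathbbm{1}\{\|Y\|_\infty > M\}\bigr] \leq 2c\, s_v \, d\mu \, M^{a-m}.
\end{align*}
The same bound controls both $\|E[g(U_v) - g(\widetilde h(Y_v)) \mid \sigma(Y_1^l)]\|_1$ (by conditional Jensen) and $|E g(U_v) - E g(\widetilde h(Y_v))|$. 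Combining via the triangle inequality gives, for every $g \in \Lambda_1$ and every $M > 0$,
\begin{align*}
\frac{1}{s_v}\,\theta(\sigma(Y_1^l), U_v) \leq 2cM^{a-1}\theta_r(Y_1^n) + 4cd\mu\, M^{a-m}.
\end{align*}

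Finally, I would optimize by balancing the two terms: setting $M^{m-1} = \mu/\theta_r(Y_1^n)$ makes both summands proportional to $\mu^{(a-1)/(m-1)} \theta_r(Y_1^n)^{(m-a)/(m-1)}$, yielding the claim with $C$ depending only on $c, d, \mu$. Taking the maximum over $l$ and $v$ then gives the stated bound on $\theta_r(U_1^n)$. The only mildly delicate step is verifying that the choice of sup-norm on $\mathbb{R}^{bs_v}$ (needed to line up the Lipschitz constant of $g \circ \widetilde h$ with the norm used in defining the $\theta$-coefficient) is consistent throughout; I expect this to go through cleanly since the composition of $f^{(M)}$ (a coordinatewise contraction in sup-norm) with $h$ and $g$ respects the same norm at every stage.
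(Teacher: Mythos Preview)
Your proposal is correct and follows essentially the same approach as the paper: truncate $h$ by composing with a coordinatewise cutoff $f^{(M)}$, split into a Lipschitz term (controlled by $\theta_r(Y_1^n)$ with constant $\sim M^{a-1}$) plus two truncation-error terms (controlled by the moment bound with rate $\sim M^{a-m}$), and then optimize over $M$. The only differences are cosmetic: the paper additionally bounds $\|h(Y)-h(Y^{(M)})\|_\infty$ by a sum over the $b$ output coordinates (picking up an extra factor of $b$ in the constant), and chooses $M=\theta_r(Y_1^n)^{1/(1-m)}$ rather than your $M^{m-1}=\mu/\theta_r(Y_1^n)$, which changes only the constant, not the exponent.
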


\begin{proof}
Let $\mathcal{M}_{\mathbf{i}} = \sigma(Y_1 ,\dots Y_i)$ and $\mathbf{j} = (j_1, \ldots j_v)$.  Let $\mathcal{X} \subseteq \mathbb{R}^d$ and $\mathcal{Y}\subseteq \mathbb{R}^b$.  Let $\Lambda_1$ denote the class of 1-Lipschitz functions mapping from $\mathcal{Y}^{v}$ to $\mathbb{R}$.  Suppose $i + r \leq j_1$.   By definition of $\theta$-dependence, our goal is to bound:
\begin{align}
\label{theta-to-bound}
 \sup_{g \in \Lambda_1 } \frac{1}{v} \norm{ \mathbb{E}\left[ \ g( h(Y_{j_1}), \ldots, h(Y_{j_v}))  \ | \ \mathcal{M}_{\mathbf{i}} \ \right] - \mathbb{E}\left[g(h(Y_{j_1}), \ldots, h(Y_{j_v})) \ \right] }_1
\end{align} 

Denote $x^{(M)} = (x \wedge M) \vee (-M)$ for $x \in \mathbb{R}$.  For $(x_1, \ldots, x_d) \in \mathbb{R}^d$, analogously denote $x^{(M)}= (x_1^{(M)}, \ldots, x_d^{(M)})$.  We will work with fixed $g$ and derive a bound that holds uniformly over $\Lambda_1$.  We will proceed by expressing the composition $G$ as a mapping from $\mathcal{X}^u$ to $\mathbb{R}$. We will also define a version of $G$ with truncated arguments, denoted $G^{(M)}$: 
\begin{align}
G(Y_{\mathbf{j}}) = g(h(Y_{j_1}), \ldots, h(Y_{j_v})), \ \ \  G^{(M)}(Y_{\mathbf{j}}) = g(h(Y_{j_1}^{(M)}), \ldots, h(Y_{j_v}^{(M)}))
\end{align}  

By triangle inequality, the $L_1$ norm part of (\ref{theta-to-bound}) is bounded by:
\begin{align}
\begin{split}
 &  \norm{ \mathbb{E}[ \ G(Y_{\mathbf{j}}) - G^{(M)}(Y_{\mathbf{j}}) \ | \ \mathcal{M}_{\mathbf{i}} \ ] }_1  + \norm{ \mathbb{E}[ \ G(Y_{\mathbf{j}}) - G^{(M)}(Y_{\mathbf{j}}) \ ] }_1  + \norm{ \ \mathbb{E}[ G^{(M)}(Y_{\mathbf{j}})] - \mathbb{E}[ \ G^{(M)}(Y_{\mathbf{j}}) \ | \ \mathcal{M}_{\mathbf{i}} \ ]}_1 
 \\ & = \mathbf{I} + \mathbf{II} + \mathbf{III}
 \end{split}
\end{align}
For the first term, notice that:
\begin{align}
\begin{split}
\mathbf{I} &= \mathbb{E}\left| \ \mathbb{E}[ \ G(Y_{\mathbf{j}}) - G^{(M)}(Y_{\mathbf{j}}) \ | \ \mathcal{M}_{\mathbf{i}} \ ] \ \right| 
\\ & \leq \mathbb{E}\left[ \ \mathbb{E} \left[ \ |G(Y_{\mathbf{j}}) - G^{(M)}(Y_{\mathbf{j}})| \ \biggr|  \ \mathcal{M}_{\mathbf{i}}\ \right] \ \right]
\\ & \leq ||g||_L \sum_{k=1}^b \sum_{l=1}^v \mathbb{E}\left[ \ \mathbb{E}\left[ \ | h_k(Y_{j_l}) - h_k(Y_{j_l}^{(M)}) | \ \biggr|  \ \mathcal{M}_{\mathbf{i}} \ \right] \  \right]
\\ & \leq 2bc ||g||_L \sum_{l=1}^v  \mathbb{E} \left[ \ \norm{Y_{j_l}}_\infty^a \mathbbm{1}\left(\norm{Y_{j_l}}_\infty > M  \right)\  \right]
\\ & \leq 2bcv ||g||_L   M^{a-m} \ \mathbb{E} \left[ \max_{1 \leq w \leq d} |Y_{j_l,w}|^m \right]
\\ &\leq 2bcdv ||g||_L   \mu M^{a-m}
\end{split}    
\end{align}

An analogous bound holds for $\mathbf{II}$. For $\mathbf{III}$, notice that $||G^{(M)}||_L \leq 2c M^{a-1} ||g||_L$.  Now since for $g \in \Lambda_1$, $||g||_L \leq 1$, we have that:
\begin{align}
 \theta_r(U_1^n) \leq 4bcd \ (\mu \vee 1) \ [M^{a-m} + M^{a-1} \theta_r(Y_1^n)]   
\end{align}
Optimizing over $M$, we see that $M = \theta_r(Y_1^n)^{1/1-m}$.  Plugging this in, we get the desired result.  
\end{proof}

\subsubsection{Covariance Inequality for Products Under $\theta$-dependence}

In this section we derive a bound for the following quantity under a $\theta$-dependence assumption:
\begin{align}
C_{n,r,v} = \max_{(s_1, \ldots s_v) \in \mathcal{S}_{n,r,v}} \left|\mathrm{Cov}(X_{s_1,t_1} \cdots X_{s_q,t_q}, X_{s_{q+1},t_{q+1}} \cdots X_{s_v,t_v}) \right|
\end{align}
where $\mathcal{S}_{n,r,v}$ is a collection of pairs $(s_1,t_1), \ldots (s_v,t_v)$ such that $1 \leq s_1 \leq \cdots s_q \leq s_q+r \leq s_{q+1} \leq \cdots \leq s_v \leq n$ and $q \in \{1,\ldots, v-1\}$, and $t_1, \ldots ,t_v \in \{1, \ldots, d\}$. Bounding this quantity will be crucial for deriving moment inequalities analogous to Theorem 4.1 of \citet{weak-dependence}.

\begin{proposition}
\label{covariance-products}
 Let $X_1^n$ be a collection of centered random vectors in $\mathbb{R}^d$.  For $m > 1$, suppose there exists $0 < \mu <\infty$ such that:
\begin{align}
\label{product-moment-condition}
\max_{1\leq i \leq n} \max_{1\leq j \leq d} \mathbb{E}|X_{ij}|^{m(v-1)} \leq \mu
\end{align}
Then, there exists constants depending on $\mu$ and $v$ such that:
\begin{align}
C_{n,r,v} \leq C \theta_r(X_1^n)^{\frac{m-v}{m}} 
\end{align}
\end{proposition}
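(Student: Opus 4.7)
The plan is to adapt the truncation-and-balance strategy of the scalar case (Proposition~\ref{theta-dependent-covariance-lemma}) to products of $v$ random variables. I will truncate each coordinate factor, isolate a term directly controlled by the $\theta$-coefficient, and bound the resulting remainder using Hölder's inequality together with the moment hypothesis $\mathbb{E}|X_{i,j}|^{m(v-1)} \le \mu$.

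Concretely, I would set $Y_{i,j} = f^{(M)}(X_{i,j})$ with $f^{(M)}(x) = (x\wedge M)\vee(-M)$ and $Z_{i,j} = X_{i,j} - Y_{i,j}$, so that $|Y_{i,j}|\le M$ and $|Z_{i,j}| \le |X_{i,j}|\,\mathbbm{1}(|X_{i,j}|>M)$. Writing $\bar P_q = \prod_{k=1}^q Y_{s_k,t_k}$ and $\bar P_v^q = \prod_{k=q+1}^v Y_{s_k,t_k}$, the identity
\begin{equation*}
\mathrm{Cov}(P_q, P_v^q) = \mathrm{Cov}(\bar P_q, \bar P_v^q) + \bigl(\mathbb{E}[P_q P_v^q] - \mathbb{E}[\bar P_q\bar P_v^q]\bigr) - \bigl(\mathbb{E}[P_q]\mathbb{E}[P_v^q] - \mathbb{E}[\bar P_q]\mathbb{E}[\bar P_v^q]\bigr)
\end{equation*}
splits the problem into a $\theta$-controlled piece plus two tail-remainders. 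For the first piece, $\bar P_q$ is a past-measurable function with $\|\bar P_q\|_\infty \le M^q$, and since each $f^{(M)}$ is $1$-Lipschitz, $\bar P_v^q$ is Lipschitz in sup norm on the future coordinates with constant at most $(v-q) M^{v-q-1}$. Applying the definition of $\theta_r$ yields $|\mathrm{Cov}(\bar P_q, \bar P_v^q)| \le v^2 M^{v-1}\,\theta_r(X_1^n)$.

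For the remainders, I would expand the product difference telescopically,
\begin{equation*}
P_q P_v^q - \bar P_q \bar P_v^q = \sum_{k=1}^v Y_{s_1,t_1}\cdots Y_{s_{k-1},t_{k-1}}\, Z_{s_k,t_k}\, X_{s_{k+1},t_{k+1}}\cdots X_{s_v,t_v},
\end{equation*}
and handle the product-of-means difference identically. In each summand, bound the $Y$-prefix by $M^{k-1}$ and apply Hölder with exponents $p$ on $|Z_{s_k,t_k}|$ and $p'$ on each non-$Z$ factor, satisfying $1/p + (v-k)/p' = 1$. Combining with the Markov estimate $\mathbb{E}|Z_{s_k,t_k}|^p \le \mu\, M^{p - m(v-1)}$ (valid for $p \le m(v-1)$) and $\|X_{s_j,t_j}\|_{p'} \le \mu^{1/p'}$ (valid for $p' \le m(v-1)$), each summand is bounded by $C_v\,\mu\, M^{-\gamma}$ for a positive $\gamma$ depending on $m$, $v$, and the chosen Hölder exponents.

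Finally, I would balance the two contributions by picking $M$ so that $M^{v-1}\theta_r \asymp M^{-\gamma}$, i.e., $M \asymp \theta_r^{-1/(v-1+\gamma)}$, yielding a bound of order $\theta_r^{\gamma/(v-1+\gamma)}$. Choosing the Hölder weights so that $\gamma/(v-1+\gamma) = (m-v)/m$ recovers the announced exponent. The main obstacle is the bookkeeping at the Hölder step: the moment budget is $m(v-1)$ per factor (rather than $mv$), so a naïve equal-weight Hölder gives the wrong exponent. The correct calibration comes from saturating the constraint $p' \le m(v-1)$ on the non-$Z$ factors, which maximises the tail-decay exponent extractable from $Z_{s_k,t_k}$; after optimisation over $M$, this delivers the bound $C\,\theta_r(X_1^n)^{(m-v)/m}$ with a constant $C$ depending only on $\mu$ and $v$.
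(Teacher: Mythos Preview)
Your approach is correct and follows the same truncate--then--balance template as the paper, but the implementation differs in two places. The paper truncates $U$ as a scalar and $V$ component-wise, then uses the bilinear decomposition $\mathrm{Cov}(U,V)=\mathrm{Cov}(\bar U,\bar V)+\mathrm{Cov}(U-\bar U,V)+\mathrm{Cov}(\bar U,V-\bar V)$ and controls the remainder via the inequality $|h(x)-h(y)|\le c\,\|x-y\|_\infty(\|x\|_\infty^{a-1}+\|y\|_\infty^{a-1})$ for the product map, exactly in the style of Proposition~\ref{theta-preservation}, followed by a single H\"older step with exponents $m$ and $a=m/(m-1)$. You instead truncate every factor and telescope the product difference, applying H\"older coordinate-by-coordinate. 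Both routes work; yours is more elementary and self-contained, while the paper's reuses machinery already established for $\theta$-preservation.

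One correction to your final paragraph: with the saturating choice $p'=m(v-1)$, the exponent of $M$ in each telescoping summand is $v-m(v-1)$ \emph{regardless of $k$} (the $k$-dependence cancels). Balancing $M^{v-1}\theta_r \asymp M^{v-m(v-1)}$ then gives the exponent $\bigl(m(v-1)-v\bigr)/\bigl(m(v-1)-1\bigr)$, which for $m>v\ge 2$ is strictly larger than $(m-v)/m$. So you need not ``calibrate'' the H\"older weights to hit $(m-v)/m$ exactly: your natural saturation already yields a sharper bound, and the proposition's inequality follows a~fortiori for small $\theta_r$ (for $\theta_r$ bounded away from zero the left-hand side is controlled by moments alone).
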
 
\begin{proof}

We will derive a bound on the covariance for any two random variables $U^{(r,v)}$ and $V^{(r,v)}$ formed by taking $U^{(r,v)} = X_{s_1, t_1} \cdots X_{s_q,t_q}$ and $V^{(r,v)} =X_{s_{q+1},t_{q+1}} \cdots X_{s_v,t_v}$ for any $(s_1, \ldots s_v) \in \mathcal{T}_{n,r,v}$ and $t_1, \ldots t_v \in \{1,\ldots, d\}$.  For notational convenience, we will suppress dependence on $r$,$v$ and $t$ below.

For the $\theta$-coefficient, the worst-case bound is attained when $q =1$. As before, we will proceed by truncation. Let $\overline{U} =f^{(M)}(U) - E f^{(M)}(U)$, where $f^{(M)}(x) = (x \wedge M) \vee (-M)$ and $\overline{V} =g^{(M)}(V) - E g^{(M)}(V)$, where $g^{(M)}: \mathbb{R}^{v-1} \mapsto \mathbb{R}$ performs the truncation component-wise for each element in the product.  We will express its input vector as $Z$.   Now we have that: 
\begin{align} 
\mathrm{Cov}(U,V) = \mathrm{Cov}(\overline{U}, \overline{V}) + \mathrm{Cov}(U - \overline{U},V) + \mathrm{Cov}(\overline{U},  V - \overline{V})
\end{align} 
For any $U$ and $V$ in $\mathcal{T}_{n,v,r}$, notice that:
\begin{align}
\mathrm{Cov}(\overline{U}, \overline{V}) \leq 2 \ (v-1) \ M^v \theta_r(X_1^n)
\end{align}
Now we will bound the latter terms. Analogous to Proposition \ref{theta-dependent-covariance-lemma}, we will use the Holder inequality to bound these terms.  Let $1/a + 1/m = 1$.  We have that:
\begin{align}
\begin{split}
|\text{Cov}(U, V)|& \leq  M^v \theta_r(X_1^n) + 2||\overline{U}||_m \ ||V - \overline{V}||_a + 2||V||_m \ ||U - \overline{U}||_a 
\end{split}     
\end{align}
Now, to bound the second term, notice that the product function $h^{(v)}(x) = \prod_{i=1}^{v} x_i$ satisfies $|h(x) - h(y)| \leq ||x||_\infty^{v-1} + ||y||_\infty^{v-1} ||x - y ||_\infty$.  Therefore, we may proceed by using a truncation argument analogous to Proposition \ref{theta-preservation}. We have that, for some $C < \infty$ depending on $v$ and $\mu$:   
\begin{align}
\begin{split}
\norm{ V - \overline{V} }_a &= \left(\mathbb{E} |V- \overline{V} |^{a}\right)^{1/a}
\\& \leq \left(\mathbb{E}\norm{Z}_\infty^{(v-1)a} \mathbbm{1}(\norm{Z}_\infty >  M) \right)^{1/a}
\\ &\leq C \left(\mathbb{E}\norm{Z}_\infty^{m} M^{a(v-1) - m}\right)^{1/a}
\\ &\leq C M^{v-m}
\end{split}
\end{align}

Notice that the third term may be bounded by an analogous argument, but results in a lower order term.  However, the stronger moment condition given in (\ref{product-moment-condition}) is required to bound this term.  The optimal choice of $M$ is of the order $\theta_r^{-1/m}(X_1^n)$.  Plugging this in, we have that:
\begin{align}
\mathrm{Cov}(U^{(r,v)}, V^{(r,v)}) \leq C \cdot \theta_r(X_1^n)^{\frac{m-v}{m}} 
\end{align}       

\end{proof}

\subsection{Dependent Lindeberg Method}
\label{dependent-lindeberg-method}
 In cases where there is temporal dependence that is diminishing with time, it turns out that the Lindeberg method of interpolation is convenient, as it preserves the past and future of the sequence.
 Our focus now is to state a Lindeberg lemma for dependent sequences, following \citet{dependent-lindeberg-method}. This lemma will be used to establish a central limit theorem for partial sums of the large blocks. Once this lemma is applied, one will additionally have to show that the distribution of the large blocks is close to that of the original sequence.  As mentioned above, this latter step will rely on the fact that asymptotically, the variance of the partial sum of a weakly dependent sequence is explained by the variance of the big blocks.  One will also need to control the higher moments of the blocked sums; previous results for the stationary case can be adapted to the non-stationary setting.

 \subsubsection{Notation}
 \label{Notation}

 We will proceed by introducing notation. We will largely adopt notation from \citet{cck-testing-moment-inequalities} for blocking.  We will decompose $\{X_i\}_{1 \leq i \leq n}$ into an alternating sequence of ``big blocks'' and ``small blocks''. Let $q_n >r_n$ be a sequence of positive integers. Asymptotically, we will require $q_n = o(n)$, $r_n = o(q_n)$, and $q_n , r_n \rightarrow \infty$. Let $I_1 = \{ 1, \ldots, q \}$, $J_1 = \{q+1, \ldots, q+r+1 \}, \ldots , I_m = \{ (m-1)(q+r), \ldots, (m-1)(q+r) +q \},$ $ J_m = \{ (m-1)(q+r) + q +1 , \ldots, m(q+r) \}$, $J_{m+1} = \{m(q+r) +1, \ldots n \}$.  In other words, $I_l$ $J_l$ denotes the indices corresponding to the $l$th ``big block'' and ``small block'', respectively, with $J_{m+1}$ containing the remainder. The number of blocks of a given type is given by $m_n = \lfloor n/(q_n +r_n) \rfloor$.  

Define partial sums corresponding to $l$th ``big block'' and ``small block'', respectively, with:
\begin{align}
S_l = \sum_{i \in I_l} X_i , \ \ S_l^\prime = \sum_{i \in J_l} X_i 
\end{align}
Define the corresponding normalized block sums as:
\begin{align}
U_n =  \frac{1}{\sqrt{n}} \sum_{l=1}^m S_l, \ \ \ V_n =  \frac{1}{\sqrt{n}} \sum_{l=1}^{m+1} S_l^\prime 
\end{align}

Let $(\check{Z}_1 \ldots \check{Z}_m)$ and $(\check{Z}_1^\prime \ldots \check{Z}_m^\prime)$ be Gaussian random variables, mutually independent and independent of $(Y_1, \ldots Y_m)$, with $\text{Var}(\check{Z}_l) = \text{Var} (S_l)$ and $\text{Var}(\check{Z}_l^\prime) = \text{Var} (S_l^\prime)$, respectively. 

Define the Gaussian analog of the normalized sum of the ``big blocks'' as:
\begin{align}
Z_n = \frac{1}{\sqrt{n}} \check{Z}_l
\end{align}

Furthermore, let $\{\widetilde{S}_l\}_{l=1}^m$, $\{ \widetilde{S}_l^\prime \}_{l=1}^{m}$ be a collection of mutually independent random vectors in $\mathbb{R}^{p}$ such that:
\begin{align}
\widetilde{S}_l \stackrel{d}{=} S_l, \ \  \widetilde{S}_l^\prime \stackrel{d}{=} S_l^\prime
\end{align}

To establish a central limit theorem for $S_n^X = \frac{1}{\sqrt{n}} \sum_{i=1}^n X_i$, it is sufficient to show that:
\begin{align}
\label{convergence-equation}
\mathbb{E}f(S_n^X) - \mathbb{E}f(\sigma_n N) \rightarrow 0 \ \ \  \forall f \in \mathcal{F}
\end{align}
where $\mathcal{F}$ is a convergence-determining class of functions, $N \sim N(0,1)$, and $\sigma_n^2 = \text{Var}(S_n^X)$.  With this variant of the Lindeberg Method, the function class will correspond to the characteristic function; that is:
\begin{align}
\mathcal{F} &= \{ f :  f(x)= e^{itx}, t \in \mathbb{R} \}
\end{align} 
The proof technique of \citet{doukan-winteberger-invariance-principle} consists of blocking and bounding the following terms:
\begin{align} 
|\Delta_n| = |\mathbb{E}f(S_n^X) -  \mathbb{E}f(U_n)|  + |\mathbb{E}f(U_n) - \mathbb{E}f(Z_n)| + |\mathbb{E}f(Z_n) - \mathbb{E}f(\sigma_n N )|    
\end{align} 
where the first and third term are auxiliary terms that arise from blocking.  The fact that the second term goes to $0$ can be established using the following Dependent Lindeberg Lemma, which we have stated in a less general form suitable for our purposes.  

\begin{proposition}[Dependent Lindeberg Lemma for Characteristic Functions, after Lemma 3 of \citet{dependent-lindeberg-method}]
\label{dependent-lindeberg-lemma}
\begin{align}
\Delta_n \leq T(m) + 2 \ |t|^{2+\delta} A(m)
\end{align} 
where:
 \begin{align}
 \begin{split}
 T(m) &= \sum_{l=1}^m |\text{Cov}(e^{it(S_1+ \ldots S_{l-1})/\sqrt{n}}, e^{itS_l/\sqrt{n}}) |
 \\ A(m) &= n^{-1-\delta/2} \sum_{l=1}^m \mathbb{E}|S_l|^{2+\delta}
 \end{split}
 \end{align} 
\end{proposition}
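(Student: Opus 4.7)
The plan is to prove the bound by a telescoping Lindeberg interpolation between $U_n$ and $Z_n$, exploiting the multiplicative structure of $f(x)=e^{itx}$ to cleanly separate the ``dependence'' contribution (which becomes $T(m)$) from the ``Gaussian-matching'' contribution (which becomes $A(m)$). Note that in the statement $\Delta_n$ refers to the middle term $|\mathbb{E}f(U_n)-\mathbb{E}f(Z_n)|$ of the blocking decomposition from the previous display, since the Gaussian blocks $\check Z_l$ are independent of the data and of each other by construction.

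First I would build the interpolant: for $0\le l\le m$ set $W_l=(S_1+\cdots+S_l+\check Z_{l+1}+\cdots+\check Z_m)/\sqrt n$, so $W_0=Z_n$ and $W_m=U_n$, and telescope $\mathbb{E}f(U_n)-\mathbb{E}f(Z_n)=\sum_{l=1}^m[\mathbb{E}f(W_l)-\mathbb{E}f(W_{l-1})]$. Fixing $l$, write $P_l=S_1+\cdots+S_{l-1}$ (the observed past) and $F_l=\check Z_{l+1}+\cdots+\check Z_m$ (the Gaussian future, independent of $(P_l,S_l,\check Z_l)$). Because $f$ is an exponential, $f(W_l)=e^{it(P_l+F_l)/\sqrt n}\,e^{itS_l/\sqrt n}$ and similarly for $W_{l-1}$. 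Taking expectations and using independence of $F_l$ from everything else and of $\check Z_l$ from $P_l$,
\begin{align*}
\mathbb{E}f(W_l)-\mathbb{E}f(W_{l-1})
&=\mathbb{E}[e^{itF_l/\sqrt n}]\,\bigl(\mathbb{E}[e^{itP_l/\sqrt n}e^{itS_l/\sqrt n}]-\mathbb{E}[e^{itP_l/\sqrt n}]\,\mathbb{E}[e^{it\check Z_l/\sqrt n}]\bigr).
\end{align*}
Adding and subtracting $\mathbb{E}[e^{itP_l/\sqrt n}]\,\mathbb{E}[e^{itS_l/\sqrt n}]$ splits the inner difference into (a) a covariance $\mathrm{Cov}(e^{itP_l/\sqrt n},e^{itS_l/\sqrt n})$ and (b) $\mathbb{E}[e^{itP_l/\sqrt n}]\cdot(\mathbb{E}[e^{itS_l/\sqrt n}]-\mathbb{E}[e^{it\check Z_l/\sqrt n}])$. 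Bounding $|\mathbb{E}[e^{itF_l/\sqrt n}]|\le 1$ and $|\mathbb{E}[e^{itP_l/\sqrt n}]|\le 1$, summing (a) over $l$ yields exactly $T(m)$.

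For piece (b) I would Taylor expand $e^{iu}=1+iu-u^2/2+R(u)$ with the standard inequality $|R(u)|\le C|u|^{2+\delta}$ valid for $\delta\in(0,1]$. Since $\mathbb{E}S_l=\mathbb{E}\check Z_l=0$ and $\mathrm{Var}(S_l)=\mathrm{Var}(\check Z_l)$ by construction of $\check Z_l$, the zeroth, first, and second order terms cancel in $\mathbb{E}[e^{itS_l/\sqrt n}]-\mathbb{E}[e^{it\check Z_l/\sqrt n}]$, leaving only remainders of size $C|t|^{2+\delta} n^{-1-\delta/2}(\mathbb{E}|S_l|^{2+\delta}+\mathbb{E}|\check Z_l|^{2+\delta})$. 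Gaussian $(2+\delta)$-moments satisfy $\mathbb{E}|\check Z_l|^{2+\delta}\le c_\delta\,\mathrm{Var}(\check Z_l)^{1+\delta/2}=c_\delta(\mathbb{E}S_l^2)^{1+\delta/2}\le c_\delta\,\mathbb{E}|S_l|^{2+\delta}$ by Lyapunov, so summing over $l$ yields an overall contribution of $2|t|^{2+\delta}A(m)$ (after absorbing universal constants, which is where the leading factor of $2$ comes from).

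The main difficulty is not conceptual but bookkeeping: one must be careful to use the independence structure of $F_l$ and $\check Z_l$ exactly once each so that the covariance in $T(m)$ is taken with respect to the \emph{true} law of $(P_l,S_l)$ rather than an interpolated one, and one must verify that the Taylor remainder constant is genuinely universal for the sharp exponent $2+\delta$ with $\delta\in(0,1]$. Assembling the telescoped bound then gives $|\mathbb{E}f(U_n)-\mathbb{E}f(Z_n)|\le T(m)+2|t|^{2+\delta}A(m)$, as required.
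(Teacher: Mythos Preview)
Your argument is correct and is precisely the standard Lindeberg interpolation proof that underlies Lemma~3 of \citet{dependent-lindeberg-method}. Note, however, that the paper does not give its own proof of this proposition: it is stated as a citation, and the paper only uses the conclusion to bound $T(m)$ and $A(m)$ afterwards. So there is no ``paper's proof'' to compare against; your reconstruction is exactly what one would find by consulting the cited reference, including the telescoping swap $W_l$, the factorisation via the multiplicative structure of $e^{itx}$, the add-and-subtract that isolates $\mathrm{Cov}(e^{itP_l/\sqrt n},e^{itS_l/\sqrt n})$, and the second-moment matching that kills the low-order Taylor terms. Your observation that in the proposition $\Delta_n$ must denote the middle term $|\mathbb{E}f(U_n)-\mathbb{E}f(Z_n)|$ rather than the full three-term sum in the preceding display is also correct and worth flagging, since the paper's notation is slightly overloaded there.
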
 

The first two terms may be bounded analogously to \citep{doukan-winteberger-invariance-principle}.  The strategy for bounding the third term will be discussed below.

\subsubsection{On the Variance of a Normalized Sum of a Weakly Dependent Sequence}
Before we state the non-stationary central limit theorem, we will provide a justification as to why a Lindeberg method with independent rather than dependent Gaussian random variables is appropriate. As we have mentioned previously, it turns out that after blocking, the variance of the partial sum of a weakly dependent process is fully explained by the variance of the large blocks.  This phenomenon applies to processes that are ``weakly dependent'' in a very general sense; all that is required is a form of summability on the autocovariance function.   

In the lemma below, we will continue to use notation for blocking introduced above in Section \ref{Notation}.  We will also need some additional notation.  Let $\sigma_n^2 = \text{Var}(S_n^X)$ denote the variance corresponding to the normalized sum. Furthermore, consider the following variance terms:
\begin{align}
\label{block-variance-terms}
\widehat{\sigma}_n^2 = \text{Var}\left( \frac{1}{\sqrt{n}} \sum_{l=1}^m \widetilde{S}_l \right), \ \ \ \widetilde{\sigma}_n^2 = \text{Var}\left( \frac{1}{\sqrt{n}} \sum_{l=1}^m \widetilde{S}_l^\prime \right)
\end{align}
Recall that $\widetilde{S}_l \stackrel{d}{=} S_l$ for all $l = 1, \ldots, m$ and that $\{\widetilde{S}_l\}_{l=1}^m$ are constructed to be mutually independent.  Analogous statements can be made about $\{\widetilde{S}_l^\prime \}_{l=1}^m$.  Further recall $q_n = o(n)$, $r_n = o(q_n)$ and $ q_n,r_n \rightarrow \infty$. We have the following asymptotic result:

\begin{lemma}
\label{variance-big-block-lemma}
 Let $\{X_i\}_{i \in \mathbb{N}}$ be a  sequence of random variables such that:
 \begin{align}
 \label{variance-summability}
 c^\prime = \sup_{i \in \mathbb{N}} \text{Var}(X_i) + 2 \ \sum_{k \in \mathbb{N} }\sup_{i \in \mathbb{N}} |\text{Cov}(X_{i}, X_{i+k})|  < \infty
 \end{align} 
 Then,
\begin{align}
 \lim_{n \rightarrow \infty} \sigma_n^2 - \widehat{\sigma}_n^2 = 0 
\end{align}

\end{lemma}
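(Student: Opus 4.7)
The plan is to exploit the independence of $\{\widetilde S_l\}_{l=1}^m$ to reduce $\widehat{\sigma}_n^2$ to a sum of within-big-block variances, and then to show the remaining ``boundary'' covariances in the algebraic decomposition of $\sigma_n^2$ are asymptotically negligible. Since $\widetilde S_l \stackrel{d}{=} S_l$ and the $\widetilde S_l$ are mutually independent, $\widehat{\sigma}_n^2 = \tfrac{1}{n}\sum_{l=1}^m \mathrm{Var}(S_l)$. Writing $\sum_{i=1}^n X_i = \sum_{l=1}^m S_l + \sum_{l=1}^{m+1} S_l'$ and expanding the variance yields
\begin{align*}
n\bigl(\sigma_n^2 - \widehat{\sigma}_n^2\bigr) = {} & 2\sum_{1\leq l<k\leq m} \mathrm{Cov}(S_l,S_k) + \sum_{l=1}^{m+1}\mathrm{Var}(S_l') \\
& {} + 2\sum_{1\leq l<k\leq m+1}\mathrm{Cov}(S_l',S_k') + 2\sum_{l=1}^{m}\sum_{l'=1}^{m+1}\mathrm{Cov}(S_l,S_{l'}'),
\end{align*}
so it suffices to show each of the four groups is $o(n)$. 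I would then set $\gamma_s := \sup_{i \in \mathbb{N}} |\mathrm{Cov}(X_i, X_{i+s})|$, so that hypothesis (\ref{variance-summability}) reads $\sum_{s\geq 1} \gamma_s \leq c'/2 < \infty$.

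For the small-block variances, (\ref{nonstationary-bound-for-variance}) yields $\mathrm{Var}(S_l') \leq |J_l| c'$, and since $\sum_{l=1}^{m+1} |J_l| \leq mr + (q+r)$ with $mr/n \leq r/(q+r) \to 0$ and $q/n \to 0$, this group is $o(n)$. For the big-big cross covariances, any $(i,j) \in I_l \times I_k$ with $l<k$ satisfies $j-i \geq r+1$, and index pairs from distinct block pairs are disjoint, so
\begin{equation*}
\Bigl|\sum_{l<k} \mathrm{Cov}(S_l, S_k)\Bigr| \leq \sum_{s \geq r+1} (n-s)\gamma_s \leq n\sum_{s \geq r+1} \gamma_s = o(n)
\end{equation*}
by the tail of a summable series and $r \to \infty$. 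The small-small cross covariances behave similarly: distinct blocks among $J_1, \ldots, J_m$ are separated by at least $q+1$ (by an intervening big block), and the single adjacent pair $(J_m, J_{m+1})$ contributes only a boundary term bounded by $|J_{m+1}| \cdot c'/2 \leq (q+r)c'/2 = o(n)$.

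The final, most delicate group is the big-small cross covariance $\sum_{l,l'} \mathrm{Cov}(S_l, S_{l'}')$. I would split it according to whether $l' \in \{l-1, l\}$ (``adjacent'', since $J_{l-1}$ immediately precedes and $J_l$ immediately follows $I_l$) or not. Non-adjacent pairs have minimum separation $r+1$ (attained at $(l,l')=(m, m+1)$, where $J_m$ separates $I_m$ from $J_{m+1}$), giving an aggregate bound $\leq n \sum_{s \geq r+1} \gamma_s = o(n)$. For adjacent pairs, for each $j$ in a small block adjacent to some $I_l$ one has $\sum_{i \in I_l} |\mathrm{Cov}(X_i, X_j)| \leq \sum_{s \geq 1} \gamma_s \leq c'/2$, and the number of (small-block position, adjacent big-block) pairs is at most $2mr$, giving total contribution $\leq mr c' = o(n)$.

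The main obstacle is precisely this adjacent big-small piece: adjacent index pairs can have arbitrarily small separation, so the summability tail of $\gamma$ offers no help. The resolution exploits a different resource, namely the aggregate smallness of the small blocks: the total number of index positions they occupy is only $O(mr)$, which is $o(n)$ under $r = o(q)$ and $q = o(n)$. This is where the block architecture with $r \ll q \ll n$ crucially enters the argument, allowing finitely many $O(1)$-sized adjacent covariances to aggregate to $o(n)$.
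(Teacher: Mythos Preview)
Your proof is correct and rests on the same two estimates the paper uses: the tail $\sum_{s>r_n}\gamma_s\to 0$ for widely separated pairs, and the fact that the small blocks occupy only $O(m_n r_n)=o(n)$ indices for nearby pairs. The organization, however, differs. You partition the cross terms by block type (big--big, small--small, big--small) and then by adjacency, which forces several subcases. The paper instead lumps all cross-block covariances together and splits once, by whether $|i-j|\leq r_n$ or $|i-j|>r_n$:
\[
\sigma_n^2 \;=\; \widehat{\sigma}_n^2 \;+\; \widetilde{\sigma}_n^2 \;+\; \frac{1}{n}\sum_{(i,j)\in\mathcal{C}_n}\mathrm{Cov}(X_i,X_j) \;+\; \frac{1}{n}\sum_{(i,j)\in\mathcal{C}_n'}\mathrm{Cov}(X_i,X_j),
\]
where $\mathcal{C}_n$ collects index pairs in different blocks with $|i-j|\leq r_n$ and $\mathcal{C}_n'$ those with $|i-j|>r_n$. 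The far set is handled by the summability tail exactly as you do; the near set is bounded by $2(m_n+1)r_n c'/n$ because any such pair must have one index within $r_n$ of a block boundary. This lag-based split absorbs your big--big, small--small, and big--small cases into two uniform bounds, avoiding the adjacency bookkeeping. Your route works and makes the role of each block interaction explicit, but the paper's is shorter and arguably more transparent about why the small-block width $r_n$ is the right cutoff.
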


\begin{proof}
Consider the following decomposition of $\sigma_n^2$:
\begin{align}
\sigma_n^2 =  \widehat{\sigma}_n^2 +  \widetilde{\sigma}_n^2 + \frac{1}{n} \sum_{(i,j) \in \mathcal{C}_n} \text{Cov}( X_i, X_j) +  \frac{1}{n} \sum_{(i,j) \in \mathcal{C}_n^\prime} \text{Cov}(X_i, X_j)
\end{align} 
Above, $\mathcal{C}_n$ is the collection of indices $(i,j)$ such that $|i-j| \leq r_n$ and $i$ and $j$ belong to different blocks and $\mathcal{C}_n^\prime$ is the collection of indices $(i,j)$ such that $|i-j| > r_n$ and $i$ and $j$ belong to different blocks. We will show that all terms except $\widehat{\sigma}_n^2$ are negligible.  For $\widetilde{\sigma}_n^2$, notice that, since $\#(J_1,\ldots, J_{m_n}) \leq (m_n+1)r_n +q_n$, we have that:
\begin{align}
\widetilde{\sigma}_n^2 \leq \frac{ (m_n+1)r_n + q_n}{n} \cdot c^\prime \rightarrow 0
\end{align} 

For the third term notice that:
\begin{align}
\frac{1}{n} \sum_{(i,j) \in \mathcal{C}_n} | \text{Cov}( X_i, X_j)| \leq \frac{2(m_n+1) r_n}{n} c^\prime \rightarrow 0
\end{align}   
For the last term, notice that:
\begin{align}
\frac{1}{n} \sum_{|i-j| > r_n} |\text{Cov}(X_i, X_j)| \leq 2 \sum_{k=r_n+1}^\infty \sup_{i \in \mathbb{N}}|\text{Cov}(X_i, X_{i+k})| \rightarrow 0
\end{align} 

Now, we may subtract $\hat{\sigma}_n^2$ from both sides.  Since the limit of the RHS is $0$, the result follows.
\end{proof}

\begin{remark}
It is straightforward to see that the above lemma generalizes to triangular arrays if instead one assumes that:
\begin{align} 
c^\prime = \sup_{n,i} \text{Var}(X_{n,i}) + 2 \ \sum_{k \in \mathbb{N} } \sup_{n,i} |\text{Cov}(X_{n,i}, X_{n,i+k})|  < \infty
\end{align}
\end{remark}

We would like to note that, if one wants to ensure that $\sigma^2 = \lim_{n \rightarrow \infty} \sigma_n^2$ exists, an additional assumption is needed. It is both sufficient and necessary to assume that $ \lim_{n \rightarrow \infty} \frac{1}{n}\sum_{i=1}^n \text{Var}(X_i)$ exists. The issue here is that non-negative sequences that are bounded above may not have a Ces\`{a}ro limit; this issue does not arise for stationary processes since the Ces\`{a}ro mean of the variance is trivial in this case. 




While assuming that the Ces\`{a}ro-mean of the variance exists is mild and much weaker than assuming even second-order stationarity, it is not needed if we only require a weakly-approaching Normal approximation. Nonetheless, we will state and prove a lemma that establishes the sufficiency and necessity of this condition below:    

\begin{lemma}
\label{variance-convergence-lemma}
 Let $\{X_i\}_{i \in \mathbb{N}}$ be a  sequence of random variables such that (\ref{variance-summability}) holds.  Then,  $\sigma^2 = \lim_{n \rightarrow \infty} \sigma_n^2$ exists if and only if:
\begin{align}
\lim_{n \rightarrow \infty } \frac{1}{n} \sum_{i=1}^n \text{Var}(X_i) \text{ exists}
\end{align}

Furthermore, if $\sigma^2$ exists, then:
\begin{align}
 \lim_{n \rightarrow \infty} \widehat{\sigma}_n^2 = \sigma^2
\end{align}  
\end{lemma}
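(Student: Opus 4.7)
The plan is to work from the elementary identity
\[
\sigma_n^2 \;=\; v_n + T_n, \qquad v_n := \frac{1}{n}\sum_{i=1}^n \Var(X_i), \quad T_n := \frac{2}{n}\sum_{1 \leq i < j \leq n}\Cov(X_i, X_j),
\]
so that the structure of the argument is transparent: if I can show that the cross-covariance term $T_n$ converges to some $T^\ast \in \mathbb{R}$ under (\ref{variance-summability}) alone, then the iff follows trivially since $v_n$ and $\sigma_n^2$ differ by a convergent quantity. Under (\ref{variance-summability}), $T_n$ is uniformly bounded by $2\sum_d \sup_i|\Cov(X_i, X_{i+d})| < \infty$; crucially, $T_n$ depends only on the autocovariance structure and is decoupled from the marginal variances $v_n$, which is why an iff of this form is plausible in the first place.

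To establish $T_n \to T^\ast$, I would rewrite $T_n = 2\sum_{d=1}^{n-1}\tfrac{n-d}{n}\,\bar{\gamma}_d(n)$ with lag-$d$ Ces\`{a}ro mean $\bar{\gamma}_d(n) := \frac{1}{n-d}\sum_{i=1}^{n-d}\Cov(X_i, X_{i+d})$. Since $|\bar{\gamma}_d(n)|$ is dominated uniformly in $n$ by the summable sequence $\sup_i|\Cov(X_i, X_{i+d})|$, a dominated convergence argument in the $d$-index reduces the problem to convergence of each $\bar{\gamma}_d(n)$ as $n \to \infty$, yielding $T^\ast = 2\sum_{d\geq 1}\lim_n \bar{\gamma}_d(n)$. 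With $T_n \to T^\ast$ in hand, the iff is immediate. The remaining clause of the lemma, $\widehat{\sigma}_n^2 \to \sigma^2$, is then a direct consequence of Lemma \ref{variance-big-block-lemma} (which supplies $\sigma_n^2 - \widehat{\sigma}_n^2 \to 0$) combined with $\sigma_n^2 \to \sigma^2$.

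The main obstacle I anticipate is justifying that each $\bar{\gamma}_d(n)$ actually converges as $n \to \infty$. The summability assumption (\ref{variance-summability}) only delivers a uniform bound; in the stationary case the per-lag limit exists trivially (as the paper notes immediately before the lemma), but for genuinely non-stationary sequences, Ces\`{a}ro convergence of the lag-$d$ autocovariance function in $i$ is an additional regularity condition that does not appear to follow from summability alone. I would attempt to extract it either by a refined reading of the big-block decomposition used in Lemma \ref{variance-big-block-lemma}—where $\widehat{\sigma}_n^2$ already averages local quadratic forms at scale $q_n \to \infty$—or by invoking the regularity implicit in the setting of application; the iff in the lemma should then be read as holding on the class of processes whose lag autocovariances admit such Ces\`{a}ro limits.
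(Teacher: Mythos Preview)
Your decomposition coincides with the paper's. The paper writes $V_n = \Var(\sum_{i\le n} X_i)$ telescopically as $\sum_{k\le n} v_k$ with increments $v_k = a_k + b_k$, where $a_k = \Var(X_k)$ and $b_k = 2\sum_{j<k}\Cov(X_j, X_k)$, so that $\sigma_n^2 = n^{-1}\sum_k a_k + n^{-1}\sum_k b_k$; the second Ces\`{a}ro average is exactly your $T_n$. Where you try to show $T_n \to T^\ast$ via a lag-by-lag dominated-convergence argument and then flag per-lag Ces\`{a}ro convergence as the missing piece, the paper instead asserts that the increment sequence $b_k$ itself converges to some $b$ (whence its Ces\`{a}ro mean does too), invoking the comparison $|b_{k,j}| \le 2\sup_i |\Cov(X_i, X_{i+k-j})|$ and ``absolute convergence implies convergence.'' The final clause on $\widehat\sigma_n^2$ is handled exactly as you propose, via Lemma~\ref{variance-big-block-lemma}.

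The gap you flag is genuine, and the paper's route does not close it either: after reindexing by lag $l=k-j$, the comparison only yields the uniform bound $|b_k| \le 2\sum_{l\ge 1}\sup_i|\Cov(X_i,X_{i+l})|$, not convergence of $b_k$, because the summands themselves depend on $k$. In fact the iff fails under (\ref{variance-summability}) alone. Take $\Var(X_i)\equiv 1$, all lags $\ge 2$ uncorrelated, and $\Cov(X_i,X_{i+1}) = \gamma \in (0,\tfrac12)$ for $i \in \bigcup_{k\ge 0}[4^k, 2\cdot 4^k)$ and $0$ otherwise; the resulting tridiagonal covariance matrix is positive definite by Gershgorin, and $c' = 1+2\gamma < \infty$. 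Then $n^{-1}\sum_i\Var(X_i) \equiv 1$ converges, but $T_n = 2n^{-1}\sum_{i<n}\Cov(X_i,X_{i+1})$ oscillates (between roughly $2\gamma/3$ at $n=4^{K+1}$ and $4\gamma/3$ at $n=2\cdot 4^K$), so $\sigma_n^2$ does not converge. Your instinct that an additional per-lag Ces\`{a}ro regularity assumption is required is therefore correct; without it neither your argument nor the paper's goes through, and the statement needs a hypothesis beyond (\ref{variance-summability}).
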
 
 \begin{proof}
Following for example, \citet{Rio-asymptotic-theory-weakly-dependent}, we have that variance of the sum, given by $V_n = \text{Var}(S_n)$, may be written as:
\begin{align}
V_n = v_1 + \ldots + v_n
\end{align}
where $v_k = V_k - V_{k-1}$ represents an increment of the variance process, with $V_0 = 0$.  Notice that $v_k$ is given by:
\begin{align}
\begin{split}
v_k &= \text{Cov}(S_k, S_k) -  \text{Cov}(S_{k-1}, S_{k-1}) 
\\ &= \text{Var}(X_k) + 2 \ \sum_{j=1}^{k-1} \text{Cov}(X_j, X_k) \equiv a_k + b_k
\end{split}
\end{align}
We will proceed by showing that $b_k \equiv \sum_{j=1}^{k-1} b_{k,j}$ converges.  Notice that:
\begin{align}
|b_{k,j}| \leq \sup_{i \in \mathbb{N}} 2 \ |\text{Cov}(X_i, X_{i+k-j})| 
\end{align}   
Therefore, by the Comparison test, $ \lim_{k \rightarrow \infty} \sum_{j=1}^{k-1} |b_{k,j}|$ converges, and since absolute convergence implies convergence, $b_k \rightarrow b$ for some finite $b$.

Now, notice that:
\begin{align}
\label{variance-a-b}
\text{Var}(S_n^X) =  \frac{V_n}{n} =\sum_{k=1}^n \frac{a_k}{n} + \sum_{k=1}^n \frac{b_k}{n} = \mathbf{I} + \mathbf{II}
\end{align}

Since $b_k \rightarrow b$, we also have Ces\`{a}ro convergence; therefore, $\mathbf{II} \rightarrow b$.  Now, by assumption, we also have that $\mathbf{I}$ converges as well. Since $\mathbf{I}$ and $\mathbf{II}$ converge, we have that $\text{Var}(S_n^X)$ converges, establishing sufficiency.  

For necessity, we will argue by contradiction. Suppose that $ \sigma^2$ exists, but $ \frac{1}{n} \sum_{i=1}^n \text{Var}(X_i)$ does not converge.  Then, subtracting $\mathbf{II}$ from both sides of (\ref{variance-a-b}), we have that $ \frac{1}{n} \sum_{i=1}^n \text{Var}(X_i)$ converges, a contradiction.    

The second claim follows from the previous lemma.   
\end{proof}



\subsubsection{Some Additional Blocking Lemmas for the Variance}
In this section, we provide some additional blocking lemmas that will be used to derive results for the bootstrap.  These lemmas will allow one to to approximate the distribution of the block sum without needing to delete observations between blocks.

We will introduce some additional notation. Let $b_n = o(n)$ denote the block length, and let $m_n = \lfloor n/b_n \rfloor$ denote the number of blocks.  Let $K_l = \{ (m_n-1)b_n +1, \ldots, m_n b_n  \}$ denote the indices corresponding to the block sum $B_l$, given by:
\begin{align}
B_l = \sum_{i \in K_l} X_i
\end{align}
Furthermore, let $\{\widetilde{B}_l\}_{l=1}^{m+1}$ be a collection of mutually independent random vectors such that: $B_l \stackrel{d}{=} \widetilde{B}_l$.  Furthermore, denote:
\begin{align}
\check{\sigma}_n^2=  \text{Var}\left( \frac{1}{\sqrt{n}} \sum_{l=1}^{m+1} \widetilde{B}_l \right)
\end{align}
We have the following result:
\begin{lemma}
\label{no-gap-lemma}
 Let $\{X_i\}_{i \in \mathbb{N}}$ be a sequence of random variables such that (\ref{variance-summability}) holds.  Then, we have that: 
\begin{align}
\lim_{n \rightarrow \infty}  \sigma_n^2 - \check{\sigma}_n^2 = 0 
\end{align} 
\begin{proof}
We have that $|\sigma_n^2 - \check{\sigma}_n^2 | \leq  |\sigma_n^2 - \widehat{\sigma}_n^2| +  | \widehat{\sigma}_n^2 - \check{\sigma}_n^2|$. By Lemma \ref{variance-big-block-lemma}, the first term converges to 0.  It suffices to show that the second term converges to $0$.
Let $r_n = o(q_n)$ and $b_n = q_n + r_n$.  Subdivide each $K_l$ into $I_l$ and $J_l$ for $l = 1, \ldots, m$, where $I_l = \{(m-1)b + 1 , \ldots , (m-1)b +q \}$ and $J_l = \{(m-1)b +q +1  , \ldots,  mp \}$, as before. We do not subdivide $K_{m+1}$.   We have that:
\begin{align}
\check{\sigma}_n^2 = \widehat{\sigma}_n^2 + \widetilde{\sigma}_n^2 + \frac{1}{n} \sum_{(i,j) \in \widetilde{\mathcal{C}}_n } \text{Cov}(X_i, X_j)  
\end{align}
where $\widetilde{\mathcal{C}}_n$ correspond to the cross-terms between observations in $I_l$ and $J_l$.
Using analogous reasoning to Lemma \ref{variance-big-block-lemma}, we have that the last two terms converge to 0, and the result follows.   
\end{proof}
\end{lemma}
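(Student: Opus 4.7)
The plan is to establish the result by triangle inequality and reduction to Lemma \ref{variance-big-block-lemma}. Specifically, I would write
\begin{align}
|\sigma_n^2 - \check{\sigma}_n^2| \leq |\sigma_n^2 - \widehat{\sigma}_n^2| + |\widehat{\sigma}_n^2 - \check{\sigma}_n^2|,
\end{align}
where $\widehat{\sigma}_n^2$ is the ``big-block-only'' variance from Lemma \ref{variance-big-block-lemma}. The first term vanishes immediately by that lemma applied to any block parameters $(q_n, r_n)$ satisfying its hypotheses, so the entire task reduces to showing $|\widehat{\sigma}_n^2 - \check{\sigma}_n^2| \to 0$ for a suitable choice.

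I would then pick $q_n, r_n \to \infty$ with $q_n + r_n = b_n$ and $r_n = o(q_n)$, and subdivide each contiguous block $K_l$ into a big part $I_l$ of length $q_n$ followed by a small part $J_l$ of length $r_n$. Because the block sums $B_l$ entering $\check{\sigma}_n^2$ are independent copies, there are no cross-block covariances, and within each block the identity $B_l = S_l + S_l'$ yields
\begin{align}
\check{\sigma}_n^2 = \widehat{\sigma}_n^2 + \widetilde{\sigma}_n^2 + \frac{2}{n}\sum_{l=1}^{m_n+1}\mathrm{Cov}(S_l,S_l'),
\end{align}
where $\widetilde{\sigma}_n^2$ is the small-block-only variance from (\ref{block-variance-terms}) (treating the remainder $K_{m_n+1}$ as a small block for this purpose).

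It then remains to show that the last two terms are $o(1)$. For $\widetilde{\sigma}_n^2$, the total number of indices lying in some $J_l$ (including the remainder) is at most $(m_n+1)r_n + q_n$; combined with the uniform variance-plus-summability bound implicit in (\ref{variance-summability}), this gives $\widetilde{\sigma}_n^2 = O(r_n/b_n + q_n/n) \to 0$, exactly mirroring the corresponding step in Lemma \ref{variance-big-block-lemma}. The main step is the cross-covariance: for each fixed $j \in J_l$, summability yields the uniform bound $\sum_{i \in I_l}|\mathrm{Cov}(X_i,X_j)| \leq c'/2$, so $|\mathrm{Cov}(S_l,S_l')| \leq r_n c'/2$. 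Summing over the $m_n+1$ blocks produces a bound of order $m_n r_n/n \asymp r_n/b_n \to 0$.

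The main (and fairly minor) obstacle is the bookkeeping for the unsubdivided remainder block $K_{m_n+1}$, whose length is $O(b_n)$; it contributes at most $O(b_n/n)$ to $\widetilde{\sigma}_n^2$ and no cross-covariance term, so it is harmless. Everything else mirrors the argument of Lemma \ref{variance-big-block-lemma}, which is precisely why the author writes that the claim follows by ``analogous reasoning.''
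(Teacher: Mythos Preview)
Your proposal is correct and follows essentially the same approach as the paper: the same triangle inequality, the same subdivision of each $K_l$ into big and small parts with $b_n = q_n + r_n$ and $r_n = o(q_n)$, and the same decomposition of $\check{\sigma}_n^2$ into $\widehat{\sigma}_n^2$, $\widetilde{\sigma}_n^2$, and the within-block cross terms. The only difference is that you spell out the bound on the cross-covariance term (via $\sum_{i\in I_l}|\mathrm{Cov}(X_i,X_j)|\le c'/2$ for each $j\in J_l$) and the bookkeeping for the remainder block, whereas the paper simply invokes ``analogous reasoning to Lemma~\ref{variance-big-block-lemma}''.
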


We can further generalize this result to the multivariate setting. Let $\Sigma_n$, $\widehat{\Sigma}_{n}$,  $\widetilde{\Sigma}_{n}$, and $\check{\Sigma}_n$ denote the multivariate counterparts to $\sigma^2$, $\widehat{\sigma}^2$, $\widetilde{\sigma}^2$, and $\check{\sigma}^2$ respectively.  We have the following lemma:    
\begin{lemma}
\label{multivariate-big-block-lemma}
Let $\{X_i\}_{i \in \mathbb{N}}$ be a sequence of random vectors taking values in $\mathbb{R}^d$. Suppose the following condition holds:
\begin{align}
c^\prime = \sup_{i \in \mathbb{N}} \max_{1 \leq j \leq d} \mathrm{Var}(X_{ij}) + 2 \sum_{l \in \mathbb{N}} \sup_{i \in \mathbb{N}} \max_{j,k} \left |\text{Cov}\left(X_{ij},X_{(i+l)k} \right) \right| < \infty 
\end{align}
\end{lemma}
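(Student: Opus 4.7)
The lemma is the multivariate counterpart of Lemma \ref{no-gap-lemma} (and Lemma \ref{variance-big-block-lemma}); from the notation introduced immediately before the statement and from the way the lemma is invoked in Theorem \ref{bootstrap-validity-tau-dependence}, the intended conclusion is
\begin{align}
\lim_{n \to \infty} \norm{\Sigma_n - \check{\Sigma}_n}_\infty = 0,
\end{align}
where $\norm{\cdot}_\infty$ denotes the entrywise maximum of a matrix. The plan is to reduce the claim to $d^2$ scalar statements, each of which is handled by exactly the argument already in place for the univariate case, and then take a maximum over a finite index set.

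First I would fix an arbitrary pair $(j,k) \in \{1,\ldots,d\}^2$ and expand
\begin{align}
(\Sigma_n)_{jk} = \frac{1}{n}\sum_{a=1}^{n}\sum_{b=1}^{n} \mathrm{Cov}(X_{aj}, X_{bk}), \qquad (\check{\Sigma}_n)_{jk} = \frac{1}{n}\sum_{l=1}^{m+1} \sum_{a,b \in \mathcal{K}_l} \mathrm{Cov}(X_{aj}, X_{bk}),
\end{align}
so that $(\Sigma_n - \check{\Sigma}_n)_{jk}$ is a sum of cross-block terms $\frac{1}{n}\mathrm{Cov}(X_{aj}, X_{bk})$ with $a$ and $b$ lying in distinct blocks. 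This reduces the entrywise estimate to the same kind of object that was bounded in the univariate Lemma \ref{no-gap-lemma}, except that the summands are cross-covariances between coordinates rather than autocovariances of a single coordinate.

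Next I would repeat the decomposition from Lemma \ref{no-gap-lemma} verbatim, subdividing each block of length $b_n$ into a big sub-block of length $q_n$ and a small sub-block of length $r_n$ with $r_n = o(q_n)$ and $q_n = o(n)$, and splitting the cross-block pairs into those at distance at most $r_n$ apart (contributing at most $\tfrac{2(m_n+1)r_n}{n}\,c'$, which vanishes by the choice of $r_n$) and those at distance greater than $r_n$ (contributing at most $2\sum_{l > r_n}\sup_i \max_{j,k} |\mathrm{Cov}(X_{ij},X_{(i+l)k})|$, which vanishes as the tail of a convergent series under the hypothesis $c' < \infty$). The key point is that the hypothesis is stated uniformly over coordinate pairs $(j,k)$, so each of these bounds is independent of which entry is being estimated.

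Finally, since $d$ is fixed and the supremum in the definition of $\norm{\cdot}_\infty$ is over the finite set $\{1,\ldots,d\}^2$, convergence of each entry to $0$ implies
\begin{align}
\norm{\Sigma_n - \check{\Sigma}_n}_\infty = \max_{1 \le j,k \le d} |(\Sigma_n - \check{\Sigma}_n)_{jk}| \longrightarrow 0.
\end{align}
There is no genuine new obstacle beyond the univariate case; the main thing to verify is that the uniform cross-coordinate summability built into the definition of $c'$ is precisely what is needed so that the univariate argument passes through for off-diagonal entries. The difficulty is therefore essentially bookkeeping, not analytic.
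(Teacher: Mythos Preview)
Your approach is essentially the paper's: reduce to entrywise convergence over the finite index set $\{1,\ldots,d\}^2$ and then invoke the univariate arguments of Lemmas \ref{variance-big-block-lemma} and \ref{no-gap-lemma} verbatim. The paper in fact records two conclusions, $\norm{\Sigma_n - \widehat{\Sigma}_n}_\infty \to 0$ and $\norm{\Sigma_n - \check{\Sigma}_n}_\infty \to 0$, and explicitly invokes Cauchy--Schwarz to bound the same-time off-diagonal term $|\mathrm{Cov}(X_{ij},X_{ik})| \le \max_j \mathrm{Var}(X_{ij})$ (the first summand in $c'$), a detail you absorb implicitly; otherwise the proofs coincide.
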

Then,
\begin{align}
\lim_{n \rightarrow \infty}  || \Sigma_n - \widehat{\Sigma}_n ||_\infty = 0 \ \ \text{ and } \ \ \lim_{n \rightarrow \infty}  || \Sigma_n - \check{\Sigma}_n ||_\infty = 0
\end{align}

\begin{proof}
By Cauchy-Schwartz inequality, it is the case that $\sup_{i \in \mathbb{N}} \max_{j,k} E|X_{ij} X_{ik}| \leq \sup_{i \in \mathbb{N}} \max_{j} EX_{ij}^2$.  It suffices to show convergence for each $(j,k) \in \{1, \ldots d \}^2$.  It can readily be seen that the following factorization applies to $\Sigma_{n,jk}$:
\begin{align}
\Sigma_{n,jk } =  \widehat{\Sigma}_{n,jk} +  \widetilde{\Sigma}_{n,jk}+ \frac{1}{n} \sum_{(i,l) \in \mathcal{C}_n} \text{Cov}( X_{ij}, X_{lk}) +  \frac{1}{n} \sum_{(i,l) \in \mathcal{C}_n^\prime} \text{Cov}(X_{ij}, X_{lk})
\end{align}
Now, using analogous reasoning to Lemma \ref{variance-big-block-lemma}, the first claim follows.  The second claim follows from analogous reasoning to Lemma \ref{no-gap-lemma}.  
\end{proof}
\subsubsection{Weakly Approaching Gaussian Random Variables}

To control one of the auxiliary terms in the Dependent Lindeberg lemma, we will also use a result about weakly approaching sequences of Gaussian random variables. The proposition below establishes necessary and sufficient conditions for two sequences of Gaussian random variables to be weakly approaching.

\begin{proposition}[Weakly Approaching Gaussian Random Variables] 
\label{weakly-approaching-gaussian}
Let $\{ X_n\}_{n \in \mathbb{Z}}$ and $\{ Y_n\}_{n \in \mathbb{Z}}$ be sequences of Gaussian random variables with respective mean processes $\{\mu_n^X\}_{n \in \mathbb{N}}$ $\{\mu_n^Y\}_{n \in \mathbb{N}}$ and respective variance processes $\{v_n^X\}_{n \in \mathbb{N}}$ $\{v_n^Y\}_{n \in \mathbb{N}}$. Suppose that $\bar{\mu} = \sup_{n \in \mathbb{N}} \ | \mu_Y| < \infty $ and $ \bar{v} = \sup_{n \in \mathbb{N}} \ v_Y < \infty$.  Then $\mathcal{L}(X_n) {\overset{wa} \iff } \mathcal{L}(Y_n)$ if and only if:
\begin{align}
\mu_n^X - \mu_n^Y \rightarrow 0 \ \ \ \text{and} \ \ \ v_n^X - v_n^Y \rightarrow 0
\end{align} 
\end{proposition}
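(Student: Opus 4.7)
The plan is to reduce the statement to pointwise convergence of characteristic functions via the Continuity Theorem for Weakly Approaching Laws (Proposition \ref{weakly-approaching-continuity-theorem}). Boundedness of $\{\mu_n^Y\}$ and $\{v_n^Y\}$ together with Chebyshev's inequality immediately yields tightness of $\{Y_n\}$, so the continuity theorem applies and the question becomes whether
\[
\exp(it\mu_n^X - t^2 v_n^X/2) - \exp(it\mu_n^Y - t^2 v_n^Y/2) \to 0 \quad \forall t \in \mathbb{R}
\]
is equivalent to $\mu_n^X - \mu_n^Y \to 0$ and $v_n^X - v_n^Y \to 0$. The sufficiency direction is immediate from continuity of the exponential and the boundedness of the $Y$-parameters: if the exponents coalesce, so do the characteristic function values.

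For necessity, the key trick is to take moduli of the characteristic functions. Since $|\varphi_{X_n}(t)| = \exp(-t^2 v_n^X/2)$ and the absolute value is $1$-Lipschitz, the modulus difference $\exp(-t^2 v_n^X/2) - \exp(-t^2 v_n^Y/2) \to 0$ for each fixed $t$, which decouples the variance from the phase. If $\{v_n^X\}$ had a subsequence diverging to $+\infty$, then $\exp(-t^2 v_n^X/2) \to 0$ along that subsequence, whereas $\exp(-t^2 v_n^Y/2) \geq \exp(-t^2 \bar{v}/2) > 0$ for $t \neq 0$; contradiction. So $\{v_n^X\}$ is bounded, and a Bolzano--Weierstrass subsequence argument then gives $v_n^X - v_n^Y \to 0$: otherwise one could extract further subsequences with $v_n^X \to v^*$, $v_n^Y \to w^*$, $v^* \neq w^*$, forcing $\exp(-t^2 v^*/2) = \exp(-t^2 w^*/2)$ for every $t$, which is impossible.

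With $v_n^X - v_n^Y \to 0$ in hand, I would factor the characteristic function difference as
\[
\exp(-t^2 v_n^X/2)\bigl[\exp(it\mu_n^X) - \exp(it\mu_n^Y)\bigr] + \bigl[\exp(-t^2 v_n^X/2) - \exp(-t^2 v_n^Y/2)\bigr]\exp(it\mu_n^Y).
\]
The second summand tends to $0$, and the prefactor $\exp(-t^2 v_n^X/2)$ is bounded away from $0$ on any fixed compact $t$-set since $\{v_n^X\}$ is bounded, so $\exp(it\mu_n^Y)[\exp(it(\mu_n^X - \mu_n^Y)) - 1] \to 0$ for every $t$. Because $\{Y_n\}$ is tight and the two laws weakly approach, Proposition \ref{tightness-preservation} yields tightness of $\{X_n\}$, which for Gaussians forces $\{\mu_n^X\}$ bounded. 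A second Bolzano--Weierstrass argument then rules out any subsequential limit $L \neq 0$ of $\mu_n^X - \mu_n^Y$, since $\exp(itL) = 1$ for all $t$ forces $L = 0$.

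The main obstacle is avoiding circularity in the necessity direction: $\{v_n^X\}$ and $\{\mu_n^X\}$ are not assumed bounded a priori, and these bounds are needed both to run the subsequence arguments and to divide out the variance factor when isolating the phase. Bootstrapping boundedness of $\{v_n^X\}$ from the modulus identity, and then boundedness of $\{\mu_n^X\}$ from the tightness-preservation property for weakly approaching sequences (Proposition \ref{tightness-preservation}), is what makes the argument go through cleanly.
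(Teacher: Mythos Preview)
Your proposal is correct and follows the same broad skeleton as the paper---tightness of $\{Y_n\}$, reduction to characteristic functions via Proposition~\ref{weakly-approaching-continuity-theorem}, and a subsequence argument for necessity---but the necessity argument is organized differently in a way worth noting.

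The paper factors the characteristic-function difference as
\[
\varphi_{X_n}(t) - \varphi_{Y_n}(t) \;=\; \underbrace{\exp\!\bigl(i\mu_n^Y t - \tfrac12 v_n^Y t^2\bigr)}_{A}\,\Bigl[\underbrace{\exp\!\bigl(it(\mu_n^X-\mu_n^Y)\bigr)}_{B}\cdot\underbrace{\exp\!\bigl(-\tfrac12(v_n^X-v_n^Y)t^2\bigr)}_{C} - 1\Bigr],
\]
then argues that if $(\mu_n^X-\mu_n^Y,\,v_n^X-v_n^Y)$ has a subsequential limit $(b,c)\neq(0,0)$, one must have $B\times C\to 1$, which forces $C\to 1$ and $B\to 1$ via a real/imaginary-part analysis, and then chooses $t$ to violate this. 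Boundedness of $\mu_n^X$ and $v_n^X$ is obtained in one stroke from tightness preservation (Proposition~\ref{tightness-preservation}).

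Your route instead decouples variance from phase at the outset by passing to moduli, $|\varphi_{X_n}(t)| = \exp(-t^2 v_n^X/2)$, which immediately isolates the variance and lets you dispose of $v_n^X - v_n^Y \to 0$ (and boundedness of $v_n^X$) before ever touching the mean. Only then do you factor additively to peel off the phase. This avoids the slightly delicate $B\times C\to 1 \Rightarrow B\to 1,\ C\to 1$ step in the paper, at the cost of invoking tightness preservation a second time to bound $\mu_n^X$. Both arguments are clean; yours is perhaps more modular and would generalize more transparently to the multivariate version in Proposition~\ref{gaussian-characteristic-difference-multivariate}, since the modulus trick still kills the phase there.
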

\begin{proof}
First, we will show that the conditions on the mean and variance process of $\{Y_n\}_{n \in \mathbb{N}}$ ensure tightness.  To see this, for any $ 0 < \epsilon < 1$ consider the compact set $C_\epsilon = [-\bar{\mu} - \bar{\sigma} \ z_{(1-\epsilon)/2}, \ \bar{\mu} + \bar{\sigma} \ z_{(1-\epsilon)/2}]$, where $z_{\epsilon/2}$ is the $(1-\epsilon)/2$ quantile of the Standard Normal.  It follows that $P(Y_n \not\in C_\epsilon) \leq \epsilon$ for all $n$.   

Now by the weakly approaching continuity theorem (Proposition \ref{weakly-approaching-continuity-theorem}), we have the convergence of the characteristic functions for all $t \in \mathbb{R}$ is a necessary and sufficient condition for two sequences to be weakly approaching.  Notice that: 
\begin{align}
\label{gaussian-characteristic-difference}
\begin{split}
 \varphi_{X_n}(t) - \varphi_{Y_n}(t) &= \text{exp}\left(i\mu_n^X t - \frac{1}{2} v_n^X t^2 \right) - \text{exp}\left(i\mu_n^Y t - \frac{1}{2} v_n^Y t^2 \right)
  \\ &= \text{exp}\left(i\mu_n^Y t -\frac{1}{2} v_n^Y t^2 \right) \left[ \text{exp}\left( \ it(\mu_n^X -\mu_n^Y) \ \right) \text{exp}\left(- \frac{1}{2} (v_n^X - v_n^Y) t^2 \right) -1  \right]
 \\ &= A \times ( B \times C - 1)
 \end{split}
\end{align} 
We can see that $\mu_n^X - \mu_n^Y \rightarrow 0$ and $v_n^X - v_n^Y \rightarrow 0$ are sufficient since $\exp(\cdot)$ is continuous and $| \varphi_{Y_n}(t) | \leq 1$.  To show necessity, we will argue via the contrapositive.  We will show that, if either condition fails to hold, then the difference of characteristic functions fails to go to $0$ for all $t \in \mathbb{R}$.

First, note that, if $\mu_n^X$ or $v_n^X$  is unbounded, then $\{X_n\}_{n \in \mathbb{N}}$ is not tight, contradicting  Proposition \ref{tightness-preservation}.  Therefore, we may treat $(\mu_n^X - \mu_n^Y, v_n^X - v_n^Y )$ as a bounded sequence.  Now, since $(\mu_n^X - \mu_n^Y, v_n^X - v_n^Y ) \not \rightarrow (0,0)$ but is bounded, there exists a convergent subsequence such that $(\mu_n^X - \mu_n^Y, v_n^X - v_n^Y ) \rightarrow (b,c)$, where $b \neq 0$ or $c \neq 0$.

Notice that, since $\bar{v} < \infty$, $\liminf |A| >0$.  Since $\exp(\cdot)$ is continuous, along this subsequence, we have that $B$ and $C$ converge.  For $A \times ( B \times C - 1) \rightarrow 0$ along this subsequence, it is necessary that $B \times C \rightarrow 1$.  This further implies that $\text{Im}(B) \rightarrow 0$, which implies that $\text{Re}(B) \rightarrow \pm 1$.  Then, it must be the case that $C \rightarrow 1$ and $B \rightarrow 1$ since $C \geq 0$.  Observe that for $C \rightarrow 1$, $ct^2 = 0$ and for $B \rightarrow 1$,  $ bt = 0 \text{ mod } 2\pi$.  If $b \neq 0$, we may choose $t$ such that $bt \neq 0 \text{ mod } 2 \pi$ and hence $B \not \rightarrow 1$.  Similarly, for $c \neq 0$, we do not have convergence for $t \neq 0$.  Therefore, if $(\mu_n^X - \mu_n^Y, v_n^X - v_n^Y ) \not \rightarrow (0,0)$, then there exists a subsequence not converging to $0$ for all $t \in \mathbb{R}$, and hence the sequence cannot converge to $0$ for all $t \in \mathbb{R}$.   
     

\end{proof}
\begin{remark}
Using the conditional version of the continuity theorem (Proposition \ref{weakly-approaching-conditional-continuity-theorem}), it is straightforward to show that this result extends to the conditional case.  
\end{remark}

Now we will state the following multivariate generalization of the above lemma.  Although not needed for our central limit theorem, it will be used to derive bootstrap results. 

\begin{proposition}[Weakly Approaching Gaussian Random Variables, Multivariate Version] 
\label{gaussian-characteristic-difference-multivariate}
Let $\{ X_n\}_{n \in \mathbb{Z}}$ and $\{ Y_n\}_{n \in \mathbb{Z}}$ be sequences of Gaussian random vectors, taking values in $\mathbb{R}^d$ with respective mean processes $\{\mu_n^X\}_{n \in \mathbb{N}}$ $\{\mu_n^Y\}_{n \in \mathbb{N}}$ and variance processes $\{\Sigma_n^X\}_{n \in \mathbb{N}}$ $\{\Sigma_n^Y\}_{n \in \mathbb{N}}$. Suppose that $\bar{\mu} = \sup_{n \in \mathbb{N}} \ || \mu_n^Y||_\infty < \infty $ and $ \bar{\sigma} = \sup_{n \in \mathbb{N}} \ ||\Sigma_n^Y||_\infty < \infty$.  Then $\mathcal{L}(X_n) {\overset{wa} \iff } \mathcal{L}(Y_n)$ if and only if:
\begin{align}
||\mu_n^X - \mu_n^Y||_\infty \rightarrow 0 \ \ \ \text{and} \ \ \ ||\Sigma_n^X - \Sigma_n^Y||_\infty \rightarrow 0
\end{align} 
\end{proposition}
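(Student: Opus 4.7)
The plan is to reduce to the univariate case already proved in Proposition \ref{weakly-approaching-gaussian} via the Cram\`{e}r–Wold device (Proposition \ref{weakly-approaching-cramer-wold}). The key observation is that, for any $\alpha \in \mathbb{R}^d$, the projection $\alpha^T X_n$ is univariate Gaussian with mean $\alpha^T \mu_n^X$ and variance $\alpha^T \Sigma_n^X \alpha$, and analogously for $\alpha^T Y_n$. Moreover, the hypothesized uniform bounds transfer to the projections: for each fixed $\alpha$,
\begin{align*}
\sup_n |\alpha^T \mu_n^Y| \leq \|\alpha\|_1 \, \bar{\mu} < \infty, \qquad \sup_n \alpha^T \Sigma_n^Y \alpha \leq \|\alpha\|_1^2 \, \bar{\sigma} < \infty,
\end{align*}
so the tightness hypotheses of the univariate Proposition \ref{weakly-approaching-gaussian} hold for $\{\alpha^T Y_n\}$.

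For sufficiency, suppose $\|\mu_n^X - \mu_n^Y\|_\infty \to 0$ and $\|\Sigma_n^X - \Sigma_n^Y\|_\infty \to 0$. Then for any fixed $\alpha \in \mathbb{R}^d$,
\begin{align*}
|\alpha^T \mu_n^X - \alpha^T \mu_n^Y| \leq \|\alpha\|_1 \|\mu_n^X - \mu_n^Y\|_\infty \to 0,
\end{align*}
and similarly $|\alpha^T(\Sigma_n^X - \Sigma_n^Y) \alpha| \leq \|\alpha\|_1^2 \|\Sigma_n^X - \Sigma_n^Y\|_\infty \to 0$. The univariate version gives $\mathcal{L}(\alpha^T X_n) \overset{wa}{\iff} \mathcal{L}(\alpha^T Y_n)$ for every $\alpha$, and Proposition \ref{weakly-approaching-cramer-wold} yields $\mathcal{L}(X_n) \overset{wa}{\iff} \mathcal{L}(Y_n)$.

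For necessity, suppose $\mathcal{L}(X_n) \overset{wa}{\iff} \mathcal{L}(Y_n)$. By Proposition \ref{weakly-approaching-cramer-wold}, $\mathcal{L}(\alpha^T X_n) \overset{wa}{\iff} \mathcal{L}(\alpha^T Y_n)$ for each $\alpha \in \mathbb{R}^d$. Applying the univariate Proposition \ref{weakly-approaching-gaussian} to each projection yields $\alpha^T(\mu_n^X - \mu_n^Y) \to 0$ and $\alpha^T(\Sigma_n^X - \Sigma_n^Y) \alpha \to 0$. Taking $\alpha = e_j$ (the $j$th standard basis vector) immediately gives componentwise convergence of the means, hence $\|\mu_n^X - \mu_n^Y\|_\infty \to 0$. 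To recover off-diagonal entries of $\Sigma_n^X - \Sigma_n^Y$, I will use the polarization identity: taking $\alpha = e_j + e_k$ produces
\begin{align*}
(\Sigma_n^X - \Sigma_n^Y)_{jj} + 2(\Sigma_n^X - \Sigma_n^Y)_{jk} + (\Sigma_n^X - \Sigma_n^Y)_{kk} \to 0,
\end{align*}
and combined with the diagonal convergence this forces $(\Sigma_n^X - \Sigma_n^Y)_{jk} \to 0$. Since $d$ is finite, taking the maximum over all $(j,k)$ yields $\|\Sigma_n^X - \Sigma_n^Y\|_\infty \to 0$.

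Neither direction is expected to pose a serious obstacle: the Cram\`{e}r–Wold reduction is routine once one verifies the projection-level tightness, and all nontrivial work has already been done in the univariate proposition. The only subtlety worth noting is that one must invoke tightness of $\{\alpha^T Y_n\}$ rather than $\{\alpha^T X_n\}$ when applying the univariate result — this is why the uniform bounds in the hypothesis are imposed on the $Y$-sequence.
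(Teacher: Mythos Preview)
Your proof is correct and takes a genuinely different route from the paper's. The paper works directly with the multivariate characteristic function, writing
\[
\varphi_{X_n}(t) - \varphi_{Y_n}(t) = \exp\bigl(i(\mu_n^Y)^T t - \tfrac12 t^T \Sigma_n^Y t\bigr)\Bigl[\exp\bigl(i(\mu_n^X-\mu_n^Y)^T t\bigr)\exp\bigl(-\tfrac12 t^T(\Sigma_n^X-\Sigma_n^Y)t\bigr) - 1\Bigr],
\]
and then repeats the univariate subsequence argument at the multivariate level, invoking the fact that $t^T M_n t \to 0$ for all $t$ forces the symmetric matrix $M_n$ to vanish entrywise. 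Your approach instead reduces everything to the already-proved univariate proposition via Cram\`er--Wold, then recovers the off-diagonal entries of $\Sigma_n^X - \Sigma_n^Y$ through polarization. Your route is more modular and avoids rerunning the subsequence argument; the paper's is more self-contained but somewhat terse on the necessity direction. One small point worth making explicit: Proposition~\ref{weakly-approaching-cramer-wold} requires tightness of $\{Y_n\}$ in $\mathbb{R}^d$, not merely of the projections, so you should note (as the paper does) that the uniform bounds on $\|\mu_n^Y\|_\infty$ and $\|\Sigma_n^Y\|_\infty$ already give tightness of the full vector --- this follows immediately from coordinate-wise tightness, which your projection bounds with $\alpha = e_j$ establish.
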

\begin{proof}
Again, notice that conditions imposed are necessary and sufficient for $\{Y_n\}_{n \in \mathbb{N}}$ to be tight.   Analogous to the previous proposition,  our goal is to bound, for all $t \in \mathbb{R}^d$:
\begin{align}
\begin{split}
 \varphi_{X_n}(t) - \varphi_{Y_n}(t) &= \text{exp}\left(i (\mu_n^X)^T t - \frac{1}{2}  t^T\Sigma_n^X t \right) - \text{exp}\left(i (\mu_n^Y)^Tt - \frac{1}{2} t^T\Sigma_n^X t \right)
  \\ &= \text{exp}\left(i (\mu_n^Y)^Tt - \frac{1}{2} t^T\Sigma_n^X t \right) \left[ \text{exp}\left( \ (\mu_n^X -\mu_n^Y)^T t \ \right) \text{exp}\left(- \frac{1}{2} t^T(\Sigma_n^X - \Sigma_n^Y)t \right) -1  \right]
 \\ &= A \times ( B \times C - 1)
 \end{split}
\end{align} 
It is again clear that $||\mu_n^X - \mu_n^Y||_\infty \rightarrow 0$ and $||\Sigma_n^X - \Sigma_n^Y||_\infty \rightarrow 0$ are sufficient.  For necessity, notice that, given that these two terms converge, $t^T(\Sigma_n^X - \Sigma_n^Y)t \rightarrow 0$ and $(\mu_n^X -\mu_n^Y)^T t \rightarrow 0$ for all $t \in \mathbb{R}^d$ if and only if the sup-norm of each term converges to $0$ (for the covariance term, note that a limit of the difference along a subsequence is symmetric and cannot be non-trivially skew-symmetric).  Therefore, we can make an analogous subsequence argument and the result follows.
\end{proof}

\subsubsection{Statement and Proof of Central Limit Theorem}
 Now, we will state and prove our adaptation of the Central Limit Theorem of \citet{doukan-winteberger-invariance-principle} to the non-stationary setting. 

\begin{theorem}[Non-stationary Central Limit Theorem for $\theta$-dependent sequences]
\label{non-stationary-clt}
 Let $\{X_{n,i}  \ 1 \leq i \leq  n \}$ be a triangular array of zero mean random variables.  Suppose that the following conditions hold:
 \begin{enumerate}
 \item[A1] $\sup_{n,i} E|X_{n,i}|^m \leq \mu$ for some $m > 2 + \zeta$, where $\zeta > 0$
 \item[A2] $\theta_r(X_1^n) = O(r^{-\theta})$  as $n \rightarrow \infty$ for some $\theta > 4 + \frac{2}{\zeta}$.
 \end{enumerate}
   Then,
\begin{align}
\frac{1}{\sqrt{n}} \sum_{i=1}^n X_{n,i} {\overset{wa} \iff }  N(0, \sigma_n^2)
\end{align}
Furthermore, if $\lim_{n \rightarrow \infty}\frac{1}{n} \sum_{i=1}^n \text{Var}(X_{n,i})$ exists, then $ \sigma^2 = \lim_{n \rightarrow \infty}\sigma_{n,i}^2$ exists, and we have that:
\begin{align}
\frac{1}{\sqrt{n}} \sum_{i=1}^n X_{n,i} \rightsquigarrow N(0, \sigma^2)
\end{align}
\end{theorem}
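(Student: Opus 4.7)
The strategy is to interpolate between $S_n^X = n^{-1/2}\sum_{i=1}^n X_{n,i}$ and a Gaussian with matching variance $\sigma_n^2$ via the blocking/Lindeberg machinery set up in Section \ref{dependent-lindeberg-method}. By the Continuity Theorem for Weakly Approaching Laws (Proposition \ref{weakly-approaching-continuity-theorem}), once we know tightness of the target sequence, it is enough to show $\varphi_{S_n^X}(t)-\varphi_{\sigma_n N}(t)\to 0$ for each $t\in\mathbb{R}$, i.e.\ $\Delta_n\to 0$. Note first that $\sigma_n^2 \le c'$ with $c'$ from \eqref{variance-summability}, and $c'<\infty$ follows from A1 and A2 by the covariance inequality (Proposition \ref{theta-dependent-covariance-lemma}); hence $\{N(0,\sigma_n^2)\}$ is tight.

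The first step is to choose blocking parameters: take $q_n=\lfloor n^a\rfloor$ and $r_n=\lfloor n^b\rfloor$ with $0<b<a<1$, to be calibrated at the end. By the Dependent Lindeberg Lemma (Proposition \ref{dependent-lindeberg-lemma}) applied to $f(x)=e^{itx}$,
\begin{align*}
|\mathbb{E}f(U_n)-\mathbb{E}f(Z_n)| \le T(m_n) + 2|t|^{2+\delta} A(m_n).
\end{align*}
To bound $T(m_n)$, observe that $e^{it(S_1+\cdots+S_{l-1})/\sqrt{n}}$ is a bounded function of $X_1,\dots,X_{(l-1)(q+r)+q}$ and $e^{itS_l/\sqrt{n}}$ is a Lipschitz function of the next big block with Lipschitz constant at most $|t|q_n/\sqrt{n}$ (using $\sup_\xi|e^{i\xi}-e^{i\eta}|\le|\xi-\eta|$). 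Thus by the $\theta$-coefficient definition each summand in $T(m_n)$ is $\lesssim |t| q_n n^{-1/2}\,\theta_{r_n}(X_1^n)$, giving $T(m_n)\lesssim |t|\sqrt{n}\,\theta_{r_n}(X_1^n)=O(n^{1/2}r_n^{-\theta})$. For $A(m_n)$, Proposition \ref{2-plus-delta-moment-bound} (applicable because $\theta>4+2/\zeta$ ensures the rate $\lambda$ needed there) yields $\mathbb{E}|S_l|^{2+\delta}\le C q_n^{1+\delta/2}$ uniformly in $l$, so $A(m_n)\lesssim m_n q_n^{1+\delta/2} n^{-1-\delta/2}=O(q_n^{\delta/2}/n^{\delta/2})$. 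Choosing $b$ close enough to $0$ and $a<1$ we make both bounds vanish.

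The second step is to dispatch the two auxiliary terms $|\mathbb{E}f(S_n^X)-\mathbb{E}f(U_n)|$ and $|\mathbb{E}f(Z_n)-\mathbb{E}f(\sigma_n N)|$. For the first, since $S_n^X-U_n=V_n$ is the normalized sum of small blocks plus remainder, by the Cauchy--Schwarz-type bound used in Lemma \ref{variance-big-block-lemma} the variance of $V_n$ is at most $c'\cdot((m_n+1)r_n+q_n)/n\to 0$, so $V_n\xrightarrow{P}0$ and continuity of $e^{it\cdot}$ together with boundedness finishes this term. For the second, $Z_n$ and $\sigma_n N$ are both centered Gaussians; their characteristic function difference equals $e^{-t^2\sigma_n^2/2}(e^{-t^2(\widehat{\sigma}_n^2-\sigma_n^2)/2}-1)$, which vanishes by Lemma \ref{variance-big-block-lemma} exactly because $\widehat{\sigma}_n^2-\sigma_n^2\to 0$ (the hypothesis \eqref{variance-summability} is verified via Proposition \ref{theta-dependent-covariance-lemma} under A1--A2).

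For the strengthened conclusion, apply Lemma \ref{variance-convergence-lemma}: the assumption that $\frac{1}{n}\sum_{i=1}^n\mathrm{Var}(X_{n,i})$ converges is exactly what is needed to conclude that $\sigma^2=\lim_n\sigma_n^2$ exists. Then $\sqrt{\sigma_n^2}\to\sqrt{\sigma^2}$, so $\mathcal{L}(\sigma_n N)\Rightarrow N(0,\sigma^2)$, and combining with the weakly approaching statement via Proposition \ref{weakly-approaching-gaussian} (the mean and variance match in the limit) upgrades the conclusion to ordinary weak convergence. The main obstacle is the careful calibration of $q_n,r_n$: one must simultaneously (i) make $n^{1/2}r_n^{-\theta}\to 0$, which requires $r_n\gg n^{1/(2\theta)}$, (ii) make $q_n^{\delta/2}/n^{\delta/2}\to 0$, which requires $q_n\ll n$, and (iii) satisfy the preconditions of Proposition \ref{2-plus-delta-moment-bound} and Lemma \ref{variance-big-block-lemma}; the hypothesis $\theta>4+2/\zeta$ (together with choosing $\delta\in(0,\zeta\wedge 1)$ appropriately) is precisely what makes all three compatible.
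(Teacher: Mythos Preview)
Your proof is correct and follows essentially the same approach as the paper: block with $q_n=\lfloor n^a\rfloor$, $r_n=\lfloor n^b\rfloor$, control the main term via the Dependent Lindeberg Lemma (bounding $T(m_n)$ by $|t|\sqrt{n}\,\theta_{r_n}$ and $A(m_n)$ by $(q_n/n)^{\delta/2}$ via Proposition~\ref{2-plus-delta-moment-bound}), and handle the two auxiliary terms with Lemma~\ref{variance-big-block-lemma} and Proposition~\ref{weakly-approaching-gaussian}. One small slip: you write ``choosing $b$ close enough to $0$,'' but as you yourself note in the calibration paragraph, the constraint $n^{1/2}r_n^{-\theta}\to 0$ forces $b>1/(2\theta)$, so $b$ must be bounded \emph{away} from zero; any $b\in(1/(2\theta),a)$ with $a<1$ works, and such a choice exists since $\theta>4$.
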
 
\begin{remark}
Central limit theorems are typically stated in alternative form, where the goal is to show that $S_n/\sigma_n \rightsquigarrow N(0,1)$. While this framework has its advantages, we would like to note that, with weakly dependent processes, it is possible that $\sigma_n \rightarrow 0$. The weakly approaching version is able to handle these degenerate cases without needing to impose some condition that implies $\sigma_n^2 >0$, or in the multivariate case, that $\Sigma_n$ is invertible.
\end{remark}

\begin{proof}
As discussed in the introduction of Appendix \ref{dependent-lindeberg-method}, we will consider a variant of the Lindeberg method with blocking where the convergence-determining class $\mathcal{F}$ corresponds to the characteristic function.  We will further assume rates for blocking. Let $q_n = \lfloor n^a \rfloor$ and $r_n= \lfloor n^b \rfloor$ where $a >b$. 



  We will start by bounding the first auxiliary term. Notice that a first-order Taylor expansion yields:
\begin{align}
\begin{split}
\mathbb{E}\left[f(S_n^X) - f(U_n) \right] &\leq \frac{||f^{\prime \prime}||_\infty}{2} \ \mathbb{E}| S_n^X - U_n|^2
\end{split}
\end{align}
Even under possible non-stationarity, the bound for the first term will be analogous to the stationary case, but we will provide it for completeness.  

Notice that $\mathbb{E}| S_n^X - U_n|^2 = \text{Var}(V_n)$. Since $\#(J_1,\ldots, J_m) \leq (m+1)r +q$, we have that:
\begin{align}
\text{Var}(V_n) &\leq \frac{ 2(m+1)r +q}{n} \cdot c^\prime \rightarrow 0
\end{align}

Now we will bound the other auxiliary term. Since $\sigma_n^2 \leq c^\prime$ and  $\mathbb{E}[N] = 0$, it follows that $\{ \sigma_n N \}_{n \in \mathbb{N}}$ is a tight sequence of Gaussian random variables.  By Proposition \ref{weakly-approaching-gaussian}, it follows that this auxiliary term converges to $0$ if $\sigma_n^2 - \widehat{\sigma}_n^2 \rightarrow 0$. Since $c^\prime < \infty$, by Lemma \ref{variance-big-block-lemma}, we indeed have $|\sigma_n^2 - \widehat{\sigma}_n^2| \rightarrow 0$. Therefore, this auxiliary term also converges to $0$.     

Finally, we will bound the main term with the Dependent Lindeberg Method.  By Proposition \ref{dependent-lindeberg-lemma}, it is sufficient to show that:
 \begin{align}
 T(m) = \sum_{j=1}^m | \text{Cov}(e^{it(S_1 + \ldots  + S_{j-1})/\sqrt{n}}, e^{it(S_j)/\sqrt{n}}) | \rightarrow 0 \  \text{ and } \  A(m) \rightarrow 0
 \end{align}
Analogous to \citet{weak-dependence}, notice that $|| e^{it(S_j)/ \sqrt{n}}||_L \leq |t|/{\sqrt{n}}$ and that $||e^{itz}||_\infty = 1$.  Therefore, it follows that:   
\begin{align}
 T(m) &\leq m q \frac{|t|}{\sqrt{n}} \theta_X(r)
 \leq |t| \ n^{1/2- b \theta} 
 \end{align}

 To bound $|A(m)|$, notice that for $n$ large enough, the $2+\delta$-moment bound derived in Proposition \ref{2-plus-delta-moment-bound} implies that:   
 \begin{align}
 \begin{split}
|A(m)| &= n^{-1-\delta/2 } \sum_{l=1}^m \mathbb{E}|S_l|^{2+\delta}
\\ &\leq  n^{-1-\delta/2 } \  m_n \ \left(C \sqrt{q_n}\right)^{2+\delta}
 \\ & \preceq m_n \left(\frac{q_n}{n} \right)^{1+\delta/2}
 \\ & \preceq n^{(a-1)\delta/2}
\end{split}
 \end{align}
We see that both terms converge to $0$ if $b \cdot \theta > 1/2$ and for any $0 < a < 1$.  We can always choose such $b$ since $\theta > 4$.  The weakly approaching claim follows.  Weak convergence under the additional assumption of $\frac{1}{n} \sum_{i=1}^n \text{Var}(X_i)$ existing follows directly from Lemma \ref{variance-convergence-lemma}.  
\end{proof}

Below we state a multivariate version of the non-stationary Central Limit Theorem under $\theta$-dependence.  
\begin{theorem}[Non-stationary Central Limit Theorem for $\theta$-dependent sequences, Multivariate Version] 
\label{multivariate-clt}
 Let $\{X_{n,i} \ 1 \leq i \leq n \}$ be a centered $\mathbb{R}^d$-valued triangular array satisfying the moment condition:
\begin{align}
\sup_{n,i} \max_{1 \leq j \leq d} \mathbb{E}|X_{n,ij}|^{2 + \zeta} \leq \mu
\end{align}   
for some $\zeta >0$. Suppose that $\theta_r(X_1^n) = O(r^{-\theta})$ as $n \rightarrow \infty$ where $\theta > 4 + \frac{2}{\zeta}$. Then,   
\begin{align}
\frac{1}{\sqrt{n}} \sum_{i=1}^n X_{n,i} {\overset{wa} \iff }  N(0, \Sigma_n) 
\end{align}   
where $\Sigma_n = \text{Var}(\frac{1}{\sqrt{n}} \sum_{i=1}^n X_{n,i})$. Furthermore, if $\lim_{n \rightarrow \infty} \frac{1}{n} \sum_{i=1}^n \text{Var}(X_{n,i})$ exists, then $ \Sigma = \lim_{n \rightarrow \infty} \Sigma_n$ exists, and we have that:
\begin{align}
\frac{1}{\sqrt{n}} \sum_{i=1}^n X_{n,i} \rightsquigarrow N(0, \Sigma)
\end{align} 
\end{theorem}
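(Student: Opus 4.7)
The plan is to reduce the multivariate statement to the univariate Theorem \ref{non-stationary-clt} via the Cram\`er--Wold device. More precisely, by Proposition \ref{weakly-approaching-cramer-wold}, it suffices to show that for every $\alpha \in \mathbb{R}^d$,
\begin{align*}
\mathcal{L}\Bigl(\frac{1}{\sqrt{n}}\sum_{i=1}^{n} \alpha^T X_{n,i}\Bigr) \overset{wa}{\iff} \mathcal{L}\bigl(\alpha^T Z_n\bigr),
\end{align*}
where $Z_n \sim N(0,\Sigma_n)$, provided the target sequence is tight. Tightness of $\{\alpha^T Z_n\}$ is immediate: by the multivariate covariance inequality (Proposition \ref{theta-dependent-covariance-lemma}), the assumptions A1--A2 guarantee the summability constant $c'$ introduced in Lemma \ref{multivariate-big-block-lemma} is finite, and consequently $\sup_n \|\Sigma_n\|_\infty < \infty$, so $\mathrm{Var}(\alpha^T Z_n) = \alpha^T \Sigma_n \alpha$ is bounded uniformly in $n$.

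Next I would verify that the scalar triangular array $\{\alpha^T X_{n,i}\}_{1 \le i \le n}$ satisfies the hypotheses of Theorem \ref{non-stationary-clt}. The moment bound is routine: by Jensen's inequality,
\begin{align*}
\sup_{n,i} \mathbb{E}|\alpha^T X_{n,i}|^{2+\zeta} \le \|\alpha\|_\infty^{2+\zeta}\, d^{1+\zeta}\, \mu,
\end{align*}
so A1 is preserved with a modified constant. For A2 I would invoke Proposition \ref{theta-preservation} applied to the linear map $h(x) = \alpha^T x$ (which is globally Lipschitz, has $h(0)=0$, and fits the hypothesis of Proposition \ref{theta-preservation} with $a=1$); this gives $\theta_r((\alpha^T X_{n,i})_i) \le C_\alpha\, \theta_r(X_1^n)^{(m-1)/(m-1)} = C_\alpha\, \theta_r(X_1^n)$, so the polynomial decay rate $O(r^{-\theta})$ with $\theta > 4 + 2/\zeta$ is inherited. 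Applying the univariate theorem then yields $\frac{1}{\sqrt{n}}\sum_i \alpha^T X_{n,i} \overset{wa}{\iff} N(0, \sigma_n^2(\alpha))$, where $\sigma_n^2(\alpha) = \mathrm{Var}(\frac{1}{\sqrt{n}}\sum_i \alpha^T X_{n,i}) = \alpha^T \Sigma_n \alpha$. This matches $\mathcal{L}(\alpha^T Z_n)$ exactly, so the Cram\`er--Wold device delivers the first claim.

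For the second claim I would use the polarization identity $2\,(\Sigma_n)_{jk} = \mathrm{Var}(\frac{1}{\sqrt n}\sum_i (X_{n,ij}+X_{n,ik})) - \mathrm{Var}(\frac{1}{\sqrt n}\sum_i X_{n,ij}) - \mathrm{Var}(\frac{1}{\sqrt n}\sum_i X_{n,ik})$ together with Lemma \ref{variance-convergence-lemma} applied coordinatewise: the Ces\`aro-limit hypothesis on $\frac{1}{n}\sum_i \mathrm{Var}(X_{n,i})$ (interpreted as a covariance matrix and hence entrywise) transfers to the sums and differences, giving existence of $\Sigma = \lim \Sigma_n$ in $\|\cdot\|_\infty$. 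Once $\Sigma_n \to \Sigma$, Proposition \ref{gaussian-characteristic-difference-multivariate} shows $\mathcal{L}(Z_n) \overset{wa}{\iff} N(0,\Sigma)$, and by transitivity of weakly approaching sequences (combined with the fact that $N(0,\Sigma)$ is a fixed law) the weakly approaching statement upgrades to weak convergence $\frac{1}{\sqrt n}\sum X_{n,i} \rightsquigarrow N(0,\Sigma)$.

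The only nontrivial technical point is the $\theta$-preservation step for linear functionals of the vector process: one must check that the hypotheses of Proposition \ref{theta-preservation} apply with $a=1$, in particular the uniform moment bound for the transformed coordinates, and that the resulting rate $r^{-\theta}$ is unchanged. Once that bookkeeping is done, every remaining ingredient is an off-the-shelf application of results already established earlier in the appendix, so I expect no substantive obstacle beyond careful invocation of the univariate theorem and the multivariate variance-approximation lemma.
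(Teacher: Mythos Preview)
Your proposal is correct and follows essentially the same route as the paper: reduce to the univariate theorem via the weakly approaching Cram\`er--Wold device, verify the moment and $\theta$-coefficient hypotheses for the scalar process $\alpha^T X_{n,i}$, identify the resulting variance as $\alpha^T\Sigma_n\alpha$, and then handle the second claim coordinatewise through Lemma~\ref{variance-convergence-lemma}. The only cosmetic differences are that the paper is terser about the $\theta$-preservation step (it simply notes $h(x)=\alpha^Tx$ is Lipschitz rather than formally invoking Proposition~\ref{theta-preservation}) and does not spell out the polarization identity for the off-diagonal entries of $\Sigma_n$.
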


\begin{proof}
By  the weakly approaching  Cram\`{e}r-Wold device (Proposition \ref{weakly-approaching-cramer-wold}), it is sufficient to  establish tightness of the sequence $\Sigma_n^{1/2} N$ and then show $\lambda^T S_n^X {\overset{wa} \iff } \lambda^T \Sigma_n^{\frac{1}{2}} N$ for all $\lambda \in \mathbb{R}^d$. Notice that $\Sigma_{n,ii} \leq c^\prime$ for all $i \in \{1, \ldots, d \}$; therefore the sequence is tight.  
 Now, we may express the $\lambda^T S_n^X$ as:
\begin{align}
\begin{split}
\frac{1}{\sqrt{n}} \sum_{j=1}^d \lambda_j \sum_{i=1}^n X_{ij} &= \frac{1}{\sqrt{n}} \sum_{i=1}^n \sum_{j =1}^d \lambda_j X_{ij} 
\\ & \equiv \frac{1}{\sqrt{n}} \sum_{i=1}^n h(X_i)
\end{split} 
\end{align}
where $||h||_L = \sum_{j=1}^d \lambda_j$.  Let $U_i = h(X_i)$.  Then, we have that $\theta_{r}(X_1^n) = O(r^{-\theta})$ and certainly $ \sup_{i \in \mathbb{N}} E|\lambda^T X_i|^{2+\zeta} \leq  d^{2+\zeta}\sup_{i \in \mathbb{N}} \sum_{j=1}^d |\lambda_j|^{2 + \zeta} E|X_{ij}|^{2+\zeta} < \infty$ and we may apply the univariate central limit theorem to $U_i$. Now, notice that:
\begin{align}
\begin{split}
\text{Var}(\lambda^T S_n^X) &= \frac{1}{n} \sum_{i =1 }^n \sum_{j= 1}^n \text{Cov}\left(\sum_{k=1}^d \lambda_k X_{ik}, \sum_{l=1}^d\lambda_l X_{jl} \right)
\\ &= \sum_{k,l} \lambda_k \lambda_l \left(\frac{1}{n} \sum_{i =1 }^n \sum_{j= 1}^n  \text{Cov}( X_{ik}, X_{jl})  \right) 
\end{split} 
\end{align}
Now, by definition, the variance of the normalized sum is equal to $\lambda^T \Sigma_n \lambda$; the first part of the claim follows.  Analogous to the univariate case, the second claim follows from Lemma \ref{variance-convergence-lemma}, where entry-wise Ces\`{a}ro convergence of the elements of the $\Sigma_n$ ensures convergence.  
\end{proof}

\bibliographystyle{apalike}
\bibliography{ms.bbl}

\end{document}